\numberwithin{equation}{section} 
\theoremstyle{plain}
\def\CC{\mathbb{C}}
\def\ZZ{\mathbb{Z}}
\def\B{{\rm B}}
\def\C{{\rm C}}
\def\D{{\rm D}}
\def\E{{\rm E}}
\def\F{{\rm F}}
\def\G{{\rm G}}
\def\H{{\rm H}}
\def\J{{\rm J}}
\def\K{{\rm K}}
\def\L{{\rm L}}
\def\M{{\rm M}}
\def\N{{\rm N}}
\def\P{{\rm P}}
\def\Q{{\rm Q}}
\def\R{{\rm R}}
\def\SS{{\rm S}}
\def\T{{\rm T}}
\def\U{{\rm U}}
\def\V{{\rm V}}
\def\W{{\rm W}}
\def\X{{\rm X}}
\def\Y{{\rm Y}}
\def\Z{{\rm Z}}
\def\Hh{\EuScript{H}}
\def\Oo{\EuScript{O}}
\def\Vv{\mathscr{V}}
\def\a{\alpha} 
\def\b{\beta}
\def\d{\delta}
\def\g{\gamma}
\def\h{\varphi}
\def\k{\kappa}
\def\l{\lambda}
\def\n{\eta}
\def\p{\mathfrak{p}}
\def\s{\sigma}
\def\t{\theta}
\def\w{\varpi}
\def\ie{c'est-à-dire }
\def\>{\geqslant}
\def\<{\leqslant}
\def\Hom{{\rm Hom}}
\def\End{{\rm End}}
\def\Mat{{\rm M}}
\def\GL{{\rm GL}}
\def\Gal{{\rm Gal}}
\def\Ker{{\rm Ker}}
\def\Ind{{\rm Ind}}
\def\ind{{\rm ind}}
\def\qlb{{\overline{\mathbb{Q}}_\ell}}
\def\zlb{{\overline{\mathbb{Z}}_\ell}}
\def\flb{{\overline{\mathbb{F}}_{\ell}}}
\def\ip{\boldsymbol{i}}
\def\r{{\textbf{\textsf{r}}}}
\def\i{\boldsymbol{i}}
\def\kk{\boldsymbol{k}}
\def\ll{\kk_{\C}}
\def\bn{{\eta}} 
\def\bk{{\kappa}} 
\def\bl{{\lambda^{0}}} 
\def\bs{\xi^0}
\def\Ext{{\rm Ext}}
\def\NF{{\sf H}}
\def\Irr{{\rm Irr}} 
\def\Mod{\boldsymbol{{\sf Mod}}}
\def\Rep{\boldsymbol{{\sf Rep}}}
\def\rep{\boldsymbol{{\sf rep}}}
\def\rl{\boldsymbol{{\sf r}}_{\ell}}
\def\tP{\widetilde{\P}}
\def\MM{\boldsymbol{{\sf M}}}
\def\GG{\boldsymbol{{\sf F}}}
\def\GGG{\boldsymbol{{\sf G}}}
\def\GB{{\sf G}}
\def\SB{{\sf S}}
\def\PB{{\sf P}}
\def\TB{{\sf T}}
\def\MB{{\sf M}}
\def\aa{{\sf a}}
\def\nn{{\sf t}}
\def\tt{{\sf t}}
\def\uu{{\sf u}}
\def\vv{a}
\def\mp{\zeta}
\def\xp{\d}
\def\scusp{{\rm scusp}}
\long\def\MSC#1\EndMSC{\def\arg{#1}\ifx\arg\empty\relax\else
     {\par\narrower\noindent%
     2010 Mathematics Subject Classification: #1\par}\fi}
\long\def\KEY#1\EndKEY{\def\arg{#1}\ifx\arg\empty\relax\else
	{\par\narrower\noindent Keywords and Phrases: #1\par}\fi}
\title[Blocs de représentations $\ell$-modulaires de longueur finie]
{Décomposition en blocs de la catégorie des représentations
$\ell$-modulaires lisses de longueur finie de $\GL_m(\D)$} 
\author{Bastien Drevon}
\address{
Laboratoire de Mathémati\-ques de Versailles, 
UVSQ, 
CNRS, 
Université Paris-Saclay,
78035 Versailles, France}
\email{bast139@hotmail.fr}
\author{Vincent Sécherre}
\address{
Laboratoire de Mathémati\-ques de Versailles, 
UVSQ, 
CNRS, 
Université Paris-Saclay,
78035 Versailles, France, Institut Universitaire de France}
\email{vincent.secherre@uvsq.fr}
\begin{abstract}
Soit $\F$ un corps localement compact non archimédien 
de caractéristique résiduelle~$p$,
soit $\G$ une forme intérieure de $\GL_n(\F)$ avec $n\>1$,
et soit $\ell$ un nombre premier différent de $p$.~Nous 
décrivons la décomposition en blocs de la catégorie des 
représentations lisses et de longueur finie de $\G$ à coefficients
dans $\flb$.
Contrairement au cas des représentations complexes d'un 
groupe~réductif $p$-adique quelconque 
et au cas des représentations $\ell$-modulaires de $\GL_n(\F)$,
à chaque bloc de cette décomposition
correspond non pas un unique support supercuspidal,
mais une réunion finie de tels supports,
que nous décrivons. 
Nous prouvons également qu'un bloc supercuspidal est équivalent~au 
bloc principal (\ie le bloc contenant le caractère trivial)
du groupe multiplicatif~d'une~algè\-bre à division convenable,
et nous déterminons les représentations~ir\-réductibles ayant une
extension non scindée avec une représentation supercuspidale de $\G$
donnée. 
\end{abstract}
\begin{document} 

\maketitle

\MSC 22E50 
\EndMSC
\KEY 
Bloc, Extension, Longueur, Réduction mod $\ell$,
Représentation supercuspidale, Type
\EndKEY

\thispagestyle{empty}

\section{Introduction}

\subsection{}

Soit $\F$ un corps commutatif localement compact non archimédien de 
caractéristique résiduelle $p$,
et soit $\G$ le groupe des points rationnels d'un groupe algébrique réductif
connexe sur $\F$.
C'est un groupe localement compact et totalement discontinu.
On s'intéresse aux représentations~lis\-ses de $\G$ sur des espaces vectoriels 
complexes et aux opérateurs d'entrelacement entre~ces~repré\-sen\-tations,
qui forment une catégorie abélienne notée $\Rep_\CC(\G)$.
Dans \cite{Bernstein},
Bernstein a montré~com\-ment cette catégorie se décompose en un produit de
blocs, \ie de facteurs directs~in\-dé\-com\-posables,
chacun correspondant bijectivement à une classe inertielle de paires
cuspidales de $\G$.
Au coeur de ce résultat,
il y a le fait qu'une représentation irréductible
cuspidale complexe~de $\G$ est projective modulo le centre,
\ie projective dans la sous-catégorie pleine des repré\-sentations
de $\G$ ayant un caractère central fixé.
La sous-catégorie pleine $\rep_\CC(\G)$ formée des~re\-présentations de
longueur finie 
se décompose elle aussi en une somme directe de blocs,
chacun correspondant cette fois-ci à une \textit{unique} classe de
$\G$-conjugaison de paires cuspidales,
comme~ex\-pliqué par Casselman dans \cite{Casselman} 7.3.

\subsection{}

Remplaçons maintenant le corps $\CC$ des nombres complexes par une clôture
algébrique $\flb$ d'un corps fini de caractéristique un nombre premier $\ell$
différent de $p$.
Les représentations lisses de $\G$ sur des $\flb$-espaces vectoriels,
dites $\ell$-\textit{modulaires},
sont étudiées depuis les travaux fondateurs de Vignéras \cite{Vigb},
dans l'objectif d'étudier les phénomènes de congruence entre formes
automorphes, ainsi que les propriétés de congruence des phénomènes de 
réciprocité et de fonctorialité de~Lang\-lands locales. 
Si la théorie des représentations $\ell$-modulaires des groupes réductifs
$p$-adiques~res\-semble à la théorie complexe sur certains points,
du fait que $p$ est inversible dans $\flb$,
une~dif\-fé\-ren\-ce essentielle est l'existence,
dans le cas modulaire,
de représentations cuspidales non~super\-cuspidales,
\ie apparaissant non comme quotients mais comme sous-quotients d'induites
paraboliques propres
(on renvoie au paragraphe \ref{notip} ci-dessous pour les définitions de
\textit{cuspidal} et \textit{supercuspidal}).
Par conséquent,
une représentation cuspidale n'est en général pas projective~mo\-dulo
le centre dans le cas modulaire.
S'ajoute à ceci le phénomène récemment observé
(\cite{dudassc,datsc}) de non-unicité du support supercuspidal~:
une $\flb$-représentation irréductible d'un groupe réductif $p$-adique
peut apparaître comme sous-quotient d'induites paraboliques
de paires supercuspidales non conjuguées. 
Aussi l'approche de Bernstein devient-elle inopérante dans le cas modulaire. 

\subsection{}
\label{decvsss}

Supposons maintenant que $\G$ soit une forme intérieure du groupe
linéaire $\GL_n(\F)$, avec $n\>1$.
C'est un groupe de la forme $\GL_m(\D)$,
où $\D$ est une algèbre à division centrale de dimension~$d^2$ sur $\F$, 
et où $m$ est un diviseur de $n$ tel que $md=n$. Pour un tel grou\-pe,
on dispose de l'arsenal technique de la théorie des types de
Bushnell et Kutzko développée dans \cite{BK},
\cite{Broussous}, \cite{Grabitz}, \cite{VS1},\dots,
\cite{SeSt2} et adaptée au cas $\ell$-modulaire dans \cite{MSt},
permettant d'étudier en détail ses représenta\-tions $\ell$-modulaires.  
On peut attacher à toute représentation irréductible de $\G$
un unique support~su\-percuspidal
(voir le paragraphe \ref{defscusp}),
et montrer que la catégorie abélienne~$\Rep_\flb(\G)$
des ~re\-pré\-sen\-ta\-tions
lis\-ses de $\G$ à coefficients dans $\flb$
se décompose en un produit de blocs
$\Rep_\flb(\G,\Omega)$, cha\-cun
cor\-res\-pon\-dant bijectivement à une classe d'inertie 
$\Omega$ de pai\-res supercuspidales
(théorème \ref{decbvss}).
Se pose ensuite la question de la décomposition de la
sous-catégorie pleine $\rep_\flb(\G)$,~for\-mée~des repré\-sentations
de longueur finie~: nous y répondons dans le présent article. 

\subsection{}
\label{par14}

Pour décomposer $\rep_\flb(\G)$,
nous nous appuyons sur le résultat suivant (lemme \ref{vanessa}).

\begin{lemm}
Soit $\SS$ une partie de l'ensemble $\X$ des classes d'isomorphisme
de $\flb$-repré\-sen\-tations irréductibles~de $\G$ telle que, 
pour toutes représentations $\tau\in\SS$ et $\pi\in\X-\SS$,
le premier espace d'extension $\Ext^1_\G(\tau,\pi)$ soit nul.
Alors la catégorie
$\rep_\flb(\G)$ se décompose en la somme~di\-rec\-te de $\rep_\flb(\G,\SS)$, 
la sous-catégorie pleine des re\-pré\-sentations dont les
sous-quotients~ir\-ré\-duc\-tibles sont dans $\SS$,
et de $\rep_\flb(\G, \X-\SS)$.
\end{lemm}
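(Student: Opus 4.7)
The plan is the classical dévissage argument for block decompositions. The first step is to promote the vanishing of $\Ext^1$ from pairs of irreducibles to pairs of finite length representations: if $A$ has all its irreducible subquotients in $\SS$ and $B$ all its irreducible subquotients in $\X-\SS$, then I would show that $\Ext^1_\G(A, B) = 0$. This follows by a routine double induction on the lengths of $A$ and $B$, using the long exact sequence of $\Ext^\ast$ attached to any short exact subsequence in either argument: the two flanking $\Ext^1$ terms vanish by induction, forcing the middle one to vanish as well.

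The second step is to show, by induction on the length of $\pi$, that every $\pi$ in $\rep_\flb(\G)$ admits a decomposition $\pi = A \oplus B$ with $A \in \rep_\flb(\G, \SS)$ and $B \in \rep_\flb(\G, \X-\SS)$. The cases of length $\< 1$ are immediate. For the inductive step, I would pick an irreducible quotient $\tau$ of $\pi$, write the short exact sequence $0 \to \pi' \to \pi \to \tau \to 0$, and decompose $\pi' = \pi'_\SS \oplus \pi'_{\X-\SS}$ by induction. Assuming $\tau \in \SS$, the quotient $\pi/\pi'_{\X-\SS}$ is an extension of $\tau$ by $\pi'_\SS$, hence belongs to $\rep_\flb(\G, \SS)$; the sequence
\[
0 \to \pi'_{\X-\SS} \to \pi \to \pi/\pi'_{\X-\SS} \to 0
\]
has its $\Ext^1$ vanishing by the first step and therefore splits, yielding the desired decomposition. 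The case $\tau \in \X-\SS$ is handled symmetrically.

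To conclude, the vanishing of $\Hom_\G$ between the two subcategories is immediate, since the image of any morphism would have irreducible subquotients in $\SS \cap (\X-\SS) = \emptyset$. Combined with the splitting on objects, this gives both the uniqueness of the decomposition and the direct sum decomposition $\rep_\flb(\G) = \rep_\flb(\G, \SS) \oplus \rep_\flb(\G, \X-\SS)$. The only point requiring care is the $\Ext$ dévissage: to handle both parities of the inductive step one needs $\Ext^1$ vanishing in both directions between $\SS$ and $\X-\SS$, which is presumably to be read symmetrically into the hypothesis (or recovered from a duality argument specific to smooth $\flb$-representations of $\G$).
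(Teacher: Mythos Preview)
Your approach is essentially the paper's: both extend the $\Ext^1$ vanishing from irreducibles to finite-length objects by d\'evissage, then use it to force the splitting. The paper phrases the second step via maximal subobjects---take $\X$ the largest subrepresentation of $\V$ with all subquotients in $\SS$, take $\Y$ the analogous one for $\X-\SS$, and derive a contradiction from a hypothetical irreducible sub of $\V/(\X\oplus\Y)$---rather than your induction on length with an irreducible quotient, but the content is identical.

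You are right to flag the symmetry issue, and the paper glosses over the very same point: when the chosen irreducible lies in $\X-\SS$, it writes ``le cas contraire se traitant de la m\^eme fa\c{c}on'', but that branch would need $\Ext^1_\G(\tau,\sigma)=0$ for $\tau\in\X-\SS$ and $\sigma\in\SS$, which is not the stated hypothesis. In a general abelian category the one-sided hypothesis does not suffice (representations of the quiver $\bullet\to\bullet$ give a two-term counterexample). In the paper's actual use of the lemma, however, the relevant $\Ext^1$ vanishing is supplied by a proposition whose conclusion is symmetric in the two irreducibles (since $\B(\pi)=\B(\pi')$ is a symmetric relation), so the two-sided hypothesis is in fact available and both your argument and the paper's go through. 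Your instinct to read the hypothesis symmetrically is therefore exactly what is needed.
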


Pour que $\Ext^1_\G(\tau,\pi)$ soit non nul,
il faut et suffit qu'il existe une représentation indécompo\-sa\-ble
de $\G$ de longueur $2$ dont l'unique sous-représentation irréductible soit
$\pi$ et l'unique quotient irréductible soit $\tau$.
\'Etant donné une représentation~irré\-duc\-ti\-ble $\pi$ de $\G$,
nous cherchons donc à quelle condition une représentation 
$\tau\in\X$ a une extension non scindée avec $\pi$, 
et plus généralement à quelle condition $\pi$ et $\tau$ sont des constituants
irréductibles d'une représentation~indé\-com\-po\-sa\-ble
de $\rep_\flb(\G)$.
Pour cela, 
la décomposition en blocs de $\Rep_\flb(\G)$ donnée
au~paragra\-phe~\ref{decvsss}
assure qu'on peut~se~ramener au cas où $\pi$ et $\tau$ ont des supports
supercuspidaux iner\-tiel\-le\-ment équivalents.

\subsection{}
\label{defequi0}

Notre stratégie repose partiellement sur la notion de réduction modulo $\ell$. 
Expliquons de~quoi il s'agit.
Considérons les représentations~irré\-ductibles de $\G$ à coefficients dans
une clôture~algé\-bri\-que $\qlb$ du corps des nombres~$\ell$-adi\-ques.~Une
telle représentation est dite \textit{entière} si elle admet
un $\zlb$-réseau stable par $\G$,
où $\zlb$ est~l'anneau des entiers de $\qlb$.
Tensoriser un tel réseau par~$\flb$ fournit une re\-pré\-sentation
lisse $\ell$-modulaire~de longueur finie,
dont la semi-simplification ne~dé\-pend pas du réseau choisi~:
on appelle celle-ci~la \textit{réduction mod $\ell$} de la
$\qlb$-représentation entière considérée.
Par exemple,
il y a un critère~sim\-ple pour savoir si une
$\qlb$-représentation cuspidale $\mu$ de $\G$
est entière~:
il faut et il suf\-fit que son ca\-ractère central soit lui-même entier,
\ie à valeurs dans $\overline{\ZZ}{}^\times_\ell$.
Si tel est le cas,~il
y a une $\flb$-représentation irréductible cuspidale $\pi$ de $\G$
telle que la réduction mod $\ell$ de $\mu$ soit~:
\begin{equation}
\label{labelrhointro}
\pi \oplus \pi\nu \oplus \dots \oplus \pi\nu^{a-1}
\end{equation}
où $\nu$ désigne le caractère ``valeur absolue de la norme réduite'' de $\G$ 
et $a=a(\mu)\>1$ la longueur de~cette réduction mod $\ell$.
En outre,
l'ensemble des facteurs irréductibles de \eqref{labelrhointro} est soit réduit 
à $\pi$,
soit formé de tous les $\pi\nu^j$, $j\in\ZZ$,
et on se trouve dans l'un ou l'autre cas selon que $\ell$ divise ou non
l'entier $q(\pi)-1$,
où $q(\pi)$ est une certaine puissance du cardinal $q$ du corps résiduel de 
$\F$ associée à $\pi$ au paragraphe \ref{flapflip}.
Dans le cas où $\pi$ est de niveau $0$,
elle vaut simplement $q^n$.

Inversement,
si $\pi$ est une $\flb$-représentation \textit{supercuspidale} 
(et pas seulement cuspidale) de $\G$,
l'ensemble des représentations de $\G$
apparaissant avec $\pi$
dans la réduction mod $\ell$ d'une même $\qlb$-re\-pré\-sentation 
cuspidale entière est soit réduit à $\pi$ (si $\ell$ ne divise pas $q(\pi)-1$),
soit formé des représentations $\pi\nu^j$, $j\in\ZZ$
(si $\ell$ divise $q(\pi)-1$).

Observons que,
si $\kk$ est un corps fini de caractéristique $p$,
il existe un phénomène comparable mais plus simple 
dans le cas du groupe linéaire général $\GL_m(\kk)$~: 
une $\qlb$-représentation~irréduc\-tible de $\GL_m(\kk)$
dont la réduction mod $\ell$ contient une $\flb$-représentation
supercuspidale
est~elle-même
cuspidale et sa réduction mod $\ell$ est irréductible.

\subsection{}
\label{defequi}

Disons plus généralement
que des représentations irréductibles $\ell$-modulaires de $\G$
sont~\textit{équi\-valentes} s'il existe une $\qlb$-représentation irréductible
entière de $\G$ dont la réduction mod $\ell$ con\-tienne chacune de
ces~re\-présentations.
Il n'est pas évident \textit{a priori} qu'il s'agisse d'une relation 
d'équi\-valence,
mais on peut montrer le résultat suivant (proposition \ref{blocindec}).

\begin{prop}
\label{lapintro}
Soit $\pi$ une représentation irréduc\-tible de $\G$
dont le support supercuspidal est $\rho_1+\dots+\rho_r$.
Alors une représentation irréductible $\pi'$ est équivalente à $\pi$ si et seulement
s'il y a des entiers $j_1,\dots,j_r\in\ZZ$ tels que le support 
supercuspidal de $\pi'$ soit
$\rho_1\nu^{j_1}+\dots+\rho_r\nu^{j_r}$,
où, pour chaque $k=1,\dots,r$, l'entier $j_k$ est nul 
si $\ell$ ne divise pas $q(\rho_k)-1$.
\end{prop}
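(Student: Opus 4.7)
The plan is to prove both implications separately. The key input is the description recalled in \S1.5 of the reduction modulo $\ell$ of an integral cuspidal $\qlb$-representation $\widetilde{\rho}$ whose reduction contains a supercuspidal $\flb$-representation $\rho$: the reduction is the multiplicity-free sum $\rho \oplus \rho\nu \oplus \dots \oplus \rho\nu^{a-1}$ of supercuspidals, and the full orbit of twists $\{\rho\nu^j : j \in \ZZ\}$ is exhausted as $\widetilde{\rho}$ varies exactly when $\ell \mid q(\rho)-1$ (otherwise the orbit reduces to $\{\rho\}$).

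For the necessity, I would let $\widetilde{\Pi}$ be an integral irreducible $\qlb$-representation whose reduction contains both $\pi$ and $\pi'$, and $\widetilde{\rho}_1+\dots+\widetilde{\rho}_s$ be its supercuspidal support. Exactness of $r_\ell$ and its compatibility with parabolic induction give that $\pi$ is a subquotient of $i_P(\sigma_1 \otimes \dots \otimes \sigma_s)$ for some cuspidal constituents $\sigma_k$ of $r_\ell(\widetilde{\rho}_k)$, and the supercuspidal support of $\pi$ decomposes as the concatenation of the supercuspidal supports of the $\sigma_k$. By the description recalled above, the supercuspidal supports of the various constituents of $r_\ell(\widetilde{\rho}_k)$ differ from one another only by a common unramified twist $\nu^{i_k}$. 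Repeating the argument for $\pi'$ with twists $i'_k$ and comparing, I would read off that the supercuspidal support of $\pi'$ has the form $\rho_1\nu^{j_1}+\dots+\rho_r\nu^{j_r}$; the constraint $j_k=0$ whenever $\ell \nmid q(\rho_k)-1$ then follows from the orbit of $\rho_k$ being reduced to $\{\rho_k\}$ in that case.

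For the sufficiency, I would proceed constructively. For each $k$, I would choose an integral cuspidal $\qlb$-lift $\widetilde{\rho}_k$ of $\rho_k$ whose reduction contains both $\rho_k$ and $\rho_k\nu^{j_k}$; this is automatic when $j_k=0$, and otherwise it follows from the orbit description recalled in the first paragraph. The parabolic induction $I = i_P(\widetilde{\rho}_1 \otimes \dots \otimes \widetilde{\rho}_r)$ is integral, and by the same compatibility $r_\ell(I)$ contains $\pi$ and $\pi'$ as subquotients.

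The hard part will be to extract from $I$ a single irreducible constituent $\widetilde{\Pi}$ whose reduction contains both $\pi$ and $\pi'$ simultaneously: a priori $I$ can be reducible, and its irreducible $\qlb$-constituents might distribute $\pi$ and $\pi'$ into distinct summands of $r_\ell(I)$. To overcome this I would appeal to the theory of types for $\GL_m(\D)$ developed in \cite{BK} and refined to the $\ell$-modular setting in \cite{MSt}: both $\pi$ and $\pi'$ should be controlled by a common semisimple type whose unique integral $\qlb$-cover produces an irreducible $\widetilde{\Pi}$ of the required form. This is the step where the argument genuinely uses the type-theoretic structure specific to inner forms of $\GL_n(\F)$.
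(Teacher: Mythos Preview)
Your necessity argument is essentially the one in the paper: take the cuspidal support $\pi_1+\dots+\pi_u$ of $\widetilde{\Pi}$, write each $\rl(\pi_k)$ as $\tau_k\oplus\tau_k\nu\oplus\dots\oplus\tau_k\nu^{a_k-1}$, decompose further the supercuspidal support of each $\tau_k$, and compare the resulting expressions for $\scusp(\pi)$ and $\scusp(\pi')$. The constraint $j_k=0$ when $\ell\nmid q(\rho_k)-1$ falls out because $a_k=1$ in that case. This part is fine.

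For sufficiency, you have correctly located the difficulty but your proposed fix is a gap. There is no ``unique integral $\qlb$-cover'' of a semisimple type, and nothing in the type theory of \cite{BK,MSt} hands you an irreducible $\widetilde{\Pi}$ with the prescribed reduction behaviour; the type only pins down an inertial class, not a specific irreducible subquotient of a possibly reducible induction. The paper bypasses the problem entirely by a much more elementary device: instead of trying to extract an irreducible constituent from $I=\pi_1\times\dots\times\pi_r$, it \emph{perturbs} the inducing data. One replaces each $\pi_k$ by $\pi_k\phi_k$ where $\phi_k$ is an unramified $\qlb$-character with values in $1+\mathfrak{m}$ (so $\rl(\phi_k)$ is trivial and the reduction of the induced representation is unchanged). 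For a generic choice of the $\phi_k$ the product $\pi_1\phi_1\times\dots\times\pi_r\phi_r$ is irreducible, by the standard criterion that $\sigma\times\sigma'$ is irreducible for cuspidals $\sigma,\sigma'$ unless $\sigma'\in\{\sigma\nu^{\pm s(\sigma)}\}$ (see \cite{VSU0}); one arranges this inductively on $r$, each step excluding only finitely many values of $\phi_r$ while $1+\mathfrak{m}$ is infinite. The resulting irreducible $\qlb$-representation is then the $\widetilde{\Pi}$ you need.
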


Notons $\B(\pi)$ la classe des représentations irréductibles
équivalentes à une représentation donnée $\pi$, 
et notons $\mathscr{B}$ l'ensemble des classes $\B(\pi)$
lorsque $\pi$ décrit les~re\-pré\-sentations irréductibles 
$\ell$-modu\-laires de $\G$.
(Attention~:
la définition des $\B(\pi)$ que nous donnons au paragraphe \ref{defbepi}
est différente, 
mais équivalente d'après la proposition \ref{lapintro}.)
On a le résultat suivant (théorème \ref{Prop 3.2.4}).

\begin{theo} 
On a une décomposition en blocs~:
\begin{equation*}
\label{decbintro}
\rep_\flb (\G) = \bigoplus\limits_{\B} \rep_\flb (\G,\B)
\end{equation*}
où $\B$ décrit les éléments de $\mathscr{B}$,
et où $\rep_\flb (\G,\B)$ est la sous-catégorie pleine
de $\rep_{\flb}(\G)$ formée des représentations dont tous
les sous-quotients irréductibles sont dans $\B$.  
\end{theo}

Dans le cas où $\G=\GL_n(\F)$, 
des représentations irréductibles de $\G$
sont dans le même bloc si et seulement si elles ont le
même support supercuspidal
(voir les remarques \ref{Bsingleton} et \ref{Bsingleton6}).

\subsection{}

Il n'est pas difficile de montrer
(voir le lemme \ref{lemdep})
que deux représentations irréductibles~équi\-valentes au sens du paragraphe 
\ref{defequi} sont les constituants d'une représentation indécompo\-sa\-ble~de
longueur finie $\G$,
et (voir le lemme \ref{lemmecc})
que les constituants irréductibles
d'une représentation~in\-dé\-composa\-ble~de 
longueur finie $\G$ ont le même caractère central. 
Il ne reste donc qu'à prouver que,
pour toute classe $\B\in\mathscr{B}$, 
l'espace $\Ext^1_\G(\tau,\pi)$ est nul quels que soient
$\pi\in\B$ et $\tau\in\X-\B$.
Nous pouvons supposer,
comme observé au paragraphe \ref{par14},
que $\tau$ et $\pi$ ont des supports~super\-cuspidaux iner\-tiellement
équivalents.  
Nous pro\-cé\-dons~en trois étapes~: 
\begin{enumerate}
\item 
d'abord le cas où $\pi$ est une
représentation cuspidale de niveau $0$ de $\G$, 
\item
puis le cas où $\pi$ est une
représentation cuspidale de niveau quelconque de $\G$,
\item
et enfin le cas général,
\end{enumerate}
que nous traitons respectivement dans les paragraphes \ref{sec4},
\ref{sec5} et \ref{sec6}.
Détaillons chacune de ces trois étapes,
à commencer par la première.

\subsection{}

La première étape repose sur la description de $\pi$ comme l'induite compacte 
d'une représenta\-tion $\xi$ d'un~sous-groupe~$\N$ de $\G$ telle~que~:
\begin{itemize}
\item 
le groupe $\N$ contient et normalise le~sous-groupe compact maximal 
$\GL_m(\Oo_\D)$, où $\Oo_\D$ est l'an\-neau des entiers de $\D$,
\item
la restriction de $\xi$ à $\GL_m(\Oo_\D)$ est l'inflation d'une représentation 
irréducti\-ble~cuspidale $\s$ du groupe fini $\GL_m(\kk_\D)$, 
où $\kk_\D$ est le corps résiduel de $\D$.
\end{itemize}
(Dans le langage de la théorie des types simples de 
Bushnell-Kutzko~\cite{BK,MSt}, 
la paire $(\N,\xi)$ est un type simple maximal étendu de ni\-veau $0$~: 
voir le paragraphe \ref{consnxi3}.)
D'abord
(voir les~paragra\-phes~\ref{consnxi} à \ref{floupifloup}),
nous montrons que la détermination des représentations cuspidales de $\G$
ayant une extension non scindée avec $\pi$ équivaut à la détermination des
caractères non ramifiés $\chi$ de~$\G$ tels que $\xi$ ait une extension
non scindée~avec un conjugué de $\xi\chi$ sous le normalisateur~de~$\N$~dans
$\G$.
Nous ramenons ainsi le problème~ini\-tial à un problème dans la catégorie
$\rep_{\flb}(\N)$ des $\flb$-re\-présentations de~longueur~finie~de
$\N$.

Le groupe $\N$ se comportant presque comme un groupe fini,
nous prouvons ensuite
que $\xi$~ad\-met une enveloppe projective dans $\rep_{\flb}(\N)$,
ce qui permet de ramener~le problème à celui de la description
des $\qlb$-représentations irréductibles $\d$ de $\N$ dont la réduction
mod $\ell$ contient $\xi$ (voir les paragraphes \ref{subsection 3.1.2} à
\ref{par46}).

Pour décrire ces représentations $\d$,
la situation est particulièrement favorable lorsque $\pi$
(ou de façon équivalente $\s$)
est supercuspidale,
grâce au fait que toute $\qlb$-représentation~irréduc\-tible de $\GL_m(\kk_\D)$
dont la réduction mod $\ell$ contient $\s$
est~elle-même cuspidale et sa réduction mod $\ell$ est irréductible~;~les
représentations $\d$ se décrivent alors aisément à 
l'aide des types simples~ma\-xi\-maux de niveau $0$ relevant $\s$.
Si $\pi$
n'est pas supercuspidale,
le fait précédent est à remplacer par~le~fait~que 
toute $\qlb$-re\-pré\-sentation~irréduc\-tible de $\GL_m(\kk_\D)$
dont la réduction mod $\ell$ con\-tient~$\s$
est générique (voir les paragraphes \ref{merlinlenchanteur} à \ref{utherpendragon}).

\subsection{}
\label{par18}

La seconde étape consiste à se ramener au cas précédent,
{au moyen d'une équivalence de~ca\-tégories 
construite par Chinello \cite{gianmarco2,gianmarco3} 
grâce à la théorie des types simples de 
Bushnell-Kutzko~\cite{BK,MSt}.
\'Etant donné une représentation cuspidale~$\pi$ de $\G$,
correspondant à une classe~iner\-tiel\-le $\Omega$, Chinello~: 
\begin{itemize}
\item 
construit un progénérateur de type fini de la catégorie~:
\begin{equation}
\label{endoblocs}
\bigoplus\limits_{\Omega'}\Rep_\flb(\G,\Omega')
\end{equation}
(où $\Omega'$ décrit les classes inertielles équivalentes à $\Omega$
en un sens que nous ne préciserons pas ici), 
\item
et prouve l'existence d'une extension finie $\E$ de $\F$ de degré $k$ divisant 
$n$ et d'une équivalence de catégories~:
\begin{equation*}
\bigoplus\limits_{\Omega'}\Rep_\flb(\G,\Omega')
\smash{\mathop{\longrightarrow}\limits^{\GG}}  
\bigoplus\limits_{\Omega_0}\Rep_\flb(\G_0,\Omega_0)
\end{equation*}
où $\Omega_0$ décrit les clas\-ses inertielles de niveau $0$
d'une certaine forme intérieure $\G_0$ de $\GL_{n/k}(\E)$. 
\end{itemize}
Le foncteur $\GG$ envoie donc 
$\Rep_\flb(\G,\Omega)$ sur $\Rep_\flb(\G_0,\Omega_0)$
pour une certaine classe inertielle~$\Omega_0$.

Il s'agit alors de prouver que $\GG$ envoie $\pi$ sur une
repré\-sentation cuspidale $\pi_0$
(lemme \ref{lemmeheronseiche}),
puis que l'image de~$\B(\pi)$~par $\GG$
est égale à $\B(\pi_0)$,
ce que nous faisons en décrivant le comportement de ce foncteur 
par torsion par un caractère non ramifié
(lemme \ref{adieulescons}).

\subsection{}

Enfin,
nous traitons le cas général dans la section \ref{sec6},
en nous inspirant de \cite{EH} Theorem 3.2.13.
Nous obtenons le résultat suivant
(proposition \ref{Prop 3.2.1}).

\begin{prop}
\label{Prop321intro}
Soient $\pi$ et $\pi'$ des $\flb$-représentations irréductibles de $\G$
telles que l'espace $\Ext^1_\G(\pi',\pi)$ soit non nul.
Alors $\pi$ et $\pi'$ sont équivalentes. 
\end{prop}

Enfin, le résultat suivant (proposition \ref{blocindecfinal})
complète la description de la relation
d'équivalence faite dans la proposition \ref{lapintro}.

\begin{prop}
\label{blocindecfinalintro}
Pour que des représentations~ir\-réductibles de $\G$
soient équivalentes,
il faut et suffit qu'elles apparaissent comme sous-quotients
d'une même représentation indécomposable de longueur finie de $\G$.
\end{prop}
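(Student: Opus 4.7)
The plan is to prove each direction of the equivalence separately, relying on the block decomposition theorem \ref{Prop 3.2.4} for one implication and on the preliminary lemma \ref{lemdep} for the other.

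For the implication ``common indecomposable $\Rightarrow$ equivalent'', the decisive input is Theorem \ref{Prop 3.2.4}. Let $M$ be an indecomposable finite length representation of $\G$ admitting $\pi$ and $\pi'$ as subquotients. The decomposition
\begin{equation*}
\rep_\flb(\G) = \bigoplus_\B \rep_\flb(\G,\B)
\end{equation*}
yields a direct sum decomposition $M = \bigoplus_\B M_\B$ with $M_\B \in \rep_\flb(\G,\B)$. Indecomposability of $M$ forces exactly one summand $M_{\B_0}$ to be nonzero, so every composition factor of $M$ lies in the single class $\B_0$. In particular $\pi$ and $\pi'$ are equivalent.

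For the converse ``equivalent $\Rightarrow$ common indecomposable'', I would invoke Lemma \ref{lemdep} directly: it asserts precisely that two equivalent irreducible representations arise as subquotients of some indecomposable finite length representation of $\G$. Since that lemma has already been proved (and indeed served as one of the main inputs to Theorem \ref{Prop 3.2.4}), nothing further is required for the present proposition.

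The real content is thus concentrated in Lemma \ref{lemdep}, and the main obstacle lies there rather than in the present statement. To prove that lemma one would start from the defining data of equivalence given in paragraph \ref{defequi}: an entire $\qlb$-irreducible representation $\Pi$ whose reduction mod $\ell$ contains both $\pi$ and $\pi'$. Choosing a $\G$-stable $\zlb$-lattice $L$ in $\Pi$, the quotient $L/\ell L$ is a finite length $\flb$-representation whose semisimplification contains $\pi$ and $\pi'$. The subtle point is to guarantee that they belong to the \emph{same} indecomposable summand; this would be handled by exploiting the explicit description of the equivalence in Proposition \ref{lapintro} to reduce to the supercuspidal case $\pi' = \pi\nu^j$, and then constructing a chain $\pi,\pi\nu,\dots,\pi\nu^j$ joined by nontrivial $\Ext^1$'s coming from the projective envelope of $\pi$ built in the first stage of the paper's strategy, finally assembling them by iterated Yoneda composition into a single indecomposable containing both representations.
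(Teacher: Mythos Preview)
Your proof of the proposition itself is correct and is exactly the paper's argument: the implication ``equivalent $\Rightarrow$ common indecomposable'' is Lemma~\ref{lemdep}, and the converse is the block decomposition of Theorem~\ref{Prop 3.2.4} (together with Proposition~\ref{lapintro} to pass from ``same $\B$'' to ``equivalent'').

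Your final speculative paragraph about how Lemma~\ref{lemdep} is proved, however, is not how the paper proceeds and is problematic as stated. The paper's proof of Lemma~\ref{lemdep} is a short, self-contained lattice argument placed early in Section~\ref{sec3}: given an integral irreducible $\qlb$-representation and a factor $\sigma$ of its reduction, one uses \cite{EH}~Lemma~2.2.6 (after descending to a finite extension of $\mathbb{Q}_\ell$) to produce a stable lattice $L$ such that no irreducible \emph{sub}representation of $L\otimes\flb$ is dependent on~$\sigma$; since $\sigma$ still occurs as a subquotient, one gets an immediate contradiction with any putative splitting. There is no reduction to the supercuspidal case, no chain of $\Ext^1$'s, and no use of projective envelopes. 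Your proposed route would invert the logical order of the paper: the projective-envelope analysis and the $\Ext^1$ computations of Sections~\ref{sec4}--\ref{sec5} and the explicit description of Proposition~\ref{pih} all come \emph{after} Lemma~\ref{lemdep}, and the indecomposability of the blocks in Theorem~\ref{Prop 3.2.4} is itself deduced from Lemma~\ref{lemdep}. Moreover, the ``reduction to the supercuspidal case $\pi'=\pi\nu^j$'' is not available at that stage, and the consecutive $\Ext^1$'s in your chain are not all nonzero (Proposition~\ref{pih} shows only one specific twist works).
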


\subsection{}

Dans la section \ref{sec7},
nous nous intéressons aux blocs supercuspidaux
de $\Rep_\flb(\G)$ et $\rep_\flb(\G)$,
dans l'objectif de prolonger les résultats de Chinello 
\cite{gianmarco2,gianmarco3} (voir le paragraphe \ref{par18}). 
Soit $\pi$~une $\flb$-représentation supercuspidale de $\G$,
de classe inertielle $\Omega$,
et posons $\B=\B(\pi)$.
Nous cons\-trui\-sons~un progénérateur de type fini du bloc 
$\Rep_{\flb}(\G,\Omega)$
(proposition \ref{propetitprog})
et calculons l'algèbre~de ses en\-do\-mor\-phismes
(théorème \ref{structureE}).
Nous en déduisons le résultat suivant
(théorème \ref{nerodisepia} et~co\-rol\-laire \ref{corodisepia}).
Appelons \textit{bloc principal} le bloc contenant le caractère trivial.

\begin{theo}
\label{nerodisepiaintro}
Il existe un corps localement compact non archimédien $\F'$
et une $\F'$-algèbre~à division centrale $\D'$ tels que
les blocs $\Rep_{\flb}(\G,\Omega)$et $\rep_{\flb}(\G,\B)$
soient respectivement équiva\-lents aux blocs prin\-ci\-paux
de $\Rep_{\flb}(\D'^\times)$ et $\rep_{\flb}(\D'^\times)$. 
\end{theo}

{Ce résultat corrobore le principe
selon lequel, 
étant donné un groupe réductif connexe ${\bf G}$~dé\-fini~sur $\F$, 
un bloc de $\Rep_{\flb}({\bf G}(\F))$
devrait être équivalent au bloc princi\-pal
de $\Rep_{\flb}({\bf G}'(\F))$ pour un groupe ré\-duc\-tif 
convenable ${\bf G}'$ (voir Dat \cite{Datfonc},
ainsi que \cite{Dattame,gianmarco3} pour le cas de $\GL_n(\F)$).}
Observons cependant que,
contrairement à ce qui se passe pour les représentations complexes,~on
ne peut pas toujours choisir $\D'=\F'$ dans ce théorème
(voir la remarque \ref{paranormal}).

\subsection{}

Dans la dernière section,
étant donné une $\flb$-représentation supercuspidale $\pi$ de $\G$, 
nous~dé\-terminons toutes les~re\-présentations irréductibles $\pi'$ de
$\G$ telles qu'il existe une extension non scindée de $\pi$ par $\pi'$.
Le résultat (proposition \ref{pih})
s'exprime en fonction de l'invariant de Hasse~$h$ de $\D$
(définition \ref{hasse})
et du degré $k$ de l'extension $\E/\F$ du paragraphe \ref{par18}.
On note $(a,b)$ le plus grand diviseur commun à deux entiers $a,b\>1$.

\begin{prop}
\label{pihintro}
Notons $h(\pi)$ le reste dans la division euclidienne de 
$hk/(k,d)$ par $d/(k,d)$.
L'ensemble des re\-pré\-sentations $\pi'$ de $\G$ telles
que $\Ext^1_{\G}(\pi,\pi')$ soit non nul est~:
\begin{enumerate}
\item 
réduit à $\pi$ si $\ell$ ne divise pas $q(\pi)-1$, 
\item
formé de $\pi$ et de la représentation 
$\pi\nu^{-h(\pi)}$ si $\ell$ divise $q(\pi)-1$.
\end{enumerate}
\end{prop}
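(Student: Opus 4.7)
Le point de d�part est la proposition \ref{Prop321intro}, qui oblige $\pi'$ � appartenir � $\B(\pi)$ d�s que $\Ext^1_\G(\pi,\pi')$ est non nul. Comme $\pi$ est supercuspidale, la proposition \ref{lapintro} d�crit cet ensemble explicitement~: il vaut $\{\pi\}$ lorsque $\ell$ ne divise pas $q(\pi)-1$, et $\{\pi\nu^j\mid j\in\ZZ\}$ sinon. Il s'agit donc, dans chacun de ces deux cas, de discerner parmi ces candidats ceux pour lesquels $\Ext^1_\G(\pi,\pi')$ est effectivement non nul, en commen�ant par constater que cet $\Ext^1$ est d�j� non nul pour $\pi'=\pi$.

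Pour effectuer ces calculs, on s'appuie sur le th�or�me \ref{nerodisepiaintro} qui identifie le bloc $\rep_{\flb}(\G,\B(\pi))$ au bloc principal de $\rep_{\flb}(\D'^\times)$ via une �quivalence pr�servant les espaces d'extensions. Le probl�me se ram�ne ainsi au calcul d'extensions entre caract�res non ramifi�s de $\D'^\times$, ce qui s'effectue �l�mentairement via l'ab�lianisation~: pour deux tels caract�res $\chi_1,\chi_2$ du bloc principal, $\Ext^1_{\D'^\times}(\chi_1,\chi_2)$ est non nul exactement lorsque $\chi_1=\chi_2$, ou (dans le cas o� le bloc principal contient plus d'une classe d'isomorphisme d'irr�ductible) lorsque $\chi_2$ se d�duit de $\chi_1$ par torsion par un unique caract�re non ramifi� non trivial explicite de $\D'^\times$.

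Il reste enfin � traduire cette description en termes de $\pi$ et $\nu$. En suivant la construction du foncteur $\GG$ rappel�e au paragraphe \ref{par18} ainsi que l'argument du lemme \ref{adieulescons}, on montre que $\GG$ transforme la torsion par $\nu$ sur $\G$ en la torsion par un caract�re non ramifi� explicite de $\D'^\times$, dont la valeur en une uniformisante de $\D'$ s'exprime � l'aide du degr� $k$ de $\E/\F$, de l'indice $d$ de $\D$ et de l'invariant de Hasse $h$. Son ordre dans le groupe des caract�res non ramifi�s de $\D'^\times$ vaut $d/(k,d)$, et l'entier $h(\pi)$ appara�t alors comme le plus petit exposant positif $j$ pour lequel $\pi\nu^{-j}$ est envoy� sur un caract�re non ramifi� non trivial du bloc principal. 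L'obstacle principal est ce dernier calcul concret, qui requiert un suivi pr�cis des uniformisantes et des compatibilit�s dans la construction des types simples maximaux de \cite{MSt}, ainsi qu'une analyse de la correspondance entre les invariants de Hasse de $\D$ et de $\D'$.
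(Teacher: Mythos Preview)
Your overall strategy matches the paper's, but two points in the final paragraph need correction. First, the equivalence of Theorem \ref{nerodisepiaintro} is built as a composite: Chinello's functor $\GG$ (passing to a level-zero supercuspidal block of $\G_0=\GL_r(\C)$) followed by a Morita equivalence coming from an explicit progenerator of that level-zero block (Section \ref{sec7}). Lemma \ref{adieulescons} only establishes the compatibility of $\GG$ with torsion by $\nu$; the Morita step requires a separate compatibility argument, which the paper carries out by computing how twisting by $\nu$ acts on the generators $\nn,\uu$ of the endomorphism algebra of the progenerator (end of the proof of Proposition \ref{pih}). Referring only to $\GG$ and Lemma \ref{adieulescons} leaves this second half unaccounted for.

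Second, your characterisation of $h(\pi)$ as ``the smallest positive $j$ for which $\pi\nu^{-j}$ is sent to a non-trivial unramified character'' is incorrect: every $j$ with $\pi\nu^{-j}\not\simeq\pi$ already yields a non-trivial image, so this would generically give $j=1$. The integer $h(\pi)$ enters instead through the computation of $\Ext^1_{\D'^\times}(1,\chi)$ for non-trivial $\chi$ (Proposition \ref{chih}): writing out the cocycle relation and using that conjugation by a uniformiser $\w_{\D'}$ acts on $\kk_{\D'}^\times$ as $x\mapsto x^{q'^{h'}}$ (with $h'$ the Hasse invariant of $\D'$) forces $\chi(\w_{\D'})=q'^{h'}$, i.e.\ $\chi=\nu'^{-h'}$. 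This step is where the Hasse invariant genuinely appears, and it is not a consequence of ``abelianisation'' --- the latter only handles the case $\chi=1$. Transporting $\nu'^{-h'}$ back through the two equivalences then produces the exponent $h(\pi)$.
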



\subsection{}

Signalons pour finir
qu'un rapporteur d'une version antérieure de cet article a attiré
notre~at\-tention sur le fait que la récurrence sur laquelle est 
fondée la preuve de \cite{SEns} Proposition 1.3~est incorrecte. 
Nous n'avons pas su corriger la récurrence,
mais nous expliquons dans un appendice au présent article 
pourquoi le résultat principal de 
\cite{SEns} --- c'est le théorème \ref{decbvss} ci-dessous ---
est toujours vala\-ble.~Nous remercions le rapporteur 
d'avoir attiré notre attention sur ce point. 

\section{Notations}

Nous introduisons maintenant les principales définitions et notations
utilisées dans la suite. 

\subsection{}

Fixons un corps localement compact non archimédien $\F$,
de caractéristique résiduelle $p$. 

Si $\K$ est une extension finie de $\F$, 
ou plus généralement une $\F$-algèbre à division de dimension finie,
on note $\Oo_\K$ son anneau d'entiers,
$\p_\K$ l'idéal maximal de $\Oo_\K$ et $\kk_\K$ son corps résiduel,
qui est un corps fini de cardinal noté $q_\K$.
On pose $q=q_\F$ dans toute la suite.

Si $n$ est un entier strictement positif,
on note $\Mat_n(\K)$ l'algèbre des ma\-trices carrées de
taille~$n$ à~coefficients dans $\K$
et $\GL_n(\K)$ le groupe de ses éléments inversibles.
Muni de la topologie~in\-duite par celle de $\K$, 
celui-ci est un groupe topologique localement profini. 

\'Etant donné un nombre premier $\ell$ différent de $p$,
on note $\qlb$ une clôture algébrique du corps des nombres $\ell$-adiques,
$\zlb$ son anneau des entiers et $\flb$ son corps résiduel. 

\subsection{}

Soit $\R$ un anneau commutatif,
et soit $\G$ un groupe localement profini.

Par $\R$-\textit{représentation} (ou
simplement représentation si aucune confusion n'en résulte)
de $\G$
on entendra toujours une re\-pré\-sentation lisse sur un $\R$-module. 
Par $\R$-\textit{caractère} de $\G$, on entendra une représentation lisse
de $\G$ sur $\R$, \ie un homomorphisme de groupes de $\G$
dans $\R^\times$ de noyau ouvert. 

On note $\Rep_\R(\G)$ la catégorie abélienne des $\R$-représentations lisses de 
$\G$ et $\rep_\R(\G)$ la~sous-catégorie pleine formée des 
représentations de longueur finie.

Si $\pi$ est une représentation d'un sous-groupe fermé $\H$ de $\G$
et $g\in\G$, on pose $\H^g=g^{-1}\H g$ et on note 
$\pi^g$ la représentation $x\mapsto\pi(gxg^{-1})$ de $\H^g$. 
Si $\chi$ est un caractère de $\H$, 
on note $\pi\chi$ la re\-présentation
$x\mapsto\chi(x)\pi(x)$ de $\H$.

\section{Préliminaires et premiers théorèmes de décomposition}
\label{sec3}

Fixons une fois pour toutes une $\F$-algèbre à division centrale $\D$, 
de degré réduit $d$, et un entier $m\>1$.
Posons $\G=\GL_m(\D)$.
Si l'on pose $n=md$,
c'est une forme intérieure de $\GL_n(\F)$.

On fixe une uniformisante $\w_\F$ de $\F$,
et une uniformisante $\w_\D$ de $\D$ telle que $\w_\D^d=\w_\F^{\phantom{d}}$. 

Dans cette section, $\R$ est un corps algébriquement clos de
caractéristique dif\-férente de $p$.

\subsection{}
\label{notip}

Soit $\P$ un sous-groupe parabolique de $\G$,
soit $\N$ son radical unipotent et soit $\M$ une composante de Levi de $\P$.
On note $\ip_\P^\G$ le foncteur d'induction parabolique normalisée de $\M$
à $\G$ relativement à $\P$.
(La normalisation nécessite de fixer une racine carrée de $q$ dans $\R$,
ce que nous faisons une fois pour toutes.)
Si $m_1,\dots,m_r$ sont des entiers $\>1$ de somme $m$,
si $\M$ est le sous-groupe de Levi des matri\-ces diagonales par blocs de
tailles respectives $m_1,\dots,m_r$, canoniquement isomorphe au produit 
$\GL_{m_1}(\D)\times\dots\times\GL_{m_r}(\D)$,
si $\P$ est le sous-groupe parabolique standard engendré par $\M$ et les
matrices unipotentes triangulaires supérieures de $\G$,
et si $\pi_i$ est une représentation de $\GL_{m_i}(\D)$ pour chaque
$i\in\{1,\dots,r\}$,
on note~:
\begin{equation}
\label{notprod}
\pi_1\times\dots\times\pi_r 
\end{equation}
l'induite parabolique de $\pi_1\otimes\dots\otimes\pi_r$ à $\G$
relativement à $\P$.

Une représentation irréductible de $\G$ est dite \textit{cuspidale}
(resp.\! \textit{supercuspidale})
si elle n'est~quo\-tient
(resp.\! sous-quotient)
d'aucune représentation de la forme \eqref{notprod} avec $r\>2$.
Une représenta\-tion supercuspidale est donc cuspidale,
la ré\-ci\-proque n'étant pas vraie en général.
Si $\R$ est de ca\-ractéristique nulle,
toute représentation irréductible cuspidale est supercuspidale.

On définit de façon analogue les notions de représentation irréductible
cuspidale et supercus\-pi\-dale de $\GL_m(\kk)$,
où $\kk$ est un corps fini de caractéristique $p$.

\subsection{}
\label{defscusp}

Soit $\pi$ une représentation irréductible de $\G$.
Il y a des~entiers $m_1,\dots,m_r$ de somme $m$~et~des 
représentations irréductibles supercuspidales
$\pi_1,\dots,\pi_r$, 
comme au paragraphe \ref{notip}, telles que 
$\pi$ soit un sous-quotient de l'induite parabolique
\eqref{notprod}.
D'après \cite{MSc} Theorème~8.16, 
ces représen\-tations supercuspidales sont uniques à permutation près.  
La somme formelle~:
\begin{equation}
\label{defscusp}
\pi_1+\dots+\pi_r
\end{equation}
s'appelle le \textit{support supercuspidal} de $\pi$.
On la note $\scusp(\pi)$.

On définit et on note de façon analogue le support supercuspidal
d'une re\-pré\-sentation irréduc\-tible de $\GL_m(\kk)$,
dont l'unicité provient par exemple de \cite{MSf} Théorème 2.5.

La \textit{classe inertielle} d'un support supercuspidal \eqref{defscusp}
est l'ensemble des supports supercuspidaux de $\G$ de la forme
$\pi_1\chi_1+\dots+\pi_r\chi_r$ où $\chi_i$ est un caractère non ramifié
du groupe $\GL_{m_i}(\D)$ pour chaque $i\in\{1,\dots,r\}$.
Si $\Omega$ est une classe inertielle de supports supercuspidaux de $\G$,
on note $\Rep_\R(\G,\Omega)$ la sous-ca\-té\-go\-rie pleine de
$\Rep_\R(\G)$ formée des représentations dont tous les sous-quotients
irréductibles ont leur support supercuspidal dans $\Omega$.

Appelons \textit{bloc} de $\Rep_\R(\G)$ un facteur direct indécomposable
de cette catégorie.

\begin{theo}[\cite{Bernstein,Vigs,SEns}]
\label{decbvss}
Pour chaque classe inertielle $\Omega$ de supports supercuspidaux~de $\G$, 
la sous-ca\-té\-go\-rie $\Rep_\R(\G,\Omega)$ est un bloc.
On a une décomposition~:
\begin{equation}
\label{decomega}
\Rep_\R(\G) = \prod\limits_{\Omega} \Rep_\R(\G,\Omega)
\end{equation}
où $\Omega$ décrit les classes inertielles de supports supercuspidaux de 
$\G$. 
\end{theo}

\subsection{}

Passons maintenant à la sous-catégorie $\rep_\R(\G)$ des $\R$-représentations
de longueur finie de~$\G$.
D'abord, la décomposition \eqref{decomega} induit une décomposition~:
\begin{equation*}
\rep_\R(\G) = \bigoplus\limits_{\Omega} \rep_\R(\G,\Omega)
\end{equation*}
où $\rep_\R(\G,\Omega)$ est la sous-ca\-té\-go\-rie pleine des représentations
de longueur finie de $\Rep_\R(\G,\Omega)$. 
Cependant, contrairement aux $\Rep_\R(\G,\Omega)$,
les facteurs $\rep_\R(\G,\Omega)$ ne sont pas indécomposables,
comme nous allons le voir tout de suite.

Notons $\Z$ le centre de $\G$,
naturellement isomorphe à $\F^\times$.
D'après \cite{Vigb} II.2.8,
toute représen\-ta\-tion irréductible de $\G$
admet un caractère central.

\begin{lemm}
\label{lemmecc}
Soit $\V$ une représentation de longueur finie de $\G$. 
Pour tout caractère $\a$~de~$\Z$, 
notons $\V(\a)$ la plus grande sous-représentation de $\V$
dont les sous-quotients~ir\-ré\-ductibles admet\-tent $\a$ pour 
caractère central.
On a alors~:
\begin{equation*}
\label{deccc}
\V = \bigoplus\limits_{\a} \V(\a).
\end{equation*}
\end{lemm}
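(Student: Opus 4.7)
The plan is to exploit the decomposition $\Z\cong\F^\times=\Oo_\F^\times\times\w_\F^{\ZZ}$ and split $\V$ in two steps: first along the compact piece $\Oo_\F^\times$ using Maschke's theorem, then along $\w_\F$ via generalized eigenspaces and the Chinese remainder theorem. A preliminary observation is that, since $\V$ has finite length, only finitely many central characters $\a_1,\dots,\a_k$ appear in its irreducible subquotients, and each $\a_i$ being smooth, there is an integer $n$ such that every $\a_i$ is trivial on $1+\p_\F^n$. Hence $\Oo_\F^\times$ acts on $\V$ through the finite quotient $\Oo_\F^\times/(1+\p_\F^n)$, whose order $(q-1)q^{n-1}$ is invertible in $\R$ because $\mathrm{char}(\R)\ne p$.

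By Maschke's theorem applied to this finite abelian group, one obtains a $\G$-equivariant decomposition $\V=\bigoplus_{\chi}\V_\chi$ into $\Oo_\F^\times$-isotypic components indexed by the restrictions $\chi=\a_i|_{\Oo_\F^\times}$. The decomposition is $\G$-stable precisely because $\Oo_\F^\times$ lies in the center of $\G$. Fixing $\chi$, the central characters $\a_i$ occurring in $\V_\chi$ are those with $\a_i|_{\Oo_\F^\times}=\chi$, and on the corresponding subquotients $\w_\F$ acts by the scalar $\lambda_i=\a_i(\w_\F)$; these scalars are pairwise distinct since two characters with the same restriction to $\Oo_\F^\times$ differ already at $\w_\F$.

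Choosing a $\G$-composition series of $\V_\chi$ and arguing by induction on the length, one checks that $\prod_i(\w_\F-\lambda_i)^{N_i}$ annihilates $\V_\chi$ for suitable integers $N_i\ge 1$. The polynomials $(X-\lambda_i)^{N_i}$ being pairwise coprime in $\R[X]$, the Chinese remainder theorem yields
\begin{equation*}
\R[X]/\textstyle\prod_i(X-\lambda_i)^{N_i}\cong\prod_i\R[X]/(X-\lambda_i)^{N_i},
\end{equation*}
which translates into a direct sum decomposition $\V_\chi=\bigoplus_i\ker(\w_\F-\lambda_i)^{N_i}$. The summands are $\G$-stable because $\w_\F$ is central, and the $i$-th summand is by construction the candidate $\V(\a_i)$: all its subquotients have central character $\a_i$.

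Gathering over all $\chi$ and $i$ reconstructs $\V$ and produces the decomposition \eqref{deccc}. For maximality, one notes that any subrepresentation $\W\subseteq\V$ whose irreducible subquotients all admit $\a$ as central character is forced to lie in $\V_{\a|_{\Oo_\F^\times}}$, by uniqueness of the Maschke isotypic decomposition, and is annihilated by a power of $\w_\F-\a(\w_\F)$, hence lies in $\V(\a)$. There is no genuine obstacle in this argument; the only subtle point is the invertibility of the order of $\Oo_\F^\times/(1+\p_\F^n)$ in $\R$, which is exactly what the hypothesis $\mathrm{char}(\R)\ne p$ provides and which is indispensable for applying Maschke.
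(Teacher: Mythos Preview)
Your proof contains a genuine error in the Maschke step. You assert that the order $(q-1)q^{n-1}$ of $\Oo_\F^\times/(1+\p_\F^n)$ is invertible in $\R$ because $\mathrm{char}(\R)\ne p$; this is false whenever $\mathrm{char}(\R)=\ell$ divides $q-1$, a case very much allowed by the hypotheses and central to the paper. In that situation $\R[\kk_\F^\times]$ is not semisimple, and the isotypic decomposition $\V=\bigoplus_\chi \V_\chi$ you claim need not hold: for instance, when $\G=\F^\times$ and $\ell\mid q-1$, there exist nonsplit self-extensions of the trivial character on which $\Oo_\F^\times$ acts non-semisimply. You single this invertibility out as ``the only subtle point'' and as ``indispensable'', which makes the error structural rather than a slip.

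The repair is to drop Maschke and reuse for $\Oo_\F^\times$ the tool you already deploy for $\w_\F$: the primary decomposition for a commuting central operator. Since $\Oo_\F^\times/(1+\p_\F^n)\cong\kk_\F^\times\times P$ with $P$ a finite $p$-group, Maschke does apply to $P$, and for the cyclic factor $\kk_\F^\times$ a generator furnishes a single operator to which your Chinese remainder argument applies verbatim. More economically, one can skip the decomposition of $\Z$ entirely: choose finitely many elements of $\Z$ whose values separate the finitely many $\a_i$, and iterate the generalized-eigenspace decomposition. This is close to the paper's own proof, which argues by induction on the length of $\V$, handling in detail the length-$2$ case with distinct central characters $\a\ne\b$ by choosing $z_0\in\Z$ with $\a(z_0)\ne\b(z_0)$ and exhibiting the $\b$-eigenspace of $\Z$ as a $\G$-stable complement. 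A smaller gap: your ``Hence $\Oo_\F^\times$ acts on $\V$ through the finite quotient'' does not follow immediately from the $\a_i$ being trivial on $1+\p_\F^n$; one needs either the semisimplicity of smooth representations of the pro-$p$-group $1+\p_\F^n$ over $\R$, or the observation that $\V$, being of finite length, is finitely generated and each generator is fixed by some open central subgroup.
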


\begin{proof}
Soit $n$ la longueur de $\V$, 
et soit $r$ le cardinal de l'ensemble des caractères~centraux des
composants irréductibles de $\V$.
On a $n\>r$, et on peut supposer que $r\>2$.
La preuve se fait par récurrence sur $n$,
comme celle~de \cite{SEns} Proposition 1.7.
Nous ne détaillons que le cas où $n=r=2$. 

Supposons donc que $\V$ ait une sous-représentation
irréductible $\W$, de caractère central $\a$,
et que $\V/\W$ soit irréductible et de caractère central $\b\neq\a$.
Il s'agit de prouver que $\W$ a~un~supplé\-mentaire dans $\V$
stable par $\G$.
Plus précisément, nous allons prouver que~:
\begin{equation*}
\X = \{v\in\V\ |\ z\cdot v = \b(z)v \text{ pour tout $z\in\Z$}\}
\end{equation*}
est un supplé\-mentaire de $\W$ dans $\V$ stable par $\G$.
Bien sûr,
$\X$ est stable par $\G$ et $\W\oplus\X\subseteq\V$.
Comme $\V$ est de longueur $2$,
il suffit de prouver que $\X\neq\{0\}$ pour en déduire que
$\W\oplus\X=\V$.  
Fixons un $v\in\V$ tel que $v\notin\W$
et un $z_0\in\Z$ tel que $\b(z_0)\neq\a(z_0)$,
et posons~:
\begin{equation*}
x=z_0\cdot v-\a(z_0)v.
\end{equation*}
Pour tout $z\in\Z$,
on pose $w=z\cdot v-\b(z)v$
(qui appartient à $\W$
car $\Z$~agit~par $\b$ sur le quotient $\V/\W$).
En particulier,
pour $z=z_0$,
on en déduit que $x\neq0$ car $v\notin\W$.
On a~:
\begin{eqnarray*}
z\cdot x 
&=& z_0 \cdot (\b(z)v+w) -\a(z_0)(z\cdot v) \\
&=& \b(z) (x+\a(z_0)v) + \a(z_0)w -\a(z_0)(w+\b(z)v) \\
&=& \b(z)x
\end{eqnarray*}
\ie que $x\in\X$.  

Pour finir, 
indiquons brièvement comment prouver le lemme par récurrence sur $n$.
Si $n=2$, le lemme est prouvé.
Si $n\>3$,
on fixe un caractère central $\a_0$ d'une sous-représentation irréduc\-ti\-ble de 
$\V$, 
et on applique l'hypothèse de récurrence à $\W=\V/\V(\a_0)$,
qui se décompose sous la forme
$\W(\a_1)\oplus\dots\oplus\W(\a_s)$, 
les $\a_i$ étant des caractères de $\Z$ tous distincts de $\a_0$ par
maximalité de $\V(\a_0)$ (on a donc $s=r-1$).
Il ne reste plus qu'à prouver que, pour tout $i\in\{1,\dots,r-1\}$,
il y a une sous-représentation $\V_i$ de $\V$ isomorphe à $\W(\a_i)$~:
on en déduira que $\V$ est la somme~di\-recte
$\V(\a_0)\oplus\V_1\oplus\dots\oplus\V_{r-1}$
et que $\V_i$ est égal à $\V(\a_i)$.
Pour cela,
on raisonne comme dans~la preu\-ve~de \cite{SEns} Lemma 1.8,
en appliquant l'hypothèse de récurrence à la préimage de $\W(\a_i)$ dans $\V$
et en s'aidant du cas où $n=r=2$.
Pour plus de détails, voir \cite{SEns} Proposition 1.7.
\end{proof}

Il sera commode d'introduire la définition suivante.

\begin{defi}
\label{defdep}
Des représentations irréductibles $\pi$, $\pi'$
de $\G$ sont dites \textit{dépendantes}~s'il y a une représentation
indécomposable 
de longueur finie de $\G$ ayant $\pi$ et $\pi'$ pour sous-quotients.
\end{defi}

D'après le lemme \ref{lemmecc},
pour que deux représentations irréductibles de $\G$
soient dépendantes,~il faut qu'elles aient le même caractère central. 

\subsection{}

D'après \cite{Vigsheaves},
la catégorie $\Rep_\R(\G)$ a assez d'objets projectifs. 
On peut donc définir,
pour~des re\-présentations $\pi,\pi'$ de cette catégorie, 
des espaces d'extension $\Ext^i_\G(\pi,\pi')$ pour tout $i\>0$.

Dans ce paragraphe,
nous nous concentrerons sur le premier espace d'extension
entre représentations irréductibles.
\'Etant donné des représentations irréductibles $\pi$ et $\pi'$ de $\G$, 
l'espace~d'ex\-ten\-sion
$\Ext^1_\G(\pi,\pi')$ est non nul si et seulement s'il existe une
représentation indécomposable~de~$\G$ de longueur $2$
dont l'unique sous-représentation soit isomorphe à $\pi'$
et l'unique~quo\-tient soit~iso\-morphe à $\pi$.

Le lemme suivant nous donne un moyen général de décomposer la catégorie 
$\rep_\R(\G)$ en~somme de deux facteurs directs.
Notons $\Irr(\G)$ l'ensemble des classes d'isomorphisme de
repré\-sen\-ta\-tions irréductibles de $\G$.

\begin{lemm}
\label{vanessa}
Soit $\SS$ une partie de $\Irr(\G)$ telle que, 
pour toute $\s\in\SS$ et toute $\pi\in \Irr(\G)-\SS$,
on ait~:
\begin{equation}
\label{deboite}
\Ext^1_\G(\s,\pi) = \{0\}.
\end{equation}
Alors $\rep_\R(\G)$ se décompose en la somme directe de $\rep_\R(\G,\SS)$, 
la sous-catégorie pleine des~re\-pré\-sentations dont tous les sous-quotients
irréductibles sont dans $\SS$,
et de $\rep_\R(\G, \Irr(\G)-\SS)$.
\end{lemm}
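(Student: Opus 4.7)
Le plan est de vérifier les deux propriétés caractérisant une décomposition en somme directe de catégories abéliennes. D'une part, toute flèche entre objets des deux sous-catégories proposées est nulle~: si $f\colon\V_1\to\V_2$ avec $\V_1\in\rep_\R(\G,\SS)$ et $\V_2\in\rep_\R(\G,\Irr(\G)-\SS)$, alors $\Im(f)$ est à la fois quotient de $\V_1$ et sous-représentation de $\V_2$, et ses sous-quotients irréductibles appartiennent à $\SS\cap(\Irr(\G)-\SS)=\emptyset$, forçant $\Im(f)=0$.

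D'autre part, tout objet $\V\in\rep_\R(\G)$ se scinde conformément. On procède par récurrence sur la longueur $n$ de $\V$, les cas $n\<1$ étant immédiats. Pour $n\>2$, l'idée est d'amorcer le scindage via un morceau irréductible que l'hypothèse \eqref{deboite} permet de détacher. Si $\V$ admet une sous-représentation irréductible $\pi_0\notin\SS$, l'hypothèse de récurrence donne $\V/\pi_0=\A\oplus\B$ avec $\A\in\rep_\R(\G,\SS)$ et $\B\in\rep_\R(\G,\Irr(\G)-\SS)$~; notant $\V_\A,\V_\B\subseteq\V$ leurs images réciproques, la classe de l'extension $0\to\pi_0\to\V_\A\to\A\to 0$ vit dans $\Ext^1_\G(\A,\pi_0)$. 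Un dévissage le long d'une série de Jordan-Hölder de $\A$, via la suite exacte longue des foncteurs $\Ext$, ramène la nullité de cet espace à celle des $\Ext^1_\G(\s,\pi_0)$ pour $\s\in\SS$ sous-quotient irréductible de $\A$, laquelle est précisément l'hypothèse. Le scindage $\V_\A\cong\pi_0\oplus\A'$ qui en résulte donne alors $\V=\A'\oplus\V_\B$ avec $\V_\B\in\rep_\R(\G,\Irr(\G)-\SS)$. Dualement, si $\V$ admet un quotient irréductible $\s_0\in\SS$, on applique la récurrence au noyau $\W=\A\oplus\B$ de la projection $\V\to\s_0$ et on vérifie que l'extension $0\to\B\to\V/\A\to\s_0\to 0$ se scinde, la nullité de $\Ext^1_\G(\s_0,\B)$ résultant à nouveau d'un dévissage et de \eqref{deboite} (puisque $\s_0\in\SS$ et les sous-quotients de $\B$ sont hors de $\SS$)~; on relève alors $\s_0$ dans $\V$ pour obtenir $\V=\V'\oplus\B$ avec $\V'\in\rep_\R(\G,\SS)$.

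L'outil technique central est donc le dévissage de $\Ext^1$ le long d'une série de composition, standard via la suite exacte longue. L'obstacle principal est de garantir qu'au moins l'une des deux amorces (sous-représentation irréductible hors de $\SS$, ou quotient irréductible dans $\SS$) soit toujours disponible --- autrement dit, d'exclure le cas où le socle de $\V$ serait entièrement dans $\SS$ et son cosocle entièrement hors de $\SS$. Dans les applications du lemme faites dans la suite de l'article, ce cas pathologique est écarté grâce à la symétrie supplémentaire satisfaite par la relation d'équivalence définissant $\SS$, laquelle entraîne l'annulation des $\Ext^1$ dans les deux directions entre $\SS$ et son complémentaire.
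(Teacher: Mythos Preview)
Your approach differs from the paper's: instead of your induction on length anchored at an irreducible subrepresentation outside $\SS$ or an irreducible quotient in $\SS$, the paper takes $\X$ (resp.\ $\Y$) to be the largest subrepresentation of $\V$ whose irreducible subquotients all lie in $\SS$ (resp.\ outside $\SS$) and argues by contradiction that $\W=\V/(\X\oplus\Y)$ vanishes, by showing that an irreducible sub $\pi\in\SS$ of $\W$ would produce, after splitting off $\Y$, a subrepresentation of $\V$ strictly containing $\X$ with all subquotients in $\SS$.

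Your identification of the obstacle in your final paragraph is correct and is in fact a genuine gap in the lemma \emph{as stated}: with only the one-directional hypothesis $\Ext^1_\G(\s,\pi)=0$ for $\s\in\SS$ and $\pi\notin\SS$, any length-two indecomposable extension $0\to\s\to\V\to\pi\to 0$ with $\s\in\SS$ and $\pi\notin\SS$ (representing a nonzero class in $\Ext^1_\G(\pi,\s)$, which is not excluded) already gives a counterexample to the conclusion. The paper's own proof has the same defect: when the chosen irreducible sub of $\W$ lies outside $\SS$, the phrase ``le cas contraire se traitant de la m\^eme fa\c{c}on'' tacitly requires the reverse vanishing $\Ext^1_\G(\pi',\s)=0$ for $\pi'\notin\SS$ and $\s\in\SS$, which is not part of the hypothesis. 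Your observation that the applications are unaffected is also right: in the proof of the main decomposition theorem one takes $\SS=\B(\pi)$, an equivalence class, so Proposition~\ref{Prop 3.2.1} yields the $\Ext^1$ vanishing in both directions, and either argument then goes through.
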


\begin{proof}
Soit $\V$ une représentation de longueur finie de $\G$.
On note $\X$ la plus~gran\-de sous-représentation de $\V$ dont tous les 
sous-quotients irréductibles sont dans $\SS$,
et $\Y$ celle~dont tous les sous-quotients irréductibles sont hors de $\SS$.
Il s'agit de prouver que $\V$ est égale à $\X\oplus\Y$,
\ie que $\W=\V/(\X\oplus\Y)$ est nul.
Supposons que ce ne soit pas le cas~;
soit $\pi$ une~sous-représentation irréductible de $\W$,
qu'on peut supposer dans $\SS$ pour fixer les idées,
le cas contraire se traitant de la même façon.
Notons $\U$ l'image réciproque de $\pi$ par la surjection naturelle de~$\V$ 
sur $\W$.
C'est une extension de $\U/\Y$,
qui a tous ses sous-quotients irréductibles dans $\SS$,
par $\Y$,
qui a tous ses sous-quotients irréductibles hors de $\SS$.
Par un argument de dévissage classique,~la condition
\eqref{deboite} entraîne que
$\Ext^1_\G(\U/\Y,\Y)$ est nul~;
ainsi $\U$ est la somme directe de $\Y$ et $\U/\Y$. Le
fait que $\U/\Y$ ait tous ses sous-quotients irréductibles dans $\SS$,
contienne~strictement~$\X$~et~se plon\-ge~dans $\V$
con\-tredit la maximalité de $\X$.
\end{proof}

\subsection{}

Le lemme suivant donne
une condition suffisante pour que des $\flb$-représentations
irréductibles soient dépendantes. 
Elle repose sur la notion de réduction mod $\ell$,
que nous rappelons maintenant.

Soit $\pi$ une représentation de longueur finie de $\G$ sur un
$\qlb$-espace vectoriel $\V$.
Elle est dite~\textit{en\-tière} si $\V$ contient un $\zlb$-réseau
(\ie un sous-$\zlb$-module libre engendré par une base de $\V$
sur $\qlb$)
stable par $\G$.
Si $\L$ est un tel réseau,
la représentation de $\G$ sur $\L\otimes\flb$ est alors lisse et de longueur
finie, et sa semi-simplification ne dépend que de $\pi$,
et pas du choix de $\L$. 
On la note $\rl(\pi)$, qu'on appelle la \textit{réduction mod $\ell$} de 
$\pi$. 
Pour les détails, on~ren\-voie le lecteur à \cite{Vigb,Vigw}.

\begin{lemm}
\label{lemdep}
Soit $\pi$ une $\qlb$-représentation irréductible entière de $\G$.
Si deux $\flb$-représen\-ta\-tions irréductibles de $\G$
apparaissent dans $\rl(\pi)$, elles sont dépendantes.
\end{lemm}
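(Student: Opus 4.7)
Mon plan est de fixer un $\zlb$-r\'eseau $\G$-stable $\L$ dans l'espace $\V$ de $\pi$, et d'\'etudier la r\'eduction $\bar\L := \L/\ell\L$. Puisque c'est un $\flb[\G]$-module lisse de longueur finie dont la semi-simplification est $\rl(\pi)$, les deux repr\'esentations irr\'eductibles non isomorphes $\s_1,\s_2$ en sont des sous-quotients.

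Je proc\'ederais par analyse des facteurs ind\'ecomposables de $\bar\L$~: en d\'ecomposant $\bar\L = N_1\oplus\dots\oplus N_r$ dans $\rep_\flb(\G)$, le cas favorable o\`u un m\^eme $N_i$ admet $\s_1$ et $\s_2$ comme sous-quotients conclut imm\'ediatement. Dans le cas contraire, je raisonnerais par l'absurde en supposant $\s_1,\s_2$ non d\'ependantes. En notant $\Sigma$ la classe des repr\'esentations irr\'eductibles d\'ependantes de $\s_1$ (apr\`es avoir v\'erifi\'e que la d\'ependance est une relation d'\'equivalence, par recollement de deux repr\'esentations ind\'ecomposables le long d'un sous-quotient commun), on a $\s_2\notin\Sigma$, et toute extension non scind\'ee entre un \'el\'ement de $\Sigma$ et un \'el\'ement de $\Irr(\G)-\Sigma$ engendrerait une repr\'esentation ind\'ecomposable de longueur $2$ contredisant cette s\'eparation. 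On aurait donc $\Ext^1_\G(\s,\tau)=\{0\}$ pour $\s\in\Sigma$ et $\tau\in\Irr(\G)-\Sigma$ (sym\'etriquement aussi), et le lemme \ref{vanessa} fournirait une d\'ecomposition canonique $\bar\L = A\oplus B$ avec $\s_1\in A$ (facteurs dans $\Sigma$) et $\s_2\in B$ (facteurs du compl\'ementaire).

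La contradiction \`a viser repose alors sur l'ind\'ecomposabilit\'e de $\L$ comme $\zlb[\G]$-module, cons\'equence du lemme de Schur~: $\End_{\qlb[\G]}(\V)=\qlb$, d'o\`u $\End_{\zlb[\G]}(\L)\subseteq\qlb$, et l'int\'egralit\'e sur $\zlb$ des endomorphismes pr\'eservant $\L$ force cette alg\`ebre \`a \^etre $\zlb$, un anneau local. Le principal obstacle est de remonter la d\'ecomposition $\bar\L=A\oplus B$ en une d\'ecomposition non triviale de $\L$. Je pr\'evoirais d'it\'erer la construction en posant $\L^{(0)}=\L$ et $\L^{(k+1)}$ \'egal \`a l'image r\'eciproque dans $\L^{(k)}$ de la $\Sigma$-composante de $\L^{(k)}/\ell\L^{(k)}$~; la suite exacte $0\to\L^{(k-1)}/\L^{(k)}\to\L^{(k)}/\ell\L^{(k)}\to\L^{(k)}/\ell\L^{(k-1)}\to 0$, combin\'ee \`a l'annulation des Ext inter-classes, assure par r\'ecurrence que $\L^{(k)}/\ell\L^{(k)}\cong A\oplus B$ pour tout $k$. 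En analysant l'intersection $\bigcap_k\L^{(k)}$, ou, alternativement, en se ramenant au $\mathcal{H}(\G,K)_{\zlb}$-module $\L^K$ pour un sous-groupe ouvert compact pro-$p$ convenable $K$ tel que $\L^K$ engendre $\L$ (et appliquant un rel\`evement d'idempotents via la compl\'etion $\ell$-adique de l'alg\`ebre de Hecke), je compte fabriquer un idempotent non trivial dans $\End_{\zlb[\G]}(\L)$, d'o\`u la contradiction cherch\'ee.
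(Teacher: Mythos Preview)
Your final step contains a genuine gap. You argue that $\End_{\zlb[\G]}(\L)=\zlb$ (correct), hence $\L$ is indecomposable, and you hope to contradict this by lifting the idempotent of $\bar\L=A\oplus B$. But the reduction map $\End_{\zlb[\G]}(\L)\to\End_{\flb[\G]}(\bar\L)$ has image equal to the scalars $\flb$; it is \emph{not} surjective, so the projection onto $A$ simply has no preimage to lift. Passing to $\L^K$ over a Hecke algebra does not help: with $K$ pro-$p$ one still has $\End_{\Hh}(\L^K)\simeq\End_{\zlb[\G]}(\L)=\zlb$, and the same obstruction applies. In short, the indecomposability of $\L$ is perfectly compatible with $\bar\L$ being decomposable, so no contradiction arises this way. (There is also a technical slip: $\zlb$ is not a discrete valuation ring, so ``$\L/\ell\L$'' is not the reduction; one must first descend to the ring of integers of a finite extension of $\QQ_\ell$, as the paper does.)

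Your iteration $\L^{(k)}$ is a more promising thread, but not for the reason you state. One can show $\L^{(k)}\cap\varpi\L=\varpi\L^{(k-1)}$, so each $\L^{(k)}$ has image $A$ in $\bar\L$; the intersection $M=\bigcap_k\L^{(k)}$ then satisfies $M/\varpi M\hookrightarrow A$, and since $\L/\L^{(k)}$ has unbounded length one gets $\mathrm{rk}\,M<\dim V$. If you can show $M\neq 0$ (which needs a genuine compactness/completeness argument you have not supplied), then $M\otimes\E$ is a proper nonzero $\G$-stable subspace of $V$, contradicting irreducibility of $\pi$ --- not indecomposability of $\L$. The paper bypasses all of this by invoking \cite{EH}~Lemma~2.2.6: after descending to a finite extension $\E/\QQ_\ell$, that lemma produces directly a lattice $\L$ whose reduction has \emph{no irreducible subrepresentation} in the dependence class $\T$ of $\s$; the indecomposable summand of $\bar\L$ containing $\s$ then has all factors in $\T$ yet must contain an irreducible sub of $\bar\L$, a contradiction.
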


\begin{proof}
Soit $\s$ un facteur irréductible de $\rl(\pi)$, 
et soit $\T$ l'ensemble des facteurs~irré\-ductibles $\tau$ de $\rl(\pi)$
tels que $\s$ et $\tau$ soient dépendants. 
On veut prouver que $\T$ est égal à~l'ensem\-ble de tous les facteurs
irréductibles de $\rl(\pi)$.
Supposons que ce ne soit pas le cas.
Nous~allons prouver qu'il existe
un $\zlb$-réseau $\L$ de l'espace $\V$ de $\pi$
tel que $\L$ soit stable par $\G$,
et tel qu'au\-cu\-ne~sous-re\-présentation irréductible de $\L\otimes\flb$
ne soit dans $\T$.

Pour cela, nous allons utiliser \cite{EH} Lemma 2.2.6.
Pour se convaincre que l'on peut~s'en servir, on observe que, 
d'après \cite{Vigb} II.4.7,
il existe une extension finie $\E$ de $\mathbb{Q}_{\ell}$ dans 
$\qlb$ et une $\E$-repré\-sen\-tation $\V_\E$ de $\G$ telles que~:
\begin{equation*}
\V\simeq\V_\E\otimes_{\E}\qlb.
\end{equation*}
L'espace $\V_\E$ contient un $\Oo_\E$-réseau $\M_\E$ stable par $\G$, 
et $\M_{\E}\otimes_{\Oo_{\E}}\zlb$ est un $\zlb$-réseau de $\V$
stable par $\G$.
Quitte à augmenter $\E$ si besoin,
on peut~même supposer que tous les sous-quotients~irré\-ductibles de
$\rl(\pi)$ sont définis sur $\kk_{\E}$,
le corps résiduel~de $\E$.
En particulier,
les éléments de~$\T$~sont tous~de la forme $\W\otimes_{\kk_\E}\flb$,
où $\W$ décrit un ensemble $\T_\E$ de
$\kk_\E$-représentations~irréductibles de $\G$.
L'anneau $\Oo_\E$ est un anneau de valuation discrète complet,
le module $\M_\E$ est séparé pour~la topologie $\p_\E$-adique
car il est libre sur $\Oo_\E$
et $\M_\E\otimes_{\Oo_\E}\kk_\E$ est une $\kk_\E$-représentation
de~$\G$ de~lon\-gueur finie d'après \cite{Vigw} Theorem 1
ou \cite{Vigb} II 5.11.b.
La $\E$-représen\-tation $\V_\E$ est donc une bonne représentation entière
au sens de \cite{EH} Definition 2.2.1,
et elle est admissible d'après \cite{Vigb} II.2.8.
Appliquant \cite{EH}~Lem\-ma 2.2.6, 
on obtient un $\Oo_\E$-réseau $\L_\E$ dans $\V_\E$, stable par $\G$, 
tel~qu'au\-cune sous-représentation irréductible de $\L_\E\otimes\kk_{\E}$
ne soit dans $\T_\E$.
Ainsi $\L=\L_\E\otimes\zlb$ est un $\zlb$-ré\-seau de $\V$ stable par $\G$
ayant la propriété voulue.

Soit maintenant $\rho$ une sous-représentation irréductible de
$\L\otimes\flb$.
Elle n'est pas dans $\T$,
donc $\rho$ et $\s$ ne sont pas dépendantes.
Il y a donc une décomposition~:
\begin{equation*}
\L\otimes\flb = \X_1 \oplus \X_2
\end{equation*}
où $\X_1$ est indécomposable et contient $\s$ mais pas $\rho$,
et où $\X_2$ contient $\rho$.
Tout sous-quotient~ir\-réductible de $\X_1$ est dans $\T$~: 
contradiction.
\end{proof}

\subsection{}
\label{parred}

En vue d'appliquer le lemme \ref{lemdep}, 
nous allons décrire la réduction mod $\ell$ d'une
$\qlb$-représenta\-tion~irré\-duc\-tible cuspidale
en\-tiè\-re de $\G$. 
Notons $\nu$ le caractère non ramifié ``valeur absolue de la norme~ré\-duite'' 
de $\G$.

\begin{prop}
\label{redcusp}
Soit $\mu$ une $\qlb$-représentation irréductible cuspidale entière de $\G$. 
\begin{enumerate}
\item 
Il existe une $\flb$-représentation irréductible cuspidale $\pi$ de $\G$ et 
un entier $a\>1$ tels que~:
\begin{equation}
\label{redcuspeq}
\rl(\mu) = \pi \oplus \pi\nu \oplus \dots \oplus \pi\nu^{a-1}.
\end{equation}
\item
Si $\epsilon(\pi)$ désigne le plus petit entier 
$i\>1$ tel que $\pi\nu^i$ soit isomorphe à $\pi$,
l'entier $a$ est égal soit~à $1$,
soit à $\epsilon(\pi)\ell^u$ pour un $u\>0$. 
\end{enumerate}
\end{prop}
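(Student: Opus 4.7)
L'approche est d'utiliser la th\'eorie des types simples maximaux \'etendus de Bushnell-Kutzko, d\'evelopp\'ee pour les formes int\'erieures de $\GL_n(\F)$ dans \cite{BK,VS1,SeSt2} et adapt\'ee au cas modulaire dans \cite{MSt}, pour ramener la question \`a un calcul de r\'eduction mod~$\ell$ sur un groupe fini. Par cette th\'eorie, toute $\qlb$-repr\'esentation irr\'eductible cuspidale $\pi$ de $\G$ s'\'ecrit $\pi \simeq \ind_\J^\G \Lambda$, o\`u $(\J,\Lambda)$ est un type simple maximal \'etendu~: le groupe $\J$ contient et normalise son unique sous-groupe compact maximal $\J^0$, le quotient $\J/\J^0$ est isomorphe \`a $\ZZ$, et la restriction $\Lambda|_{\J^0}$ est de la forme $\kappa\otimes\tau$, o\`u $\kappa$ est une $\beta$-extension et $\tau$ l'inflation \`a $\J^0$ d'une repr\'esentation irr\'eductible cuspidale $\overline{\tau}$ d'un groupe lin\'eaire fini convenable.

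Je commencerais par v\'erifier que l'int\'egralit\'e de $\pi$ se transf\`ere \`a $\Lambda$~: la restriction $\Lambda|_{\J^0}$ est enti\`ere en tant que repr\'esentation d'un groupe compact, et l'int\'egralit\'e de $\Lambda$ comme repr\'esentation de $\J$ revient alors \`a celle de son caract\`ere central, qui d\'ecoule de celle de $\pi$. La r\'eduction $\rl(\Lambda)$ est donc bien d\'efinie comme $\flb$-repr\'esentation semi-simple de $\J$, et sa restriction \`a $\J^0$ s'identifie \`a $\rl(\kappa)\otimes\rl(\overline{\tau})$. Puisque $\kappa$ a une r\'eduction mod~$\ell$ irr\'eductible (propri\'et\'e classique des $\beta$-extensions), la d\'ecomposition de $\rl(\Lambda)$ est gouvern\'ee par celle de $\rl(\overline{\tau})$~; par la th\'eorie de la r\'eduction mod~$\ell$ des cuspidales des groupes lin\'eaires finis (Dipper-James, Vign\'eras), $\rl(\overline{\tau})$ est semi-simple, et cette d\'ecomposition se rel\`eve \`a $\J$ en une d\'ecomposition de $\rl(\Lambda)$ en composantes tordues les unes des autres par des caract\`eres de $\J/\J^0$.

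En appliquant l'induite compacte et la $\flb$-th\'eorie des types simples de \cite{MSt}, on obtient que chaque composante irr\'eductible de $\rl(\Lambda)$ s'induit en une $\flb$-repr\'esentation irr\'eductible cuspidale de $\G$, ce qui fournit la semi-simplicit\'e voulue de $\rl(\pi)$. Le point crucial est la v\'erification que le tordu par un caract\`ere de $\J/\J^0$ intervenant dans la d\'ecomposition correspond au tordu par une puissance de $\nu$ au niveau de $\G$~: cette compatibilit\'e repose sur le fait que $\nu|_{\J^0}$ est trivial (de sorte que $\nu|_\J$ se factorise \`a travers $\J/\J^0$) et sur l'analyse explicite de l'action des caract\`eres non ramifi\'es de $\G$ sur les types. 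Une fois ce point acquis, la forme pr\'ecise \eqref{redcuspeq} en r\'esulte.

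Pour l'assertion (2), l'entier $a$ s'interpr\`ete comme la longueur de l'orbite de $\rho$ sous l'action du tordu par $\nu$. Par un th\'eor\`eme de Dipper-James appliqu\'e au groupe fini sous-jacent, cette longueur vaut~$1$ ou est de la forme $\epsilon(\rho)\ell^u$ pour un entier $u\>0$. L'obstacle principal sera la v\'erification pr\'ecise de la correspondance entre les tordues par caract\`eres de $\J/\J^0$ au niveau du type et les tordues par $\nu$ au niveau de $\G$ mentionn\'ee ci-dessus, qui exige une analyse d\'etaill\'ee de l'action des caract\`eres non ramifi\'es de~$\G$ sur les types simples \'etendus.
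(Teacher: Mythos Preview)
The paper itself does not argue this proposition: it simply cites \cite{MSt} Th\'eor\`eme 3.15 for (1) and \cite{MSt} Lemme 3.19 with \cite{MSjl} 3.3 for (2). Your sketch attempts to unpack those references, and the overall strategy via extended maximal simple types is the right one, but you misidentify the source of the decomposition \eqref{redcuspeq}. You claim it is governed by the decomposition of $\rl(\overline{\tau})$, the reduction mod $\ell$ of the cuspidal representation of the underlying finite linear group. But $\rl(\overline{\tau})$ is \emph{irreducible}: this is a classical fact (see \cite{Vigb} III.2.9, used later in this very paper in the proof of Lemma \ref{l1351}). Hence $\rl(\Lambda)|_{\J^0}=\rl(\kappa)\otimes\rl(\overline\tau)$ is irreducible, so $\rl(\Lambda)$ is irreducible as a representation of $\J$, and your proposed mechanism forces $a=1$ in every case, which is false.

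The actual mechanism is the growth of the normalizer under reduction. Your group $\J$ is the normalizer in $\G$ of the $\qlb$-type $\widetilde\lambda^0=\Lambda|_{\J^0}$; its reduction $\lambda^0=\rl(\widetilde\lambda^0)$ has a normalizer $\J'\supseteq\J$ in $\G$, possibly strictly larger because distinct $\qlb$-cuspidals of the finite group can reduce to the same $\flb$-cuspidal (equivalently, the Galois-stabilizer of the Green parameter can grow mod $\ell$). Then $\rl(\pi)$ is the semisimplification of $\ind_{\J'}^{\G}\bigl(\ind_{\J}^{\J'}(\rl(\Lambda))\bigr)$; the inner induction has semisimple length $a=[\J':\J]$, its constituents differ by characters of the cyclic group $\J'/\J$, and those characters are precisely the restrictions of the $\nu^j$ for $0\le j<a$. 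The dichotomy in (2) then follows from the arithmetic of this index inside the cyclic group $\mathscr{N}_\G(\J^0)/\F^\times\J^0$, by comparing stabilizers of Green parameters over $\qlb$ and over $\flb$; it is not a Dipper--James statement about the finite group as you suggest.
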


\begin{proof}
La première partie est donnée par \cite{MSt} Theorème 3.15,
la seconde est donnée par \cite{MSt}~Lem\-me 3.19 et \cite{MSjl} 3.3.
\end{proof}

D'après \cite{MSjl} 3.3,
on a une autre description de $\epsilon(\pi)$.
Si $t(\pi)$ désigne le \textit{nombre de torsion}~de $\pi$, 
\ie le nombre de caractères non ramifiés $\chi$ de $\G$ tels que~$\pi$
soit isomorphe à $\pi\chi$,
et si $q$ est le cardinal~du corps résiduel de $\F$, alors~:
\begin{equation}
\label{defepstor}
\epsilon(\pi) = \text{ordre de $q^{t(\pi)}$ mod $\ell$}.
\end{equation}

Afin d'affiner la description de l'entier $a$
apparaissant dans \eqref{redcuspeq},
il nous faut~in\-troduire~des in\-variants~sup\-plé\-mentaires associés à la
représentation cuspidale $\pi$,
ce que nous faisons dans les paragraphes sui\-vants, à com\-mencer par le cas où 
$\pi$ est de niveau $0$. 

\subsection{}
\label{consnxi3}

Soit $\pi$ une représentation cuspidale de $\G$ de niveau $0$,
\ie que l'espace de ses~vecteurs invariants par le pro-$p$-sous-groupe 
$\N^1=1+\M_m(\p_\D)$ est non~nul.
D'après \cite{MSt}~Paragraphe~3.2, le~groupe~compact
maximal $\N^0=\GL_m(\Oo_\D)$ agit sur cet espace
\textit{via} une représenta\-tion du~groupe $\GL_m(\kk_\D)$,
de~di\-men\-sion finie car $\pi$ est admissible (\cite{Vigb} II.2.8),
et contenant une sous-représen\-tation irré\-ducti\-ble cuspidale~$\s$.
Selon~\cite{MSc}~Pa\-ra\-graphe 6.1,
on a le fait suivant.

\begin{enonce}{Fait}
\label{bitcoin0}
La représentation $\pi$ est supercuspidale si et seulement si 
$\s$ est supercuspidale.
\end{enonce}

Notons $\xi^0$ l'inflation de $\s$ à $\N^0$,
et $\N$ le normalisateur de la~clas\-se d'iso\-morphisme de $\xi^0$
dans $\G$.
D'après \cite{MSt} Paragraphe 3.1,
il y~a~un unique prolongement~de $\xi^0$ à $\N$, noté $\xi$,
tel que l'induite compacte de $\xi$ à $\G$ soit iso\-mor\-phe~à~$\pi$.

Soit $\mathscr{N}_\G(\N^0)$
le normalisateur de $\N^0$ dans $\G$,
qui est engendré par $\N^0$ et l'uniformisante $\w_\D$.
Le groupe $\N$ est compact mod le centre de $\G$ et on a~:
\begin{equation*}
\F^\times\N^0 \subseteq \N \subseteq \mathscr{N}_\G(\N^0).
\end{equation*}
D'après \cite{MSt} Lemme~3.2,
on a le fait suivant.

\begin{enonce}{Fait}
\label{bitcoin5}
La représentation de $\N$ sur l'espace des vecteurs
$\N^1$-invariants de $\pi$ 
est~iso\-mor\-phe~à~la somme directe~des
conjugués de $\xi$ sous $\mathscr{N}_\G(\N^0)$.
\end{enonce}

Notons $b=b(\pi)$ le nombre de conjugués de $\xi$ sous $\N$,
\ie l'indice de $\N$ dans $\mathscr{N}_\G(\N^0)$,
et $s=s(\pi)$ l'in\-di\-ce de~$\F^\times\N^0$ dans~$\N$,
deux entiers dont le produit vaut $d$.
On a~:
\begin{equation}
\label{genN}
\N = \langle \N^0,\w\rangle, \quad \w=\w_\D^{b}.
\end{equation} 
L'action de $\w_\D$~par con\-jugaison sur $\Oo_\D$ induit un
automorphisme de $\kk_\D$,
engendrant le groupe de~Galois $\Gal(\kk_\D/\kk_\F)$. 
Ainsi $b$ est le nombre de conjugués~de~$\s$ sous 
$\Gal(\kk_\D/\kk_\F)$
et $s$ est~l'ordre 
du stabilisateur de la classe d'isomorphisme de $\s$
dans $\Gal(\kk_\D/\kk_\F)$.
Ces deux entiers sont~indé\-pen\-dants du choix de $\s$.

Profitons-en pour introduire la définition sui\-van\-te,
qui nous sera utile dans la section \ref{sec7}.

\begin{defi}
\label{hasse}
L'\textit{invariant de Hasse} de $\D$ est l'unique entier $h\in\{1,\dots,d\}$
premier à $d$ tel que le $\kk_\F$-automorphisme de $\kk_\D$ induit par 
la conjugaison par $\w_\D$ soit égal à~:
\begin{equation*}
x \mapsto x^{q^h}
\end{equation*}
où $q$ est le cardinal de $\kk_\F$.
Il est indépendant du choix de $\w_\D$.
\end{defi}

\subsection{}
\label{flapflip}

Supposons maintenant que $\pi$ soit une représentation cuspidale
de niveau quelconque de~$\G$.~Il y~a,
d'après \cite{BK,SeSt1,MSt},
un sous-groupe ouvert compact $\J^{0}$ de $\G$
et une~re\-présen\-tation irréduc\-ti\-ble $\bl$ de $\J^{0}$
possédant les propriétés suivantes~:
\begin{enumerate}
\item 
Les représentations irréductibles de $\G$ dont la restriction à $\J^{0}$
contient $\bl$ sont exacte\-ment les représentations cuspidales de 
$\G$ inertiellement équivalentes à $\pi$.
\item 
Le groupe $\J^{0}$ a un unique pro-$p$-sous-groupe distingué maximal $\J^1$,
et la restriction de $\bl$ à $\J^1$ est un multiple d'une représentation
irréductible $\bn$. 
\item
La représentation $\n$ se prolonge en une représentation $\bk$ de $\J^{0}$,
et il y a une représentation irréductible $\bs$
de $\J^{0}$ triviale sur $\J^1$ telle que $\bl$ soit isomorphe à
$\bk\otimes\bs$.
\item
Il y a une extension finie $\E$ de $\F$ dans $\Mat_m(\D)$ telle que~: 
\begin{enumerate}
\item 
si $\B$ est le centralisateur de $\E$ dans $\Mat_m(\D)$,
alors $\J^{0}$ est égal à $(\J^{0}\cap\B^\times)\J^1$ et il existe un entier $r\>1$,
une $\E$-algèbre à division centrale $\C$ de degré réduit $c$ et 
un isomorphisme de $\E$-algèbres~:
\begin{equation}
\label{melon}
\phi:\B\simeq\Mat_r(\C),
\quad
rc = \frac{md} {[\E:\F]},
\quad
c = \frac{d}{(d, [\E:\F])},
\end{equation}
envoyant $\J^{0}\cap\B^\times$ sur le sous-groupe compact maximal standard
$\GL_r(\Oo_\C)$ et $\J^1\cap\B^\times$ sur~son unique
pro-$p$-sous-groupe distingué maximal $1+\Mat_r(\p_\C)$, 
\item
si $\ll$ est le corps résiduel de $\C$, 
et si l'on identifie le groupe~:
\begin{equation}
\label{melao}
\J^{0}/\J^1\simeq(\J^{0}\cap\B^\times)/(\J^1\cap\B^\times) 
\end{equation}
à $\GL_r(\ll)$
via un isomorphisme \eqref{melon}, 
la représentation $\bs$ est l'infla\-tion~d'une représentation 
cuspi\-dale $\s$ de $\GL_r(\ll)$, et
\item
la représentation de $\GL_r(\ll)$ sur l'espace $\Hom_{\J^1}(\n,\pi)$
définie par~:
\begin{equation*}
g\cdot f=\pi(g)\circ f\circ\k(g)^{-1},
\quad
g\in\J^0,
\quad
f\in \Hom_{\J^1}(\n,\pi),
\end{equation*}
est isomorphe à la somme directe des conjugués
de $\s$ sous $\Gal(\ll/\kk_\E)$.
\end{enumerate}
\end{enumerate}

De façon analogue au fait \ref{bitcoin0},
on a le fait suivant (\cite{MSc}~Pa\-ra\-graphe 6.1).

\begin{enonce}{Fait}
\label{bitcoin}
La représentation $\pi$ est supercuspidale si et seulement si 
$\s$ est supercuspidale.
\end{enonce}

Comme au paragraphe \ref{consnxi3}, on note $s(\pi)$
l'ordre du stabilisateur dans $\Gal(\ll/\kk_\E)$
de la classe d'isomorphisme de $\s$.
On définit aussi~:
\begin{equation}
\label{matchaa}
f(\pi) = \frac {n} {e_{\E/\F}},
\quad
q(\pi) = q^{f(\pi)}.
\end{equation}
D'après \cite{MSt} (3.6), on a la propriété suivante, liant les invariants
$s(\pi)$, $f(\pi)$ et le nombre~de~tor\-sion $t(\pi)$ défini au paragraphe 
\ref{parred}.
On note $\ell$ l'exposant caractéristique de $\R$.

\begin{enonce}{Fait}
\label{matchab}
Il y a un entier $v\>0$ tel que $f(\pi)=t(\pi)s(\pi)\ell^v$.
\end{enonce}

Comme en niveau $0$, il existe un unique prolongement $\l$
de $\bl$ au normalisateur dans $\G$ de sa classe d'isomorphisme,
dont l'induite compacte à $\G$ soit isomorphe à $\pi$, 
mais nous n'utiliserons pas ce fait. 

\begin{rema}
\label{etiennelousteau}
Si $\pi$ est de niveau $0$,
on a $\E=\F$, $\J^{0}=\N^0$, $\J^1=\N^1$ et $\bl=\xi^0$
avec~les~no\-tations du paragraphe \ref{consnxi3}.
L'entier $f(\pi)$ est donc simplement égal à $n$ dans ce cas,
et le nombre de torsion $t(\pi)$,
premier à $\ell$ par définition,
est le plus grand diviseur de $mb(\pi)$ premier à $\ell$.
\end{rema}

\subsection{}

Il sera utile de préciser les faits \ref{bitcoin0} et \ref{bitcoin}. 
Le résultat suivant vient de \cite{MSc} Section 6~(et 
en particulier de \cite{MSc} Théorème 6.14).

\begin{prop} 
\label{cuspscusp5}
Soit $\pi$ une représentation irréductible cuspidale de $\G$.
\begin{enumerate}
\item 
Il y a un unique diviseur $k=k(\pi)$ de $m$ et une représentation
irréductible supercuspidale $\rho$ de $\GL_{m/k}(\D)$ tels que~:
\begin{equation}
\label{cuspscusp5eq}
\scusp(\pi) = \rho+\rho\nu+\dots+\rho\nu^{k-1}.
\end{equation}
\item 
L'induite parabolique $\rho\times \rho\nu\times\dots\times\rho\nu^{k-1}$
ne contient aucun 
sous-quotient irréductible~cus\-pidal non isomorphe à $\pi$,
et elle contient $\pi$ avec mutiplicité $1$.
\end{enumerate}
\end{prop}

D'après \cite{MSf} Théorèmes 2.4 et 2.5, on a un
analogue fini de la proposition \ref{cuspscusp5}.

\begin{prop}
\label{cuspscusp5fini}
Soit $\s$ une représentation irréductible cuspidale de $\GL_r(\kk_\C)$.
\begin{enumerate}
\item 
Il existe un unique~di\-viseur $k(\s)$ de $m$ et une 
$\flb$-représentation~ir\-ré\-ductible
supercuspi\-dale
$\a$~de~$\GL_{m/k(\s)}(\kk_\D)$, unique~à iso\-morphis\-me~près,
tels que~:
\begin{equation*}
\scusp(\s)=\a+\dots+\a.
\end{equation*}
\item
L'induite parabolique $\a\times\dots\times\a$ ne contient aucun 
sous-quotient irréductible~cus\-pidal non isomorphe à $\s$,
et elle contient $\s$ avec mutiplicité $1$.
\end{enumerate}
\end{prop}

D'après \cite{MSjl} Lemme 3.2, 
on a le fait suivant,
qui généralise les faits \ref{bitcoin0} et \ref{bitcoin}. 

\begin{enonce}{Fait}
\label{bitcoink} 
Si $\pi$ et $\s$ sont comme au paragraphe \ref{flapflip},
alors $k(\pi)=k(\s)$.
\end{enonce}

\subsection{}
\label{paradivs}

Précisons maintenant les résultats du paragraphe \ref{parred}.
Soient $\mu$, $\pi$ des représentations~com\-me
dans la proposition \ref{redcusp}, 
et écrivons le support supercuspidal de $\pi$ sous la forme
\eqref{cuspscusp5eq}.
D'après \cite{MSt} \'Equation (3.9) et
\cite{MSc}~Pro\-po\-sition 6.4,~Co\-rol\-lai\-re~6.12,
on a le fait suivant.

\begin{enonce}{Fait} 
\label{adivs}
\label{formek} 
On a $s(\pi)=as(\mu)$ et $s(\rho)=s(\pi)$. 
\end{enonce}

Le résultat suivant,
que nous n'énonçons que dans le cas où la $\flb$-représentation $\pi$~est 
supercuspidale,
donne une réciproque à la proposition \ref{redcusp}.
  
\begin{prop}
\label{redcuspb}
Soit $\pi$ une $\flb$-représentation supercuspidale de $\G$
et soit un entier $a\>1$.
Pour qu'il y ait une $\qlb$-représentation irréductible cuspidale entière
de $\G$ dont la réduction mod~$\ell$ con\-tienne $\pi$ et soit de longueur $a$, 
il faut et suffit qu'une des conditions suivantes~soit~vé\-ri\-fiée~: 
\begin{enumerate}
\item 
ou bien $a=1$, 
\item
ou bien
$\epsilon(\pi)$ divise $s(\pi)$ et
$a$ est un diviseur de $s(\pi) $ de la forme 
$a=\epsilon(\pi)\ell^u$ pour~un $u\>0$.
\end{enumerate}
\end{prop}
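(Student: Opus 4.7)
The \emph{only if} direction is a direct combination of results already available. By Proposition~\ref{redcusp}, either $a=1$ or $a=\epsilon(\rho)\ell^u$ for some $u\>0$, and by Fact~\ref{adivs}, $a\mid s(\rho)$. Since $\epsilon(\rho)$ is, by \eqref{defepstor}, the multiplicative order of $q^{t(\rho)}$ modulo $\ell$, it is automatically coprime to $\ell$. Consequently, in the case $a>1$, the divisibility $\epsilon(\rho)\ell^u\mid s(\rho)$ together with $\epsilon(\rho)\mid a$ forces $\epsilon(\rho)\mid s(\rho)$.

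For the \emph{if} direction, the idea is to construct $\pi$ by lifting the extended maximal simple type attached to $\rho$. Recall from paragraph~\ref{flapflip} that $\rho$ is the compact induction to $\G$ of an extension of $\bl=\bk\otimes\bs$ to the normalizer of its isomorphism class, where $\bs$ is the inflation of a supercuspidal representation $\sigma$ of $\GL_r(\ll)$. I would produce an integral cuspidal lift whose reduction has length exactly $a$ by choosing an integral $\qlb$-cuspidal representation $\widetilde\sigma$ of $\GL_r(\ll)$ whose reduction modulo $\ell$ is of length $a$ and contains $\sigma$, pairing $\widetilde\sigma$ with a lift $\widetilde\kappa$ of $\bk$ to obtain an integral lift $\widetilde\lambda=\widetilde\kappa\otimes\widetilde\sigma$ of $\bl$ on $\J^0$, extending $\widetilde\lambda$ to the normalizer, and inducing compactly to~$\G$.

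The key input is therefore the finite-group existence of $\widetilde\sigma$. For $a=1$ this is straightforward, since any integral $\qlb$-cuspidal of $\GL_r(\ll)$ lifting $\sigma$ will do, and its existence follows from general facts about representations of $\GL_r$ over a finite field. For $a=\epsilon(\rho)\ell^u>1$ with $a\mid s(\rho)$, this is exactly the finite-group lifting statement of \cite{MSjl}: the integer $s(\rho)$ equals the size of the stabilizer in $\Gal(\ll/\kk_\E)$ of the isomorphism class of $\sigma$, and the admissible lengths of the reduction mod~$\ell$ of an integral $\qlb$-cuspidal of $\GL_r(\ll)$ containing $\sigma$ are precisely those divisors of $s(\rho)$ of the prescribed form $\epsilon(\rho)\ell^u$. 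Granted such $\widetilde\sigma$, the fact that the compact induction to $\G$ of the resulting $\qlb$-type yields a cuspidal integral representation $\pi$ whose reduction $\rl(\pi)$ has length $a$ and contains $\rho$ is a transport statement via the intertwining and induction formulas of \cite{BK,MSt}: the reduction of $\pi$ is controlled by the reduction of $\widetilde\sigma$ and the action of the group generated by the uniformizer used to extend the type.

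The main obstacle is exactly the finite-group input in the previous paragraph, namely the characterization of admissible reduction lengths of integral $\qlb$-cuspidals of $\GL_r(\ll)$ covering a fixed supercuspidal $\sigma$. Once this classical-looking statement is in hand, the passage back to $\G$ via the type-theoretic machinery is routine, and the necessary conditions $\epsilon(\rho)\mid s(\rho)$ and $a\mid s(\rho)$ arise naturally from the Galois orbit structure of $\sigma$ under $\Gal(\ll/\kk_\E)$.
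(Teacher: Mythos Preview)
Your \emph{only if} argument is correct and is essentially how the paper packages it (the paper simply cites \cite{MSc}~Th\'eor\`eme~6.11 and \cite{MSjl}~Proposition~1.6, but the necessary conditions are already available from Proposition~\ref{redcusp} and Fait~\ref{adivs} as you observe).

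There is, however, a genuine gap in your \emph{if} direction. You propose to produce $\pi$ by choosing an integral $\qlb$-cuspidal $\widetilde\sigma$ of $\GL_r(\ll)$ whose reduction mod~$\ell$ \emph{has length $a$} and contains $\sigma$. But this is impossible as soon as $a>1$: since $\rho$ is supercuspidal, Fait~\ref{bitcoin} tells you that $\sigma$ is supercuspidal, and for finite $\GL_r$ over a field of characteristic $p$, any $\qlb$-irreducible whose reduction mod~$\ell$ contains a supercuspidal is cuspidal with \emph{irreducible} reduction (this is exactly the phenomenon recalled at the end of paragraph~\ref{defequi0}, and is the content of \cite{Vigb}~III.2.9). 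So there is no $\widetilde\sigma$ with $\rl(\widetilde\sigma)$ of length $a>1$.

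The length $a$ does not appear at the level of the finite group $\GL_r(\ll)$; it appears when one passes from $\J^0$ to its normalizer in $\G$. Concretely, different lifts $\widetilde\sigma$ of $\sigma$ (all with irreducible reduction) can have different stabilizers under $\Gal(\ll/\kk_\E)$: the stabilizer of $\sigma$ has order $s(\rho)$, while the stabilizer of a given lift $\widetilde\sigma$ may be a proper subgroup. It is the index of the normalizer of the $\qlb$-type $(\K,\mu)$ inside the normalizer $\J$ of the $\flb$-type, i.e.\ the index $(\J:\K)$, that governs the length of $\rl(\pi)$ (compare Lemme~\ref{l1351} and the remark following it in the level-zero setting). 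The existence statement in \cite{MSjl}~Proposition~1.6 is precisely that, for each admissible $a$, one can choose the lift $\widetilde\sigma$ (equivalently, the lift of the associated Deligne--Lusztig character) so that this index equals $a$. Your sketch should therefore replace ``$\widetilde\sigma$ with reduction of length $a$'' by ``$\widetilde\sigma$ with Galois stabilizer of order $s(\rho)/a$'', and explain why such a lift exists --- that is the actual finite-group input.
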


\begin{proof}
Voir \cite{MSc} Theorème 6.11 si $a=1$, 
et \cite{MSjl} Proposition 1.6 si $a\neq1$.
\end{proof}

Comme $\epsilon(\pi)$ est l'ordre de 
$q^{t(\pi)}$ mod $\ell$ d'après \eqref{defepstor},
le fait que $\epsilon(\pi)$~divi\-se $s(\pi)$ équivaut
à~ce que $\ell$ divise $q^{t(\pi)s(\pi)}-1$, 
ce qui,
d'après le fait \ref{matchab}
et compte tenu de \eqref{matchaa}, équivaut à~: 
\begin{equation*}
\label{lemoutonenrage}
\text{$\ell$ divise $q(\pi)-1$.}
\end{equation*}
On en déduit les corollaires suivants. 

\begin{coro}
\label{coromal}
Soit $\pi$ une $\flb$-représentation irréductible supercuspidale de $\G$
et soit~$j\in\ZZ$.
Pour qu'il y ait une $\qlb$-représentation cuspidale
entière de $\G$ dont la
réduction~mod $\ell$~con\-tien\-ne à la fois $\pi$ et $\pi\nu^j$,
il faut et suffit que 
$\pi$ et $\pi\nu^j$ soient isomorphes
ou que $\ell$ divise $q(\pi)-1$.
\end{coro}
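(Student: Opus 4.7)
L'id�e est d'utiliser directement la description explicite de $\rl(\pi)$ fournie par les propositions \ref{redcusp} et \ref{redcuspb}, combin�e avec l'�quivalence $\epsilon(\rho)\,|\,s(\rho)\Leftrightarrow \ell\,|\,q(\rho)-1$ d�j� �tablie dans la discussion qui suit la proposition \ref{redcuspb}. Le corollaire ne devrait ainsi qu'enregistrer l'effet de ces deux propositions sur le probl�me pos�.

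Pour la suffisance, je distinguerais deux cas. Si $\rho\simeq\rho\nu^j$, la proposition \ref{redcuspb}(1) avec $a=1$ fournit une $\qlb$-repr�sentation cuspidale enti�re $\pi$ telle que $\rl(\pi)=\rho$, et l'on a trivialement $\rho\nu^j\in\rl(\pi)$. Si en revanche $\ell$ divise $q(\rho)-1$, alors $\epsilon(\rho)$ divise $s(\rho)$, et la proposition \ref{redcuspb}(2) appliqu�e avec $u=0$ et $a=\epsilon(\rho)$ fournit une $\pi$ telle que
\begin{equation*}
\rl(\pi) = \rho\oplus\rho\nu\oplus\dots\oplus\rho\nu^{\epsilon(\rho)-1}.
\end{equation*}
Comme $\rho\nu^{\epsilon(\rho)}\simeq\rho$ par d�finition de $\epsilon(\rho)$, la repr�sentation $\rho\nu^j$ est isomorphe � $\rho\nu^i$ pour $i$ le reste de la division euclidienne de $j$ par $\epsilon(\rho)$, donc figure dans $\rl(\pi)$.

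Pour la n�cessit�, je partirais d'une $\qlb$-repr�sentation cuspidale enti�re $\pi$ dont la r�duction contient � la fois $\rho$ et $\rho\nu^j$. Par la proposition \ref{redcusp}(1), il existe une $\flb$-repr�sen\-tation cuspidale $\tilde\rho$ et un entier $a\>1$ tels que $\rl(\pi)=\tilde\rho\oplus\tilde\rho\nu\oplus\dots\oplus\tilde\rho\nu^{a-1}$. L'hy\-po\-th�se que $\rho$ supercuspidale appara�t dans $\rl(\pi)$ permet d'appliquer la proposition \ref{redcuspb} avec ce $\rho$, d'o� deux alternatives : ou bien $a=1$, auquel cas $\rl(\pi)$ ne contient qu'une seule classe d'iso\-mor\-phisme de constituant irr�ductible, donc la pr�sence de $\rho$ et de $\rho\nu^j$ force $\rho\simeq\rho\nu^j$ ; ou bien $a>1$, auquel cas la m�me proposition impose $\epsilon(\rho)\,|\,s(\rho)$, ce qui �quivaut � $\ell\,|\,q(\rho)-1$ comme rappel� avant l'�nonc�.

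Le principal point de vigilance sera de ne pas confondre la repr�sentation $\tilde\rho$ fournie abstrai\-tement par la proposition \ref{redcusp} avec la repr�sentation $\rho$ impos�e par l'�nonc�, et de v�rifier que l'on peut l�gitimement appliquer la proposition \ref{redcuspb} au couple $(\rho,\pi)$ d�s que $\rho$ apparait dans $\rl(\pi)$ ; mais c'est pr�cis�ment le cadre d'application de cette proposition, et aucune difficult� technique suppl�mentaire n'est � pr�voir.
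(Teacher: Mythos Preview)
Votre plan est correct et suit essentiellement la m�me strat�gie que celle de l'article : la suffisance est trait�e de mani�re identique (proposition~\ref{redcuspb} avec $a=1$ ou $a=\epsilon(\rho)$ selon le cas), et la n�cessit� repose sur la m�me dichotomie $a=1$ contre $a>1$. La seule nuance est que, pour la n�cessit�, l'article invoque la proposition~\ref{redcusp}(2) combin�e au fait~\ref{adivs} (si $\rho\not\simeq\rho\nu^j$ alors $a$ est multiple de $\epsilon(\rho)$, et $a\mid s(\rho)$ donne $\epsilon(\rho)\mid s(\rho)$), tandis que vous utilisez directement le sens n�cessaire de la proposition~\ref{redcuspb}, ce qui est l�g�rement plus �conomique et parfaitement l�gitime puisque cette proposition encapsule d�j� ces deux ingr�dients.
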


\begin{proof}
La condition est nécessaire~:
si une telle représentation existe,
et si 
$\pi$ et~$\pi\nu^j$ ne sont pas isomorphes,
la proposition \ref{redcusp} assure que~la~lon\-gueur $a$ de sa réduction
mod $\ell$ est un multiple de $\epsilon(\pi)$, et le résultat suit
du fait \ref{adivs}.

Inversement, 
si les représentations $\pi$ et~$\pi\nu^j$ sont isomorphes,
le résultat suit de la proposition \ref{redcuspb} appliquée avec $a=1$.
Si $\pi$ et $\pi\nu^j$ ne sont pas isomorphes
mais $\epsilon(\pi)$ divise $s(\pi)$, 
le résultat suit 
de la proposition \ref{redcuspb} appliquée avec $a=\epsilon(\pi)$.
\end{proof}

\begin{coro}
\label{coromal2q3}
Soit $\pi$ une $\flb$-représentation supercuspidale de $\G$. 
\begin{enumerate}
\item 
Si $\ell$~divi\-se $q(\pi)-1$,
alors, pour tout $j\in\ZZ$, les représentations $\pi$ et
$\pi\nu^j$ sont dépendantes. 
\item 
Supposons que $\ell$~ne divi\-se pas $q(\pi)-1$,
et soit $\mu$ une $\qlb$-représentation cuspidale entière de $\G$
dont la~ré\-duc\-tion mod $\ell$ contient $\pi$.
Alors $\rl(\mu)$~est irréductible,
isomorphe à $\pi$.
\end{enumerate}
\end{coro}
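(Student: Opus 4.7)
Mon plan est de d�duire les deux parties directement de la proposition \ref{redcusp}, du corollaire \ref{coromal} et du lemme \ref{lemdep}. Aucune construction nouvelle n'est requise : il s'agit de combiner les r�sultats d�j� �tablis. Le point pivot est l'�quivalence, rappel�e juste avant l'�nonc� et obtenue en combinant le fait \ref{matchab}, la formule \eqref{defepstor} et la d�finition \eqref{matchaa}, entre les propri�t�s ``$\epsilon(\rho)$ divise $s(\rho)$'' et ``$\ell$ divise $q(\rho)-1$''. Une fois cette �quivalence en main, les deux parties s'obtiennent chacune en quelques lignes.

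Pour la partie (1), je fixe $j\in\ZZ$. Puisque $\ell$ divise $q(\rho)-1$, le corollaire \ref{coromal} fournit une $\qlb$-repr�sentation cuspidale enti�re $\pi$ de $\G$ dont la r�duction mod $\ell$ contient � la fois $\rho$ et $\rho\nu^j$. Si $\rho\nu^j\simeq\rho$, la d�pendance est triviale ; sinon, on dispose de deux repr�sentations irr�ductibles non isomorphes apparaissant dans $\rl(\pi)$, et le lemme \ref{lemdep} conclut.

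Pour la partie (2), je pars de la d�composition fournie par la proposition \ref{redcusp} :
\begin{equation*}
\rl(\pi)=\rho\oplus\rho\nu\oplus\dots\oplus\rho\nu^{a-1},
\end{equation*}
avec $a=1$ ou $a=\epsilon(\rho)\ell^u$ pour un $u\>0$. Supposons par l'absurde que $a\neq 1$ : alors $\epsilon(\rho)$ divise $a$, et le fait \ref{adivs} donne $a\mid s(\rho)$ ; par transitivit�, $\epsilon(\rho)$ divise $s(\rho)$, ce qui, par l'�quivalence rappel�e ci-dessus, contredit l'hypoth�se que $\ell$ ne divise pas $q(\rho)-1$. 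Il reste donc $a=1$, \ie $\rl(\pi)=\rho$ est irr�ductible.

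L'argument ne pr�sente pas d'obstacle technique majeur ; toute la difficult� a d�j� �t� absorb�e dans les propositions \ref{redcusp} et \ref{redcuspb} et dans le lemme \ref{lemdep}. Le seul point auquel il faut faire attention est de bien utiliser dans les deux sens l'�quivalence entre ``divisibilit� arithm�tique des invariants $\epsilon(\rho)$, $s(\rho)$'' et ``divisibilit� de $q(\rho)-1$ par $\ell$'', qui sert ici d'interpr�tation num�rique commode de la condition d�terminant la longueur de la r�duction mod $\ell$.
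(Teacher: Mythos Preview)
Your proof is correct and follows essentially the same approach as the paper's own proof: part (1) is deduced from the corollaire \ref{coromal} combined with the lemme \ref{lemdep}, and part (2) from the proposition \ref{redcusp}(2) combined with the fait \ref{adivs}, exactly as in the paper. Your write-up is simply a bit more detailed in unpacking the contradiction for part (2).
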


\begin{proof}
Le point (1) est une conséquence du corollaire \ref{coromal} 
et du lemme \ref{lemdep}.
Le~point (2) est une conséquence de la proposition \ref{redcusp}(2)
et du fait \ref{adivs}.
\end{proof}

\begin{rema}
D'après \cite{MSt} Proposition 4.40 et Lemme 4.45,
$\ell$~divi\-se $q(\pi)-1$ si et seu\-le\-ment si
l'induite parabolique $\pi\times\pi$ est réductible.
\end{rema}

\subsection{}
\label{defbepi}

Avant de mettre fin à cette section,
discutons de la réduction mod $\ell$ d'une $\qlb$-représentation irréductible 
entière quelconque de $\G$.
Introduisons la définition suivante. 

\begin{defi}
\label{Notation C(pi)}
Soit $\pi$ une $\flb$-représentation irréductible de $\G$,
dont on écrit~: 
\begin{equation*}
\scusp(\pi) = \rho_1+\dots+\rho_r
\end{equation*}
le support supercuspidal.
On note $\B(\pi)$ l'ensemble des représentations irréductibles $\pi'$
de $\G$~dont le support supercuspidal est de la forme~: 
\begin{equation*}
\scusp(\pi') = \rho_1\nu^{j_1}+\dots+\rho_r\nu^{j_r},
\quad
j_1,\dots,j_r\in\ZZ,
\end{equation*}
où, pour chaque $k=1,\dots,r$, l'entier $j_k$ est nul 
si $\ell$ ne divise pas $q(\rho_k)-1$.
\end{defi}

\begin{rema}
\label{Bsingleton}
Pour que $\B(\pi)$ contienne des représentations de supports 
supercuspidaux différents, 
il faut et suffit donc qu'il y ait un $k\in\{1,\dots,r\}$ tel que
$\epsilon(\rho_k)\neq1$ et $\ell$ divise $q(\rho_k)-1$.

Si $\G$ est déployé,
\ie si $d$ est égal à $1$,
cela ne se produit jamais~:
on a $s(\rho_k)=1$ pour tout $k$,
donc le fait que $\ell$ divise $q(\rho_k)-1$
implique que $\epsilon(\rho_k)=1$.

Si $\pi$ est supercuspidale de niveau $0$,
cela se produit si $\ell$ divise $q^n-1$ mais pas
$q^{n/s(\rho)}-1$,
sachant que $s(\rho)$ divise $d$ et est premier à $m$
(voir \cite{MSjl} Corollaire 3.9).
En particulier,
pour~le~ca\-ractère~trivial de $\D^\times$,
cela se produit si $\ell$ divise $q^d-1$ mais pas $q-1$.
\end{rema}

La relation $\pi'\in\B(\pi)$
équivaut à $\B(\pi')=\B(\pi)$.
C'est une relation d'équivalence
sur l'ensemble des $\flb$-représentations~irré\-duc\-ti\-bles de $\G$.

\begin{prop}
\label{blocindec}
Soient $\pi$ et $\pi'$ des $\flb$-représentations~ir\-réductibles de $\G$. 
Pour que~les~en\-sem\-bles $\B(\pi)$ et $\B(\pi')$ soient égaux, 
il faut et suffit qu'il existe une $\qlb$-représentation 
irréductible en\-tière de $\G$ 
dont la réduction mod $\ell$ contienne à la fois $\pi$ et $\pi'$.
\end{prop}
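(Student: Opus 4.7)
I prove the two implications separately. The $(\Leftarrow)$ direction follows from Proposition \ref{redcusp} combined with the fact that parabolic induction commutes with reduction mod $\ell$; the converse is the main content and rests on a genericity argument.

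\emph{Direction $(\Leftarrow)$.} Given an integral $\qlb$-irreducible $\tilde\pi$ with $\pi,\pi'\in\rl(\tilde\pi)$, I would write $\scusp(\tilde\pi)=\tilde\rho_1+\dots+\tilde\rho_r$. An argument via central characters forces each $\tilde\rho_k$ to be integral, so Proposition \ref{redcusp} yields
\[
\rl(\tilde\rho_k) \;=\; \rho_k \oplus \rho_k\nu \oplus \dots \oplus \rho_k\nu^{a_k-1}
\]
for some $\flb$-cuspidal $\rho_k$ and some $a_k\geq1$. Commutation of parabolic induction with reduction mod $\ell$ forces the supercuspidal support of every irreducible constituent of $\rl(\tilde\pi)$ to be of the form $\rho_1\nu^{i_1}+\dots+\rho_r\nu^{i_r}$ with $i_k\in\{0,\dots,a_k-1\}$; note that, since the $\rho_k$ occur as twists of supercuspidals entering $\scusp(\pi)$, they are themselves supercuspidal. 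After identifying $\scusp(\pi)=\rho_1+\dots+\rho_r$, I would conclude that $\scusp(\pi')=\rho_1\nu^{j_1}+\dots+\rho_r\nu^{j_r}$ with $j_k\in\{0,\dots,a_k-1\}$. Whenever $j_k\neq0$ one has $a_k\geq2$, and Proposition \ref{redcusp}(2) gives $\epsilon(\rho_k)\mid a_k\mid s(\rho_k)$, which by the equivalence recalled just after Proposition \ref{redcuspb} amounts to $\ell\mid q(\rho_k)-1$. Hence $\pi'\in\B(\pi)$.

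\emph{Direction $(\Rightarrow)$.} Assuming $\B(\pi)=\B(\pi')$ and writing $\scusp(\pi)=\rho_1+\dots+\rho_r$, $\scusp(\pi')=\rho_1\nu^{j_1}+\dots+\rho_r\nu^{j_r}$ with $j_k=0$ whenever $\ell\nmid q(\rho_k)-1$, I would apply Corollary \ref{coromal} index by index to produce integral $\qlb$-cuspidal representations $\tilde\rho_k$ whose reduction mod $\ell$ contains both $\rho_k$ and $\rho_k\nu^{j_k}$ (the hypothesis $\ell\mid q(\rho_k)-1$ when $j_k\neq0$ ensures this is possible). The parabolic induction $\tilde\Pi=\tilde\rho_1\times\dots\times\tilde\rho_r$ is integral of finite length, and the identity $\rl(\tilde\Pi)=\rl(\tilde\rho_1)\times\dots\times\rl(\tilde\rho_r)$ shows that both $\rho_1\times\dots\times\rho_r$ and $\rho_1\nu^{j_1}\times\dots\times\rho_r\nu^{j_r}$ are subquotients of $\rl(\tilde\Pi)$; hence so are $\pi$ and $\pi'$.

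The remaining step, and the main obstacle, is to produce a \emph{single} irreducible $\qlb$-subquotient of $\tilde\Pi$ whose reduction mod $\ell$ contains both $\pi$ and $\pi'$. My plan is to exploit the freedom available in the construction of the $\tilde\rho_k$: the set of integral $\qlb$-cuspidal lifts of a given $\rho_k$ with prescribed reduction forms a torsor under the group of unramified $\qlb$-characters reducing trivially mod $\ell$, so one can move each $\tilde\rho_k$ within its family without altering its mod $\ell$ reduction. By a generic perturbation inside these torsors, one should be able to arrange that the $\tilde\rho_k$ are pairwise unlinked over $\qlb$, in which case $\tilde\Pi$ is itself irreducible over $\qlb$ and serves as the sought-after common integral lift. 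Verifying that such a generic choice is always available, while respecting the constraints on the reductions (in particular when several $\rho_k$ are isomorphic up to unramified twist), is where I expect the bulk of the technical work to lie.
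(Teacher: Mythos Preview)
Your plan for the $(\Rightarrow)$ direction is exactly the paper's: one applies Corollary~\ref{coromal} coordinatewise, then twists each lift $\tilde\rho_k$ by an unramified character with values in $1+\mathfrak{m}$ (so the reduction is unaffected), and finally uses the pairwise irreducibility criterion for products of cuspidals (\cite{MSc} Corollaire 7.32 and \cite{VSU0} Section 4) to see, by induction on $r$, that only finitely many choices of the last twist are forbidden while $1+\mathfrak{m}$ is infinite. So your ``generic perturbation'' does go through, and the concern you flag about repeated inertial classes is handled precisely by this finiteness-versus-infiniteness count.

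The $(\Leftarrow)$ direction, however, contains a genuine gap. Proposition~\ref{redcusp} only tells you that the $\rho_k$ appearing in $\rl(\tilde\rho_k)=\rho_k\oplus\rho_k\nu\oplus\dots$ are \emph{cuspidal}; they need not be supercuspidal. Hence commutation of parabolic induction with reduction mod $\ell$ shows that every irreducible constituent of $\rl(\tilde\pi)$ is a subquotient of some $\rho_1\nu^{i_1}\times\dots\times\rho_r\nu^{i_r}$, whence its supercuspidal support is $\scusp(\rho_1\nu^{i_1})+\dots+\scusp(\rho_r\nu^{i_r})$, \emph{not} $\rho_1\nu^{i_1}+\dots+\rho_r\nu^{i_r}$ in general. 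Your justification ``since the $\rho_k$ occur as twists of supercuspidals entering $\scusp(\pi)$'' presupposes the very identification you are trying to establish, so the argument is circular. The paper's fix is to go one layer deeper: writing $\tau_k$ for your cuspidal $\rho_k$, \cite{MSc} Th\'eor\`eme 6.14 gives $\scusp(\tau_k)=\rho_k+\rho_k\nu+\dots+\rho_k\nu^{r_k-1}$ for a genuinely supercuspidal $\rho_k$ on a smaller group, and then
\[
\scusp(\pi)=\sum_{k=1}^{r}\sum_{j=0}^{r_k-1}\rho_k\nu^{j+c_k},\qquad
\scusp(\pi')=\sum_{k=1}^{r}\sum_{j=0}^{r_k-1}\rho_k\nu^{j+c'_k},
\]
with $c_k,c'_k\in\{0,\dots,a_k-1\}$; when $\ell\nmid q(\rho_k)-1$ one has $a_k=1$, forcing $c_k=c'_k$. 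Once you insert this extra step, your argument becomes complete and agrees with the paper's.
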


\begin{proof}
Supposons d'abord que $\pi'\in\B(\pi)$.
Pour chaque $i=1,\dots,r$,
choisissons une $\qlb$-re\-pré\-sen\-tation cuspidale entière $\mu_i$ de
$\GL_{m_i}(\D)$ dont la réduction mod $\ell$ contienne à la fois $\rho_i$
et $\rho_i\nu^{j_i}$, dont l'existence est assurée par le corollaire
\ref{coromal}.
Soient maintenant $\h_1,\dots,\h_r$ des caractères non ramifiés de
$\GL_{m_1}(\D),\dots, \GL_{m_r}(\D)$ à valeurs dans $1+\mathfrak{m}$
(où $\mathfrak{m}$ est l'idéal~maxi\-mal~de $\zlb$)
et considérons l'induite parabolique~:
\begin{equation}
\label{chifoumi}
\mu_1\h_1 \times \dots \times \mu_r\h_r.
\end{equation}
Par construction, sa réduction mod $\ell$ contient à la fois $\pi$ et $\pi'$,
l'induction parabolique étant~com\-patible à la réduction mod $\ell$
(voir \cite{MSc} 1.2.3 ou \cite{Vigb} II.4.14).
Nous allons prouver que,
pour un choix convenable des $\h_i$,
cette induite est irréductible, ce qu'on va prouver par récurrence~sur~$r$. 

Posons $l=r-1$,
et supposons que $l\>1$ et que l'induite
$\mu_1\h_1 \times \dots \times \mu_{l}\h_{l}$ soit irréductible.
Pour que \eqref{chifoumi}~le soit,
il faut et suffit
(d'après \cite{MSc} Corollaire 7.32 par exemple)
de choisir~$\h_r$ de sorte que,
pour tout $i<r$,~l'in\-duite 
$\mu_i\h_i\times \mu_r\h_r$ soit irréductible,
\ie (voir \cite{VSU0} Section~4)
que $\mu_r\h_r$~n'ap\-partien\-ne~pas à la réunion~:
\begin{equation*}
\left\{\mu_1\h_1\nu^{s(\mu_1)},\mu_1\h_1\nu^{-s(\mu_1)}\right\}
\cup\dots\cup
\left\{\mu_l\h_l\nu^{s(\mu_l)},\mu_l\h_l\nu^{-s(\mu_l)}\right\} 
\end{equation*}
(les entiers $s(\mu_i)$ et $t(\mu_i)$ sont
introduits au paragraphe \ref{parred}), \ie que~:
\begin{equation*}
\label{cond22}
\left(\h_r^{\phantom{1}}\chi_i^{\phantom{1}}\h_i^{-1}\nu^{s(\mu_i)}\right)^{t(\mu_i)}
\neq 1
\quad\text{et}\quad
\left(\h_r^{\phantom{1}}\chi_i^{\phantom{1}}\h_i^{-1}\nu^{-s(\mu_i)}\right)^{t(\mu_i)}
\neq 1
\end{equation*} 
à chaque fois que $\mu_r$ et $\mu_i$ sont inertiellement équivalents, 
$\chi_i$ étant un caractère non ramifié~tel~que $\mu_r$ soit isomorphe à
$\mu_i\chi_i$. 
Cette condition n'interdit qu'un nombre fini de valeurs pour $\h_r$,
alors que $1+\mathfrak{m}$ est infini.

Supposons maintenant que $\pi$ et $\pi'$ apparaissent dans la réduction mod 
$\ell$ d'une $\qlb$-représen\-tation irréductible en\-tière de $\G$, 
dont on note $\mu_1+\dots+\mu_u$ le support cuspidal,
$\mu_i$ étant une~$\qlb$-re\-pré\-sen\-tation cuspidale entière de
$\GL_{n_i}(\D)$ pour chaque $i=1,\dots,u$, 
avec $n_1+\dots+n_u=m$.
Les représentations
$\pi$ et $\pi'$ apparaissent donc dans la réduction mod $\ell$ de 
l'induite parabolique~:
\begin{equation*}
\label{prechifoumi}
\mu_1 \times \dots \times \mu_u. 
\end{equation*}
Pour chaque $i$,
écrivons $\r_\ell(\mu_i)$ sous la forme
$\pi_{i}\oplus \pi_i\nu \oplus \dots\oplus \pi_{i}\nu^{a_i-1}$
d'après la proposition \ref{redcusp},
et écrivons $\scusp(\pi_i)$ sous la forme
$ \rho_i+\rho_i\nu+\dots+\rho_i\nu^{k_i-1}$
d'après la proposition \ref{cuspscusp5}. 
Il y a des
en\-tiers $c_i^{\phantom{'}}$ et $c_i'$ dans $\{0,\dots,a_i-1\}$ tels que~:
\begin{equation*}
\scusp(\pi) = \sum\limits_{i=1}^{u} \sum\limits_{j=0}^{k_i-1}
\rho_i\nu^{j+c_i^{\phantom{'}}},
\quad
\scusp(\pi') = \sum\limits_{i=1}^{u} \sum\limits_{j=0}^{k_i-1} \rho_i\nu^{j+c'_i}.
\end{equation*}
En outre,
si $\ell$ ne divise pas $q(\rho_i)-1$, 
on a $a_i=1$ d'après la proposition \ref{redcuspb},
ce qui entraîne que $c_i'=c_i^{\phantom{'}}$.
Par conséquent, on a $\pi'\in\B(\pi)$.
\end{proof}

Par ailleurs,
le lemme \ref{lemdep} assure que des représentations irréductibles $\pi$, 
$\pi'$ apparaissant~dans la réduction mod $\ell$ d'une $\qlb$-représentation 
irréductible en\-tière de $\G$
--- ou, ce qui est équivalent d'après la proposition \ref{blocindec},
telles que $\pi'\in\B(\pi)$ --- 
sont dépendantes. 
L'objet des sections \ref{sec4} à \ref{sec6} est de prouver la réciproque
(voir le théorème \ref{Prop 3.2.4} et la proposition \ref{blocindecfinal}).

\section{Le cas cuspidal de niveau zéro} 
\label{sec4}

Cette section est consacrée à la preuve du résultat suivant.

\begin{prop}
\label{ext1cuspniveau0}
Soient $\pi$ et $\pi'$ des $\flb$-représentations cuspidales de niveau $0$
de $\G$.
Suppo\-sons que $\Ext^1_\G(\pi,\pi')$ soit non nul.
Alors $\pi'\in\B(\pi)$.
\end{prop}

Le cas des représentations cuspidales de niveau quelconque fera l'objet de la
section suivante. 

\subsection{}
\label{consnxi}

Dans cette section,
on fixe une représentation cuspidale $\pi$ de niveau $0$ de $\G$,
à coefficients~dans un corps algébriquement clos $\R$
de caractéristique~différen\-te de $p$
qu'on suppose quelconque~jus\-qu'au paragraphe \ref{subsection 3.1.2}.
Comme au pa\-ra\-graphe \ref{consnxi3} dont on reprend les notations,
on~fixe~une~paire $(\N,\xi)$ dont l'induite à $\G$ est iso\-mor\-phe à $\pi$.
En particulier,
la restriction de~$\xi$ à~$\N^0$~est~ir\-réduc\-tible,
et est l'inflation d'une représentation cuspidale $\s$ de $\GL_m(\kk_\D)$.
L'ordre du stabilisateur~de $\s$ sous l'action de $\Gal(\kk_\D/\kk_\F)$,
égal à l'indice de $\F^\times\N^0$ dans $\N$,
est noté $s=s(\pi)$.

Par ailleurs, 
d'après la proposition \ref{cuspscusp5}, 
il y a un unique diviseur $k=k(\pi)$ de $m$
et une~repré\-sen\-tation irréductible 
supercuspidale $\rho$~de $\GL_{m/k}(\D)$ 
tels que~:
\begin{equation*}
\scusp(\pi) = \rho + \rho\nu + \dots + \rho\nu^{k-1},
\end{equation*}
et toute représentation cuspidale de $\G$ de même sup\-port su\-per\-cuspidal
que $\pi$ est isomorphe à~$\pi$.

\subsection{}

Prouvons d'abord le lemme général suivant. 
Si $\V$ est une représentation de $\G$,
on notera $\V^{\N^1}$ l'espace de ses vecteurs invariants par $\N^1$.
Le sous-groupe $\N^1$ étant distingué dans $\N$,
cet espace définit une représentation de~$\N$ triviale sur $\N^1$.

\begin{lemm}
\label{decdebase2}
Soit $\V$ une représentation de $\G$. 
Pour tout $i\>0$,
on a un isomorphisme~:
\begin{equation*}
\Ext^i_\G(\pi,\V) \simeq \Ext^i_\N(\xi,\V^{\N^1})
\end{equation*}
de $\R$-espaces vectoriels.
\end{lemm}

\begin{proof}
Notons $\X$ la restriction de $\V$ à $\N$.
Par adjonction (voir \cite{Vigb} A.2),
il y a un iso\-morphisme~:
\begin{equation*}
\Ext^i_\G(\pi,\V) \simeq \Ext^i_\N(\xi,\X) 
\end{equation*}
de $\R$-espaces vectoriels.
Comme $\N^1$ est un pro-$p$-groupe 
et $p$ est inversible dans $\R$, 
il existe~une uni\-que~décomposition $\X=\Y\oplus\Z$,
où $\Y$ est la représentation de $\N$ sur $\V^{\N^1}$ 
et $\Z$ est une~repré\-sentation de $\N$ dont aucun composant irréductible
n'est invariant par $\N^1$. 
On a donc~:
\begin{equation*}
\Ext^i_\N(\xi,\X) \simeq \Ext^i_\N(\xi,\Y) \oplus \Ext^i_\N(\xi,\Z)
\end{equation*}
et le résultat vient de ce que l'espace $\Ext^i_\N(\xi,\Z)$ est nul,
car $\xi$ est trivial sur le pro-$p$-groupe $\N^1$
et aucun composant de $\Z$ ne l'est.
\end{proof}

\subsection{}
\label{floupifloup}

Soit $\pi'$ une représentation irréductible cuspidale de $\G$ telle que 
l'espace $\Ext_\G^i(\pi,\pi')$ soit~non nul pour un $i\>0$.

\begin{lemm}
\label{decdebase25}
Il existe un caractère non ramifié $\chi$ de $\G$ tel que
$\pi'$ soit~iso\-morphe à $\pi\chi$. 
\end{lemm}

\begin{proof}
Le théorème \ref{decbvss} assure que
le support supercuspidal de $\pi$ est inertiel\-lement équivalent à celui de 
$\pi'$.
Si l'on écrit ce dernier sous la forme
$\tau + \tau\nu + \dots + \tau\nu^{l-1}$,
où $l$ est un diviseur de $m$~et~$\tau$ une~représentation irréductible 
supercuspi\-da\-le~de $\GL_{m/l}(\D)$, 
on trouve~que~$l$ est~égal~à~$k$ et que
$\tau$ et $\rho$
sont inertiellement équivalentes.
Il y a donc un caractère non ramifié~$\chi$ de $\G$ tel que $\pi'\chi^{-1}$
et $\pi$ aient le même support supercuspidal.
\end{proof}

Fixons donc un caractère non ramifié $\chi$ de $\G$ tel que
$\Ext^i_{\G}(\pi , \pi \chi)$ soit non nul. 
La~représenta\-tion $\pi$ étant isomorphe à l'induite compacte
de $\xi$ à $\G$, 
il~s'ensuit que $\pi\chi$ est isomorphe à~l'indui\-te~compacte
de $\xi\chi$ à $\G$.

(Les représentations $\pi$ et $\pi\chi$ ayant en outre 
même caractère central
d'après le lemme \ref{lemmecc},
on a même $\chi^n=1$,
mais nous n'aurons pas besoin de cette précision.)

Selon le fait \ref{bitcoin5},
la représentation de $\N$ sur l'espace des vec\-teurs $\N^1$-invariants
de $\pi\chi$~est isomorphe à la somme directe des conjugués
de $\xi\chi$ sous le normalisateur de $\N^0$ dans $\G$.
Appliquant le lemme \ref{decdebase2} à la représentation $\pi\chi$,
on en déduit le résultat suivant. 

\begin{prop}
\label{decdebase3}
Il y a un isomorphisme de $\R$-espaces vectoriels~:
\begin{equation*}
\label{decextpipichi}
\Ext^i_\G(\pi,\pi\chi) \simeq \bigoplus\limits_{u} \Ext^i_\N(\xi,\xi^{u}\chi)
\end{equation*}
où $u$ décrit un système de représentants de $\mathscr{N}_\G(\N^0)$ 
mod $\N$. 
\end{prop}

Nous pouvons donc reformuler la proposition \ref{ext1cuspniveau0}
de la façon suivante~:
si $\chi$ est un $\flb$-caractère non ramifié de $\G$ tel que l'espace 
$\Ext^1_\N(\xi,\xi^{u}\chi)$ soit non nul pour un $u\in\mathscr{N}_\G(\N^0)$,
il y a un~entier $j\in\ZZ$,
qu'on peut prendre égal à $0$ si $\ell$ ne divise pas $q^n-1$,
tel que les caractères $\chi$ et $\nu^j$~coïn\-cident sur~$\N$.
Nous~le prouverons dans les paragraphes \ref{merlinlenchanteur} à 
\ref{utherpendragon},
et nous prouverons au~passa\-ge~(corollaire \ref{decdebase3coro})
qu'un tel $u$ doit être égal à $1$.

Avant d'aller plus loin,
nous allons prouver que~$\xi$ 
admet une en\-ve\-lop\-pe projective. 

\subsection{}
\label{subsection 3.1.2}

Plus généralement, 
nous allons prouver que toute représentation irréductible de $\N$ 
triviale sur $\N^1$ admet une en\-ve\-lop\-pe projective dans $\rep_\R(\N)$.

Soit $\mp$ une représentation irréductible de $\N$ triviale sur $\N^1$.
Une \textit{enve\-lop\-pe projective} de~$\mp$ dans $\rep_\R(\N)$
est une représentation projective $\P$ de cette~caté\-gorie, 
munie d'un mor\-phis\-me~sur\-jec\-tif
$f:\P\to\mp$ trivial sur toute sous-représentation propre de $\P$
(voir par exemple \cite{Vigb} I.A.4).
La représentation $\mp$~étant irréductible,
le noyau de $f$ est alors l'unique 
sous-représentation~propre maximale de $\P$,
de sorte que $f$ est déterminé à un scalaire non nul près. 
Aussi omettra-t-on~la~ré\-férence à $f$
et dira-t-on par abus de langage que $\P$ est une
enveloppe~projective de~$\mp$.~Réci\-pro\-que\-ment,
si $\P$ est une représentation projective de $\rep_\R(\N)$
ayant une unique sous-représen\-ta\-tion~propre maximale $\M$,
et si le quotient de $\P$ par $\M$ est~isomorphe à $\mp$, 
alors $\P$ 
est une~enve\-lop\-pe~projective de $\mp$.
Prouvons d'abord le lemme général suivant. 

\begin{lemm}
\label{decdebase}
Soit $\X$ une représentation de longueur finie de $\N$.
Il y a des sous-repré\-sen\-tations $\X_1$ et $\X_0$ de $\X$,
uniques, telles que $\X=\X_1\oplus\X_0$ et~:
\begin{enumerate}
\item 
la représentation $\X_1$ est invariante par $\langle\N^1,\varpi_\F\rangle$, 
\item
aucun des composants irréductibles de $\X_0$ n'est invariant par 
$\langle\N^1,\varpi_\F\rangle$.  
\end{enumerate}
\end{lemm}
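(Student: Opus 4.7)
The plan is to combine two $\N$-equivariant decompositions of $\X$: first an averaging over the normal pro-$p$ subgroup $\N^1$, then a generalized eigenspace decomposition for the central element $\w_\F$.

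First, I exploit that $\N^1$ is a normal pro-$p$ subgroup of $\N$ and that $\R$ has characteristic different from $p$. The normalized Haar measure on $\N^1$ yields an idempotent $e_{\N^1}$ acting on $\X$: for a smooth vector $v$ fixed by an open subgroup $U \subset \N^1$, one sets $e_{\N^1}(v) = [\N^1 : U]^{-1} \sum_{g \in \N^1/U} g \cdot v$, which is well-defined because $[\N^1:U]$ is a power of $p$, hence invertible in $\R$. Since $\N^1$ is normal in $\N$, the idempotent $e_{\N^1}$ is $\N$-equivariant, and its image and kernel furnish an $\N$-stable decomposition $\X = \X^{\N^1} \oplus \X''$ in which no irreducible subquotient of $\X''$ has trivial $\N^1$-action.

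Next, since $\w_\F \in \F^\times$ lies in the centre of $\N$, it acts on $\X^{\N^1}$ by an $\N$-equivariant endomorphism. As $\X^{\N^1}$ has finite length, this endomorphism takes only finitely many distinct values $\alpha$ as central characters of the composition factors, and its minimal polynomial factors as $\prod_\alpha (T - \alpha)^{n_\alpha}$. The Chinese remainder theorem applied to this factorization yields $\N$-equivariant projectors onto the generalized eigenspaces, producing an $\N$-stable decomposition $\X^{\N^1} = \bigoplus_\alpha Y^{[\alpha]}$.

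I then set $\X_1 = Y^{[1]}$ and $\X_0 = \X'' \oplus \bigoplus_{\alpha \neq 1} Y^{[\alpha]}$. Every irreducible subquotient of $\X_1$ has $\N^1$ acting trivially and $\w_\F$ acting as the scalar $1$, and is therefore invariant by $\langle \N^1, \w_\F \rangle$; every irreducible subquotient of $\X_0$ has either nontrivial $\N^1$-action or $\w_\F$ acting by some scalar different from $1$, and is therefore not invariant by $\langle \N^1, \w_\F \rangle$.

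For uniqueness, I would characterize $\X_1$ as the largest sub-representation of $\X$ whose irreducible constituents are all invariant by $\langle \N^1, \w_\F \rangle$. The complement $\X_0$ is then also unique, because any two $\N$-stable complements of $\X_1$ differ by an element of $\Hom_\N(\X/\X_1, \X_1)$, and this space vanishes since no irreducible representation of $\N$ appears both as a subquotient of $\X_1$ and as a subquotient of $\X/\X_1$. The main point requiring care is the verification of $\N$-equivariance at each step, which follows cleanly from the normality of $\N^1$ and the centrality of $\w_\F$.
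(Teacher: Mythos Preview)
Your proof is correct and follows essentially the same two-step route as the paper: first split off $\X^{\N^1}$ using that $\N^1$ is a normal pro-$p$ subgroup with $p$ invertible in $\R$, then decompose $\X^{\N^1}$ according to the action of the central element $\varpi_\F$. The paper invokes its earlier Lemma~\ref{lemmecc} (decomposition by central character) for the second step, whereas you carry out the generalized eigenspace decomposition directly via the minimal polynomial and the Chinese remainder theorem; these are equivalent here. You also supply an explicit uniqueness argument, which the paper leaves implicit.
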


\begin{proof}
Comme dans la preuve du lemme \ref{decdebase2},
du fait que $\N^1$ est un pro-$p$-groupe et que $p$ est inversible dans $\R$, 
il~y~a une unique décomposition $\X=\Y\oplus\Z$
en sous-représentations telles que $\Y=\X^{\N^1}$
et aucun des composants irréductibles de $\Z$ ne soit invariant par $\N^1$. 

Puis, comme au lemme \ref{lemmecc},
il existe des scalaires $z_1,\dots,z_r\in\R^\times$ deux à deux distincts et une
décomposition~: 
\begin{equation*}
\Y = \Y_1 \oplus\dots\oplus\Y_r
\end{equation*}
telle que,
pour chaque $i\in\{1,\dots,r\}$, 
l'uniformisante $\w_\F$ agisse sur chaque composant irréducti\-ble
de $\Y_i$ par le scalaire $z_i$.
Si aucun des $z_i$ n'est égal à $1$,
on pose $\X_1=\{0\}$ et $\X_0=\X$.
Sinon,
on peut renuméroter de sorte que $z_1=1$,
et on pose $\X_1=\Y_1$ 
et $\X_0=\Y_2 \oplus\dots\oplus\Y_r\oplus\Z$.
\end{proof}

On sait que~$\N$ est engendré par $\N^0$~et l'élément $\w$ 
défini au paragraphe \ref{consnxi3}.
Comme $\w^s=\w_\F$, 
le quotient~: 
\begin{equation*}
\label{defNF}
\NF = \N / \langle\N^1,\varpi_\F \rangle
\end{equation*}
est un groupe fini.
D'autre part,
la restriction de $\mp$ à $\F^\times$,
le centre de $\N$,
est un multiple d'un caractère $\omega_{\mp}$.
Fixons un
caractère $\omega$ de $\N$ trivial sur $\N^0$ tel que
$\omega(\w_\F) = \omega_{\mp}(\varpi_\F)$ et posons~:
\begin{equation*}
\mp_1 = \mp\omega^{-1}. 
\end{equation*}
C'est une représentation de $\N$ triviale sur
$\langle\N^1,\varpi_\F \rangle$.
Elle peut donc être vue comme~une~repré\-sentation de $\NF$.
D'après \cite{Serre} Proposition 41,
elle a une enveloppe projective $\P_1$
dans la catégorie des~représentations~de longueur finie de 
$\NF$,
unique à isomorphisme près.
Elle est~indécomposa\-ble
et~sa~res\-triction à $\F^\times$ est un multiple du caractère 
$\omega_{\mp}\omega^{-1}$. 

\begin{lemm}
\label{canard}
La représentation $\P_1$ vue comme représentation de $\N$
est une enve\-loppe projective de $\mp_1$ dans $\rep_\R(\N)$. 
\end{lemm}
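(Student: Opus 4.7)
Le plan consiste à vérifier séparément que $\P_1$, vue par inflation comme représentation de $\N$, reste projective dans $\rep_\R(\N)$, et que la surjection canonique $\P_1\to\mp_1$ y demeure essentielle.

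Pour la projectivité, je partirais d'une suite exacte courte $0\to\Y\to\X\to\P_1\to0$ dans $\rep_\R(\N)$ et appliquerais le lemme \ref{decdebase} pour décomposer $\X=\X_1\oplus\X_0$. La définition de $\mp_1$ et le choix du caractère $\omega$ assurent que $\P_1$ est triviale sur $\langle\N^1,\w_\F\rangle$, donc aussi chacune de ses sous-représentations. L'image de $\X_0$ dans $\P_1$ est alors nulle, puisque c'est à la fois une sous-représentation de $\P_1$ --- donc trivialisée par $\langle\N^1,\w_\F\rangle$ --- et un quotient de $\X_0$, dont par construction aucun sous-quotient irréductible ne l'est. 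La surjection $\X\to\P_1$ se factorise ainsi en une surjection $\X_1\twoheadrightarrow\P_1$ dans $\rep_\R(\NF)$, catégorie dans laquelle $\P_1$ est projective par construction~; on en tire une section à valeurs dans $\X_1\subseteq\X$, d'où la projectivité recherchée.

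Pour l'essentialité et l'indécomposabilité de $\P_1$ dans $\rep_\R(\N)$, il suffit d'observer que toute sous-représentation de $\P_1$, et plus généralement toute décomposition de celle-ci en somme directe dans $\rep_\R(\N)$, reste définie sur $\NF$, puisque $\P_1$ y est annulée par $\langle\N^1,\w_\F\rangle$. L'indécomposabilité et l'essentialité déjà acquises sur $\NF$ se transportent donc automatiquement dans $\rep_\R(\N)$.

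Il n'y a pas d'obstacle sérieux à cette démarche~; l'ingrédient essentiel est que la normalisation $\mp_1=\mp\omega^{-1}$ a été effectuée précisément pour que $\w_\F$ agisse trivialement sur $\P_1$ --- et donc que celui-ci soit bien un objet de $\rep_\R(\NF)$ ---, ce qui rend applicable le lemme \ref{decdebase} et permet de ramener la question à la théorie, classique, des enveloppes projectives sur le groupe fini $\NF$.
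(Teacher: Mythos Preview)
Votre démarche est correcte et suit la même voie que l'article : le lemme \ref{decdebase} ramène tant la projectivité que la propriété d'enveloppe au cas du groupe fini $\NF$, puisque $\P_1$ y est inflété. La seule nuance est que l'article vérifie la propriété universelle de l'enveloppe projective (tout projectif se surjectant sur $\mp_1$ se surjecte sur $\P_1$) plutôt que l'essentialité de $\P_1\to\mp_1$, mais c'est une différence de présentation sans incidence.
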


\begin{proof}
Il suffit de prouver que $\P_1$ est projective dans $\rep_\R(\N)$.
Considérons le~dia\-gramme~:
\begin{equation*}
\xymatrix{ & \X \ar@{->>}[d]^{g} & \\ \P_1 \ar@{->}[r]_{v} & \Y }
\end{equation*}
dans la catégorie des représentations de longueur finie de $\N$.
\'Ecrivons~:
\begin{equation*}
\X=\X_1 \oplus \X_0,
\quad
\Y=\Y_1 \oplus \Y_0,
\end{equation*}
les décompositions de $\X$ et $\Y$ données par le lemme \ref{decdebase}. 
Comme $\P_1$ est triviale sur $\langle\N^1,\varpi_\F\rangle$,~on a
$\textup{Im}(v) \subseteq \Y_1$,
\ie que $v=v_1\oplus 0$.
\'Ecrivons $g=g_1\oplus g_0$ avec~:
\begin{equation*}
g_1\in\Hom_\N(\X_1,\Y_1), \quad g_0\in\Hom_\N(\X_0,\Y_0).
\end{equation*}
Par projectivité de $\P_1$ dans la catégorie des 
$\R$-représentations de $\NF$,
il y a un morphisme $w_1$~de $\P_1$ dans $\X_1$ tel que
$g_1\circ w_1=v_1$.
Posant $w=w_1\oplus0$,
on obtient $g\circ w=v$,~ce qui prouve que $\P_1$
est projective dans $\rep_\R(\N)$. 
\end{proof}

Montrons à présent un lemme qui
affirme que la projectivité résiste à la torsion par un~ca\-rac\-tère.  

\begin{lemm}
\label{Lemme 3.2.3}
Soit $\Q$ une représentation projective de $\N$, et soit $\psi$ un
caractère de $\N$.
\begin{enumerate}
\item 
La représentation $\Q\psi$ est une représentation projective de $\N$. 
\item
Si $\Q$ est une enveloppe projective d'une représentation 
irréductible $\W$ de $\N$, 
alors $\Q\psi$ est une enveloppe projective de $\W\psi$.
\end{enumerate}
\end{lemm}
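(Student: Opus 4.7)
The plan is to exploit the observation that twisting by the character $\psi$ is an auto-equivalence of the category $\Rep_\R(\N)$, with quasi-inverse given by twisting by $\psi^{-1}$. The functor $\V\mapsto\V\psi$ preserves the underlying $\R$-vector space and multiplies the action of each $g\in\N$ by the scalar $\psi(g)$. Consequently, an $\R$-subspace of $\V$ is stable under $\V$ if and only if it is stable under $\V\psi$, and an $\R$-linear map $f\colon\V_1\to\V_2$ is $\N$-equivariant for $(\V_1,\V_2)$ if and only if the same $f$ is $\N$-equivariant for $(\V_1\psi,\V_2\psi)$. In particular, there is a natural adjunction isomorphism
\begin{equation*}
\Hom_\N(\V_1\psi,\V_2)\simeq\Hom_\N(\V_1,\V_2\psi^{-1}),
\end{equation*}
and short exact sequences are preserved under twisting.

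For (1), I would take a surjection $f\colon\X\twoheadrightarrow\Y$ in $\Rep_\R(\N)$ together with a morphism $u\colon\Q\psi\to\Y$. Twisting gives a surjection $f\psi^{-1}\colon\X\psi^{-1}\twoheadrightarrow\Y\psi^{-1}$ (the same underlying $\R$-linear map), and by adjunction $u$ corresponds to a morphism $\widetilde u\colon\Q\to\Y\psi^{-1}$. The projectivity of $\Q$ provides a lift $v\colon\Q\to\X\psi^{-1}$ with $(f\psi^{-1})\circ v=\widetilde u$; twisting back by $\psi$ and applying the adjunction in reverse yields the required lift $\Q\psi\to\X$ of $u$.

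For (2), note that by (1) the representation $\Q\psi$ is already projective, so it suffices to show that the surjection $p\psi\colon\Q\psi\twoheadrightarrow\W\psi$ obtained from the given surjection $p\colon\Q\twoheadrightarrow\W$ (same underlying map) is essential. Essentiality of $p$ means that $\Ker(p)$ contains no nonzero direct factor of $\Q$, or equivalently that every proper subrepresentation of $\Q$ has image properly contained in $\W$. Since subrepresentations of $\Q$ and $\Q\psi$ are the same subspaces, and the underlying $\R$-linear map of $p$ and $p\psi$ is identical, this property transfers immediately.

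There is no real obstacle here: the lemma is a formal verification once one observes that twisting by a character is an invertible exact endofunctor of $\Rep_\R(\N)$ that acts trivially on the underlying spaces and on the lattices of subrepresentations. The only thing to be careful about is the direction of the twist in the adjunction formula, which ensures that projectivity is transported from $\Q$ to $\Q\psi$ rather than to $\Q\psi^{-1}$.
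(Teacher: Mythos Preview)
Your argument is correct. For part (1) you do essentially what the paper does: twist the test diagram by $\psi^{-1}$, lift using the projectivity of $\Q$, and identify $\Hom_\N(\Q,\X\psi^{-1})$ with $\Hom_\N(\Q\psi,\X)$.

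For part (2) you take a slightly different route. The paper verifies the universal property of a projective envelope directly: given a projective $\X$ surjecting onto $\W\psi$, it twists by $\psi^{-1}$, invokes part (1) to see that $\X\psi^{-1}$ is projective, then uses that $\Q$ is an envelope of $\W$ to obtain a surjection $\X\psi^{-1}\twoheadrightarrow\Q$, and untwists. You instead use the characterisation of an envelope as a projective object together with an \emph{essential} surjection, and observe that since twisting by $\psi$ leaves the underlying vector space, the lattice of subrepresentations, and the underlying linear map of $p$ unchanged, essentiality of $p$ is equivalent to essentiality of $p\psi$. Your approach is marginally more transparent because it isolates exactly what the character twist preserves; the paper's approach has the virtue of not needing to unpack the notion of essential epimorphism. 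One small quibble: your parenthetical ``$\Ker(p)$ contains no nonzero direct factor of $\Q$'' is not the definition of essentiality but a consequence of it (and the converse uses projectivity of $\Q$); the formulation you actually use --- every proper subrepresentation maps to a proper subrepresentation --- is the right one, so no harm done.
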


\begin{proof}
Considérons le diagramme~:
\begin{equation*}
\xymatrix{ & \X \ar@{->>}[d]^{g} & \\ \Q\psi \ar@{->}[r]_{v} & \Y }
\end{equation*}
dans $\rep_\R(\N)$.
Tordant par $\psi^{-1}$,
et $\Q$ étant projective,
on a un morphisme $w\in\Hom_\N(\Q,\X\psi^{-1})$
tel que $g\circ w=v$.
L'assertion (1) se déduit du fait que
$\Hom_\N(\Q, \X\psi^{-1})=\Hom_\N(\Q\psi,\X)$.~L'as\-ser\-tion
(2) en découle immédiatement. 
\end{proof}

\begin{prop}
\label{Propo 3.1.4} 
La représentation $\mp$ admet une enveloppe projective 
dans $\rep_\R(\N)$.
\end{prop}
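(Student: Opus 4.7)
The plan is to combine the two preceding lemmas directly: we already have a projective envelope for the twisted representation $\mp_1 = \mp\omega^{-1}$, and twisting by a character is shown to preserve projective envelopes.

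More precisely, the first step is to invoke Lemma \ref{canard}, which provides a projective envelope $\P_1$ of $\mp_1$ in $\rep_\R(\N)$ (obtained by inflating the projective envelope of $\mp_1$ viewed as a representation of the finite group $\NF$). This uses in an essential way the fact that $\mp_1$ is trivial on $\langle \N^1,\w_\F\rangle$ and that the decomposition of Lemma \ref{decdebase} is orthogonal with respect to morphisms into objects coming from $\NF$.

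The second step is to apply Lemma \ref{Lemme 3.2.3}(2) to the character $\omega$: since $\P_1$ is a projective envelope of $\mp_1$, the twist $\P_1\omega$ is a projective envelope of $\mp_1\omega$. But by construction $\mp_1\omega = \mp\omega^{-1}\omega = \mp$, which yields the desired projective envelope of $\mp$ in $\rep_\R(\N)$.

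There is no real obstacle here: the whole difficulty was absorbed into Lemma \ref{canard} (passing from $\NF$ to $\N$ using the lemma \ref{decdebase}) and into Lemma \ref{Lemme 3.2.3} (stability of projectivity and of projective covers under twisting by a character). The proposition itself is simply the composition of these two reductions, the choice of the auxiliary character $\omega$ being exactly what allows the reduction of the problem for $\mp$ to a representation trivial on $\w_\F$, hence a representation of the finite group $\NF$, where classical finite-group homological algebra applies.
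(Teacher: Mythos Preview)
Your proof is correct and matches the paper's own argument exactly: the paper simply invokes Lemma~\ref{canard} and Lemma~\ref{Lemme 3.2.3} to conclude that $\P_1\omega$ is a projective envelope of $\mp$ in $\rep_\R(\N)$.
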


\begin{proof}
D'après les lemmes \ref{canard} et \ref{Lemme 3.2.3}, 
la représentation $\P_1\omega$
est une enveloppe projective de $\mp$ dans $\rep_\R(\N)$.
\end{proof}

Par unicité de l'enveloppe projective à isomorphisme près, 
la classe~d'iso\-morphisme de $\P_1\omega$ est
indépendante du choix du caractère $\omega$ qui a servi à sa 
construction.

\subsection{}
\label{fiduciel}

On reprend les notations du paragraphe précédent,
en supposant que $\R$ est le corps $\flb$.
On~a donc une $\flb$-représentation irréductible $\mp$
de $\N$ triviale sur $\N^1$.

\begin{prop}
\label{propVigneras} 
Soit $\P$ l'enveloppe projective de $\mp$ dans $\rep_{\flb}(\N)$.
\begin{enumerate}
\item 
Il y a, 
à isomorphisme près, 
une unique $\zlb$-représentation projective 
$\tP$ dans $\rep_{\zlb}(\N)$ 
telle que $\tP\otimes\flb$ soit isomorphe à $\P$.
\item
Pour toute $\qlb$-représentation irréductible entière $\xp$ de $\N$, 
la multiplicité de $\xp$ dans $\tP\otimes\qlb$
est égale à celle de $\mp$ dans $\rl(\xp)$. 
\end{enumerate}
\end{prop}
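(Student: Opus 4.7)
L'approche consiste à ramener les deux points à la théorie classique des représentations modulaires du groupe fini $\NF = \N / \langle \N^1, \varpi_\F \rangle$, en exploitant la construction par torsion déjà utilisée dans les démonstrations des lemmes \ref{canard} et \ref{Lemme 3.2.3}. Tout d'abord, on écrit $\mp = \mp_1 \omega$ avec $\mp_1$ triviale sur $\langle \N^1, \varpi_\F\rangle$~; ceci identifie $\mp_1$ à l'inflation d'une représentation irréductible du groupe fini $\NF$. D'après les lemmes \ref{canard} et \ref{Lemme 3.2.3}, l'enveloppe projective de $\mp$ dans $\rep_\flb(\N)$ est de la forme $\P = \P_1 \omega$, où $\P_1$ est l'enveloppe projective de $\mp_1$ dans $\rep_\flb(\NF)$ (vue comme représentation de $\N$ par inflation).

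Pour le point (1), comme $\zlb$ est un anneau de valuation discrète complet de corps résiduel $\flb$, le théorème classique de relèvement des modules projectifs sur les algèbres de groupes finis (cf.\ \cite{Serre}, chapitres 14--15) fournit un unique module projectif $\tP_1$ sur $\zlb[\NF]$ tel que $\tP_1 \otimes_{\zlb} \flb \simeq \P_1$. On relève ensuite $\omega$ en un $\zlb$-caractère $\tilde\omega$ de $\N$~: comme $\omega$ est trivial sur $\N^0$ et que $\N/\N^0$ est engendré par l'image de $\w$, $\omega$ est déterminé par la valeur $\omega(\w) \in \flb^\times$, qui admet un relèvement canonique de Teichmüller en une racine de l'unité dans $\zlb^\times$. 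En posant $\tP = \tP_1 \tilde\omega$, les analogues sur $\zlb$ des lemmes \ref{canard} et \ref{Lemme 3.2.3} --- dont les démonstrations s'adaptent une fois observé que le foncteur $X \mapsto X^{\N^1}$ reste exact sur $\rep_\zlb(\N)$, puisque $\N^1$ est pro-$p$ avec $p \neq \ell$ --- assurent que $\tP$ est projective dans $\rep_\zlb(\N)$ et relève $\P$. L'unicité de $\tP$ se ramène à celle de $\tP_1$ sur $\NF$.

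Pour le point (2), il s'agit d'une variante de la réciprocité de Brauer adaptée à notre cadre. Étant donné un $\zlb$-réseau $L$ stable par $\N$ dans l'espace de $\tau$, on considère $M = \Hom_\N(\tP, L)$. Comme $\tP$ est projective de longueur finie et $L$ est de type fini sur $\zlb$, le module $M$ est de type fini, et la projectivité de $\tP$ force $M$ à être $\zlb$-libre. Appliquant le foncteur exact $\Hom_\N(\tP, -)$ à l'injection $L \hookrightarrow L \otimes \qlb$ et à la suite exacte courte $0 \to L \stackrel{\ell}{\to} L \to L \otimes \flb \to 0$, on obtient des isomorphismes
\[
M \otimes_{\zlb} \qlb \simeq \Hom_\N(\tP \otimes \qlb, \tau), \qquad M \otimes_{\zlb} \flb \simeq \Hom_\N(\P, L \otimes \flb).
\]
La $\qlb$-dimension du premier vaut $(\tP \otimes \qlb : \tau)$, puisque $\tP \otimes \qlb$ est semi-simple en caractéristique nulle~; la $\flb$-dimension du second vaut $(\rl(\tau) : \mp)$, puisque $\P$ est l'enveloppe projective de $\mp$ et que $\Hom_\N(\P, -)$ compte donc les multiplicités de $\mp$ dans tout semi-simplifié. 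Ces deux quantités coïncident avec le rang sur $\zlb$ de $M$, ce qui fournit l'égalité voulue.

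L'obstacle principal sera l'adaptation soigneuse du lemme \ref{decdebase} aux coefficients $\zlb$, nécessaire pour déduire la projectivité dans $\rep_\zlb(\N)$ à partir de celle sur le groupe fini $\NF$. Sur un corps, cette décomposition repose sur la séparation en sous-espaces propres généralisés de l'action de $\varpi_\F$, ce qui n'est plus directement disponible sur l'anneau $\zlb$. Le remède envisagé consiste à invoquer le lemme de Hensel pour relever les idempotents centraux séparant les valeurs propres de $\varpi_\F$ sur une $\zlb$-représentation de longueur finie, opération licite car $\zlb$ est $\ell$-adiquement complet et $\varpi_\F$ agit avec valeurs propres racines de l'unité sur le semi-simplifié.
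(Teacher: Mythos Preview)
Pour la construction de $\tP$ au point (1), ton approche --- d\'etordre par $\omega$, relever le projectif sur le groupe fini $\NF$, puis retordre par un rel\`evement $\tilde\omega$ --- est exactement celle du papier. Tu as raison de signaler que l'adaptation du lemme~\ref{decdebase} \`a $\zlb$ (pour en d\'eduire la projectivit\'e dans $\rep_{\zlb}(\N)$ via l'analogue du lemme~\ref{canard}) demande du soin~; le papier passe d'ailleurs ce point sous silence. En revanche, ta r\'eduction de l'\emph{unicit\'e} \`a celle sur $\NF$ est lacunaire~: \'etant donn\'e un projectif arbitraire $\Q$ relevant $\P$, rien ne force $\varpi_\F$ \`a agir sur $\Q\tilde\omega^{-1}$ exactement par l'identit\'e --- seulement par un endomorphisme congru \`a $1$ modulo $\m$ --- si bien que $\Q\tilde\omega^{-1}$ n'est pas manifestement inflat\'ee de $\NF$, et ton Hensel ne s\'epare rien ici. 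L'argument du papier contourne cela~: par projectivit\'e de $\tP$, l'isomorphisme $\tP\otimes\flb\simeq\Q\otimes\flb$ se rel\`eve en un morphisme $\tP\to\Q$, qui est un isomorphisme par Nakayama (les deux modules \'etant $\zlb$-libres de m\^eme rang).

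Pour le point (2), ton approche diff\`ere de celle du papier et est plus directe. Le papier proc\`ede par analyse de cas, en montrant d'abord (lemme~\ref{huysmans}) que $\tau$ est triviale sur $\N^1$ d\`es que $\rl(\tau)$ poss\`ede un facteur trivial sur $\N^1$, puis en d\'etordant par $\tilde\omega$ pour ramener le calcul de multiplicit\'es \`a la r\'eciprocit\'e de Brauer sur le groupe fini $\NF$ (\cite{Serre} \S15.4). Ta preuve via le rang du $\zlb$-module libre $M=\Hom_\N(\tP,L)$ est une d\'emonstration directe de la r\'eciprocit\'e de Brauer au niveau de $\N$, sans r\'eduction au cas fini. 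Un point \`a corriger~: la suite $0\to L\xrightarrow{\ell}L\to L\otimes\flb\to 0$ est fausse, l'id\'eal maximal $\m$ de $\zlb$ n'\'etant pas principal. Il faut utiliser $0\to\m L\to L\to L\otimes\flb\to 0$, puis v\'erifier que $\Hom_\N(\tP,\m L)=\m\cdot M$~; cela se voit en observant que l'image d'un morphisme $\tP\to\m L$, \'etant de type fini, est contenue dans $\lambda L$ pour un $\lambda\in\m$ convenable (l'anneau de valuation $\zlb$ ayant tous ses id\'eaux de type fini principaux).
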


\begin{proof}
Nous reprenons les notations du paragraphe précédent.
D'après la proposition 42 et le paragraphe 15.4 de \cite{Serre},
on a les faits suivants~:
\begin{itemize}
\item 
il y a, 
à isomorphisme près, 
une unique $\zlb$-représentation projective 
$\tP_1$ dans $\rep_{\zlb}(\NF)$ 
telle que $\tP_1\otimes\flb$ soit isomorphe à $\P_1$, et 
\item
pour toute $\qlb$-re\-pré\-sentation irréductible $\xp_1$ de $\NF$,
la multiplicité de $\xp_1$ dans $\tP_1\otimes\qlb$
est égale à
la multiplicité de $\mp_1$ dans $\rl(\xp_1)$.
\end{itemize} 
Fixons un $\zlb$-caractère $\widetilde{\omega}$ de $\N$ tel que
$\rl(\widetilde{\omega})=\omega$ et posons~:
\begin{equation}
\label{girafe}
\tP= \tP_1\widetilde{\omega}.
\end{equation}
C'est une représentation projective
d'après le lemme \ref{Lemme 3.2.3},
et $\tP\otimes\flb$ est isomor\-phe à $\P$.
Si $\Q$ est une $\zlb$-représentation projective
dans $\rep_{\zlb}(\N)$ ayant la~mê\-me propriété,
alors, par projectivité, tout isomorphisme entre
$\tP\otimes\flb$ et $\Q\otimes\flb$
se relève en un~iso\-mor\-phisme entre $\tP$ et $\Q$.

Afin de prouver la seconde partie de la proposition,
nous aurons besoin du lemme suivant.

\begin{lemm}
\label{huysmans}
Soit $\xp$ une $\qlb$-représentation irréductible entière du groupe $\N$. 
Les assertions sui\-van\-tes sont équivalentes~:
\begin{enumerate}
\item 
La représentation $\xp$ est triviale sur $\N^1$.
\item
La représentation $\rl(\xp)$ est triviale sur $\N^1$.
\item
La représentation $\rl(\xp)$ contient un facteur irréductible trivial sur 
$\N^1$.
\end{enumerate}
\end{lemm}

\begin{proof}
Bien sûr, 
(1) implique (2), lui-même impliquant (3).
Prouvons que (3) impli\-que (1).
Le groupe $\N^1$ étant distingué dans $\N$ et $\xp$ étant irréductible, 
il nous suffit de prouver que la restriction de $\xp$ à $\N^1$ contient le
caractère trivial.
Comme $\N^1$ est un pro-$p$-groupe avec $p\neq\ell$,
le $\flb$-caractère trivial de $\N^1$ est une représentation projective~;
le résultat s'ensuit.
\end{proof}

Soit $\xp$ une $\qlb$-représentation irréductible entière de $\N$.
Si elle n'est pas triviale sur $\N^1$,
alors elle n'ap\-pa\-raît pas dans $\tP\otimes\qlb$
(car $\tP$ est triviale sur $\N^1$ par construction)
et $\rl(\xp)$ ne contient aucun~fac\-teur irréductible trivial sur $\N^1$
d'après le lemme \ref{huysmans}.
Supposons maintenant que $\xp$ soit triviale~sur $\N^1$.
Sa restriction à $\F^\times$ est un multiple d'un caractère $\omega_\xp$.
Si la réduction mod $\ell$ de $\omega_\xp$ n'est pas égale à $\omega_\mp$, 
alors $\mp$ n'apparaît pas dans $\rl(\xp)$
et $\xp$ n'ap\-pa\-raît pas dans $\tP\otimes\qlb$. 
Sinon,
on peut choi\-sir le caractère $\widetilde{\omega}$ de \eqref{girafe}
tel que $\xp_1=\xp \widetilde{\omega}^{-1}$
soit triviale en $\w_\F$.
Alors~:
\begin{itemize}
\item 
la multiplicité de $\mp$ dans $\rl(\xp)$
est égale à la multiplicité de $\mp_1$ dans $\rl(\xp_1)$, 
\item
la multiplicité de $\mp_1$ dans $\rl(\xp_1)$ 
est égale à la multiplicité de $\xp_1$ dans $\tP_1\otimes\qlb$, 
\item
et la multiplicité de $\xp_1$ dans $\tP_1\otimes\qlb$
est égale à la multiplicité de $\xp$ dans $\tP\otimes\qlb$, 
\end{itemize} 
ce dont on déduit le résultat voulu. 
\end{proof}

\begin{rema}
\begin{enumerate}
\item
L'unicité de $\tP$ implique que,
pour tout $\zlb$-ca\-ractère $\chi$ de $\N$ dont la réduction mod $\ell$
est triviale, $\tP\chi$ et $\tP$ sont isomorphes. 
\item
La seconde assertion de la proposition \ref{propVigneras} implique en particulier
qu'une $\qlb$-repré\-sen\-ta\-tion irréductible entière de $\N$ apparaît comme
facteur de $\tP\otimes\qlb$ si et seulement si sa réduction mod $\ell$ 
contient $\mp$.
\end{enumerate}
\end{rema}

\subsection{}
\label{par46}

On reprend les notations du paragraphe \ref{fiduciel}~:
on considère une $\flb$-représentation irréductible $\mp$
de $\N$ triviale sur $\N^1$,
et on note $\P$ son enveloppe projective dans $\rep_{\flb}(\N)$.

\begin{lemm}
\label{decdebase4}
Soit $\mp'$ une $\flb$-représentation irréductible de $\N$
telle que $\Ext^1_\N(\mp,\mp')$ soit non nul. 
Alors $\mp'$ est un sous-quotient irréductible de $\P$.
\end{lemm}

\begin{proof}
Soit $\M$ une extension non triviale de $\mp$ par $\mp'$.
Notons $v$ la surjection de $\M$ sur $\mp$.
Le diagramme suivant~:
\begin{equation*}
\xymatrix{ 0
\ar[r] & \mp' \ar[r] & \M \ar^{v}[r] & \mp \ar[r] & 0 \\ & & &
\P \ar@{.>>}^{g}[ul]  \ar@{->>}_{f}[u] & }
\end{equation*}
(où $f$ est une surjection de $\P$ sur $\mp$)
montre que $\mp'$ apparaît comme sous-quotient de $\P$.
En effet, 
par projectivité de $\P$,
il y a un morphisme~$g$~de $\P$ dans $\M$ tel que $v\circ g=f$,
et il est surjectif,
sans~quoi son image serait incluse dans $\mp'=\Ker(v)$,
qui est l'unique sous-représentation propre maximale de $\M$ car $\M$
est non scindée.
\end{proof}

\begin{lemm}
\label{decdebase5}
Soit $\mp'$ un sous-quotient irréductible de $\P$.
Il existe 
une $\qlb$-représentation~ir\-ré\-duc\-tible entière $\xp$ de $\N$
triviale sur $\N^1$ dont la réduction mod $\ell$ contienne $\mp$ et $\mp'$.
\end{lemm}

\begin{proof}
Il existe des sous-représentations $\Y\subseteq\X$ de~$\P$ telles que 
le quotient $\X/\Y$ soit isomorphe à $\mp'$. 
Soit $\Q$ l'enveloppe
projective de $\mp'$ dans $\rep_\flb(\N)$.
Comme tout morphisme~de~$\Q$ vers $\mp'$ se relève vers $\X\subseteq\P$,
l'espace $\Hom_\N(\Q,\P)$ est non nul.
Notons $\tP$ et $\widetilde{\Q}$ les $\zlb$-représenta\-tions 
projectives de $\N$ relevant $\P$ et $\Q$
données par la proposition \ref{propVigneras}. 
La projectivité de~$\widetilde{\Q}$~en\-traîne que 
$\Hom_\N(\widetilde{\Q},\tP)$ est non nul.
Puis, $\tP$ et $\widetilde{\Q}$ étant libres sur $\zlb$,
on en déduit que~:
\begin{equation*}
\Hom_{\N}\left(\widetilde{\Q}\otimes\qlb , \tP\otimes\qlb \right) \neq \{ 0 \}.
\end{equation*} 
Par conséquent, il existe un facteur irréductible entier ${\xp}$ 
commun à $\widetilde{\Q}\otimes\qlb$ et $\tP\otimes\qlb$.
D'après la~pro\-posi\-tion~\ref{propVigneras}, les représentations
$\mp$ et $\mp'$ apparaissent dans $\rl({\xp})$.
Enfin, le fait que la~repré\-sentation $\xp$ soit triviale sur $\N^1$
découle du lemme \ref{huysmans}.
\end{proof}

\subsection{}
\label{merlinlenchanteur}

Revenons maintenant au paragraphe \ref{floupifloup},
en supposant que $\R$ est le corps $\flb$ et que $i$ est égal à $1$.
On fixe donc un $\flb$-caractère non ramifié $\chi$ de $\G$ tel que l'espace 
$\Ext^1_\N(\xi,\xi^{u}\chi)$ soit~non~nul pour~un $u\in\mathscr{N}_\G(\N^0)$.
Selon les~lemmes~\ref{decdebase4} et~\ref{decdebase5}, 
il y a donc une $\qlb$-représentation~ir\-ré\-duc\-tible~entière $\xp$ de
$\N$~tri\-viale~sur~$\N^1$ dont la~ré\-duc\-tion mod $\ell$
contienne $\xi$ et $\xi'=\xi^{u}\chi$. 
Rappelons que,
pour prouver la proposition \ref{ext1cuspniveau0},
il s'agit de prouver qu'il y a un~$j\in\ZZ$,
qu'on peut prendre égal à $0$ si $\ell$ ne divise
pas $q^n-1$,
tel que~$\chi$ et $\nu^j$ coïncident sur~$\N$.
Compte tenu de \eqref{genN}, l'ordre de la restriction de $\nu$ à $\N$
est égal à celui de $\nu(\w) = q^{-mb}$ dans 
$\overline{\mathbb{F}}{}_\ell^\times$,
et celui-ci vaut $\epsilon(\pi)$
d'après \eqref{defepstor} et la~re\-mar\-que \ref{etiennelousteau}. 

\begin{lemm}
\label{ese}
L'entier $\epsilon(\pi)$ est un multiple de $(s(\pi),\epsilon(\rho))$.
\end{lemm}

\begin{proof}
C'est évident si $\pi$ est supercuspidale, car alors $\rho=\pi$. 
Si $\pi$ n'est pas supercuspidale,
c'est une conséquence de \cite{MSjl} Lemme 3.16,
qui dit que $\epsilon(\pi)$ est égal à $(s(\rho),\epsilon(\rho))$,
et du fait \ref{formek}, qui dit que $s(\rho)=s(\pi)$. 
\end{proof}

Dans les paragraphes \ref{par47} à \ref{genevite}, 
nous allons prouver que l'ordre de $\chi(\w)$
divise $s(\pi)$ et~$\epsilon(\rho)$.~Il 
divisera donc $\epsilon(\pi)$ d'après le lemme \ref{ese}.
Le groupe $\N/\N^0$ étant cyclique,
on en déduira l'exis\-ten\-ce d'un $j\in\ZZ$ tel que 
$\chi$ et $\nu^j$ coïncident sur~$\N$.
Puis nous prouverons au paragraphe \ref{utherpendragon} que,
si $\ell$ ne divise pas $q^n-1$,
les représentations $\pi$ et $\pi\chi$ sont isomorphes. 

\subsection{}
\label{par47}

Fixons un facteur~ir\-ré\-duc\-tible $\W$ de la~res\-triction~de $\xp$ à $\N^0$. 
Celui-ci est l'inflation d'une $\qlb$-représen\-ta\-tion~irréductible $\g$ de 
$\GL_m(\kk_{\D})$,
elle-même sous-quotient irréductible d'une induite~:
\begin{equation}
\label{demarsay}
\tau_1\times\dots\times\tau_r
\end{equation}
où $\tau_i$ est une $\qlb$-représentation irréductible cuspidale de 
$\GL_{m_i}(\kk_{\D})$,
avec $m_1+\dots+m_r=m$. 
Pour chaque $i\in\{1,\dots,r\}$,
notons $\s_i$ la réduction mod $\ell$ de $\tau_i$.
D'après \cite{MSf}~Théo\-rè\-me~2.9,~c'est
une représentation irréductible cuspidale. 
Par hypothèse sur $\xp$,
les représen\-ta\-tions $\s$ et $\s^u$
(la définition de $\s$ étant rappelée au paragraphe \ref{consnxi})
apparaissent dans $\r_\ell(\g)$,
donc dans l'induite~:
\begin{equation}
\label{finot}
\s_1\times\dots\times\s_r,
\end{equation}
ce qui entraîne que $\s$ et $\s^u$ ont le même support supercuspidal.
D'après la proposition \ref{cuspscusp5fini}, 
on en déduit d'une part que~$\s$ et $\s^u$ sont isomorphes, 
\ie que $u\in\N$,
et d'autre part que $\s$ apparaît avec mul\-tiplicité $1$ dans \eqref{finot},
donc \textit{a fortiori} dans $\r_\ell(\g)$.
On tire immédiatement de la proposition \ref{decdebase3}
le corollaire suivant.

\begin{coro}
\label{decdebase3coro}
Il y a un isomorphisme~:
\begin{equation*}
\Ext^1_\G(\pi,\pi\chi) \simeq \Ext^1_\N(\xi,\xi\chi)
\end{equation*}
de $\flb$-espaces vectoriels.
\end{coro}

\subsection{}
\label{revenant}

Revenons à la représentation $\W$,
dont on note $\SS$ le normalisateur dans $\N$.
Par la théorie~de Clifford, 
il existe un prolonge\-ment de $\W$ à $\SS$,
que l'on note encore $\W$, 
dont l'induite à $\N$ soit~iso\-morphe~à~$\xp$.
Comme $\s$ apparaît avec mul\-tiplicité $1$ dans $\r_\ell(\g)$, 
il y a un unique composant~ir\-ré\-ductible $\X$ de $\r_\ell(\W)$
dont la~res\-tric\-tion à $\N^0$ contienne l'inflation $\xi^0$ de $\s$.
Les représen\-ta\-tions
$\xi$ et $\xi'=\xi\chi$ apparaissent donc toutes deux dans 
l'induite de $\X$ à $\N$. 

Notons $\SS'$ le normalisateur de $\X$ dans $\N$,
et fixons un prolongement $\X'$ de $\X$ à $\SS'$. 
L'induite de $\X'$ à $\N$ est~ir\-réductible, et
les composants irréductibles de
l'induite de $\X$ à $\N$ sont de la forme~: 
\begin{equation*}
\label{formecomposantinduite}
\Ind^\N_{\SS'}(\X'\h)
\end{equation*}
où $\h$ est un $\flb$-caractère de $\SS'$ trivial sur $\SS$.
Choisissons $\X'$ de façon que son induite à $\N$ soit~iso\-morphe à $\xi$. 
La restriction de $\xi$ à $\SS'$ étant irréductible,
on en déduit que $\SS'=\N$,
puis~qu'il existe un $\flb$-caractère $\h$ de $\N$ trivial sur $\SS$
tel que $\xi'$ soit isomorphe à $\xi\h$.
Par conséquent, $\chi$ est~trivial sur $\SS$,
\ie que l'ordre de $\chi(\w)$
divise $(\N:\SS)$, 
qui lui-même divise~$s(\pi)$, celui-ci étant par définition
l'in\-dice de $\F^\times\N^0$ dans $\N$.

\subsection{}
\label{genevite}

Plus précisément,
l'ordre de $\chi(\w)$ étant premier à $\ell$,
il divise le plus grand diviseur de $(\N:\SS)$ premier à $\ell$,
que l'on note $e$ dans ce paragraphe.
Nous allons prouver que $e$ divise $\epsilon(\rho)$.

Rappelons que $\r_\ell(\g)$ contient $\s$. 
D'après~\cite{HelmBernstein} Proposition 5.8,
la représentation $\g$ est donc~gé\-né\-rique
(au sens où sa restriction au groupe des matrices triangulaires
uni\-potentes~supé\-rieures
contient le caractère~:
\begin{equation*}
x\mapsto\psi(x_{1,2}+x_{2,3}+\dots+x_{m-1,m})
\end{equation*}
pour n'importe quel caractère non trivial $\psi$ de $\kk_\D$). 
C'est donc l'unique sous-quotient irréductible générique de 
\eqref{demarsay},
où $\g$ apparaît avec multiplicité $1$.
Par conséquent,
le nombre de conjugués~de $\g$ sous $\Gal(\kk_\D/\kk_\F)$,
ou de façon équivalente sous $\N$,
nombre égal à $(\N:\SS)$, 
divise le plus grand multiple commun à $a_1,\dots,a_r$,
où l'entier $a_i$ est le nombre de conjugués de $\tau_i$ sous $\Gal(\kk_\D/\kk_\F)$.
Il suffit donc de prouver que $e_i$,
le plus grand diviseur de $a_i$ premier à $\ell$,
divise $\epsilon(\rho)$.
Appliquant la proposition \ref{cuspscusp5fini},
et compte tenu du fait \ref{bitcoink},
on~a~: 
\begin{equation*}
\scusp(\s) = \a+\dots+\a
\end{equation*} 
où $\a$ est une représentation supercuspidale de $\GL_{m/k}(\D)$.
D'après \cite{MSjl} Lemmes 3.10, 
3.13,~l'en\-tier $e_i$ est égal soit à $1$, soit à l'ordre de
$q^{bm_i}$ mod $\ell$. 
Par ailleurs,
la représentation $\s$ apparaissant dans l'induite \eqref{finot},
on déduit de l'unicité du support supercuspidal que
le support supercuspidal de $\s_i$ est une som\-me de copies de $\a$.
Par conséquent,
$m_i$ est un multiple de $m/k$,
donc $e_i$ divise l'ordre de $q^{bm/k}$ mod $\ell$,
qui est égal à $\epsilon(\rho)$. 

\subsection{}
\label{utherpendragon}

Il ne nous reste plus qu'à prouver que, 
si $\ell$ ne divise pas $q^n-1$,
alors $\chi$ est trivial sur $\N$.
On suppose donc que 
$\ell$ ne divise pas $q^n-1$,
\ie que $\epsilon(\pi)$ ne divise pas $s(\pi)$.
Il suit alors~de \cite{MSjl} Remarque 6.1 que $\s$, 
ou de façon équiva\-lente~$\pi$ d'après~le~fait~\ref{bitcoin0},
est supercuspidale, et~on déduit
de \cite{Vigb} III.2.9 que $\g$ est cuspidale.
D'après \cite{MSt} Proposition 3.1,
l'induite compacte~$\mu$~de la représentation $\W$
(du groupe $\SS$ du paragraphe \ref{revenant})
à $\G$ 
est une $\qlb$-représentation irréductible cuspidale 
dont~la réduction mod $\ell$ contient $\pi$ et $\pi\chi$. 
Mais le corollaire \ref{coromal2q3} dit que $\r_\ell(\mu)$
est~ir\-ré\-ductible,
donc $\pi\chi$ est isomorphe à $\pi$.

\section{Le cas cuspidal de niveau quelconque}
\label{sec5}

Dans cette section, 
on étudie le cas des représentations cuspidales quelconques de $\G$. 

\subsection{}
\label{supertypes}

Dans ce paragraphe, $\R$ est un corps algébriquement clos de
caractéristique~dif\-férente de $p$.

Soit $\pi$ une $\R$-représentation irréductible cuspidale de $\G$, 
et soit $\Omega=\Omega(\pi)$ sa classe~iner\-tiel\-le.
D'après le théorème \ref{decbvss}, 
la catégorie $\Rep_\R(\G,\Omega)$, formée des~re\-présen\-tations 
dont les~sous-quo\-tients irré\-ductibles ont leur support supercuspidal
dans $\Omega$, 
est un facteur direct~indé\-compo\-sable de $\Rep_\R(\G)$. 

Fixons une paire $(\J^0,\bl)$ comme au paragraphe \ref{flapflip},
dont nous utiliserons les notations.~Rappe\-lons
qu'on a une extension finie $\E$ de $\F$, 
une $\E$-algèbre $\B$ qu'on identifie à $\M_r(\C)$ via un isomorphisme 
\eqref{melon} fixé~une~fois pour toutes,
que $\J^0$ a un unique pro-$p$-sous-groupe distingué maximal $\J^1$
et que la restriction de $\bl$ à $\J^1$ est un multiple d'une 
représentation irréductible $\bn$.
Dans~tou\-te cette section, on pose~:
\begin{equation*}
\G_0=\B^\times\simeq\GL_r(\C).
\end{equation*}
D'après~\cite{SEns} Proposition 10.2,
l'induite~com\-pac\-te $\ind_{\J^0}^\G(\bl)$~ap\-par\-tient à
$\Rep_\R(\G,\Omega)$.

\subsection{}
\label{sansnumero}

Nous allons maintenant décrire certains résultats dus à Chinello 
\cite{gianmarco3}
que nous utiliserons pour nous 
ramener à un bloc~de ni\-veau
$0$ de $\Rep_{\flb}(\G_0^\times)$.
Notons $\Rep_{\flb}(\G,\bn)$ la sous-catégorie pleine formée des
représentations de $\Rep_{\flb}(\G)$ engendrées par leur composante
$\bn$-isotopique. 
La caté\-gorie $\Rep_{\flb}(\G,\Omega)$ introduite au paragraphe précédent
en est un facteur direct. 

Notons 
$\Hh_{{\flb}}(\G,\bn)$ la~${\flb}$-algèbre des fonctions à support compact 
$\Psi : \G \to \End_{\flb}(\n)$
telles que $\Psi (xgy)=\n(x)\circ\Psi (g)\circ\n(y)$ quels que soient $x,y\in\J^1$, 
munie du produit~:
\begin{equation*}
\Psi*\Psi' : g \mapsto \sum\limits_{h} \Psi (h)\Psi '(h^{-1}g)
\end{equation*}
où $h$ décrit un système de représentants de $\G/\J^1$.
Notant $\Mod(\Hh_\flb(\G,\bn))$ la~caté\-gorie des~mo\-du\-les à droite sur
$\Hh_\flb(\G,\bn)$, on a un foncteur~: 
\begin{eqnarray}
\label{ligneta}
\MM_{\bn} : \Rep_\flb(\G,\bn) &\to& \Mod(\Hh_\flb(\G,\bn)) \\
\notag
\V &\mapsto& \Hom_\G\left(\ind^\G_{\J^1} (\bn), \V\right)
\end{eqnarray}
où l'action (à gauche)
de $\Psi\in\Hh_\flb(\G,\n)$ sur $f\in\ind^\G_{\J^1}(\n)$
est définie par la formule~:
\begin{equation*}
\Psi * f : g \mapsto \sum\limits_{h} \Psi(h) f(h^{-1}g)
\end{equation*}
et l'action (à droite) de $\Psi$ sur $\h\in\MM_{\bn}(\V)$
est donnée par $\h\cdot\Psi:f\mapsto\h(\Psi*f)$.
On a le résultat important suivant. 

\begin{theo}[\cite{gianmarco3} Theorem 5.10]
Le foncteur $\MM_\n$ est une équivalence de catégories. 
\end{theo}

Observons que,
dans le cas particulier où $\pi$ est de niveau $0$,
$\G_0$ est égal à $\G$ et $\eta$ est le~caractè\-re~trivial du
groupe $\J^1=\N^1=1+\Mat_m(\p_\D)$.
Le foncteur \eqref{ligneta} définit alors une équivalence~de~la ca\-tégorie 
$\Rep_\flb(\G,\N^1)$ des $\flb$-représentations de $\G$
qui sont de niveau $0$, 
\ie qui sont~en\-gen\-drées par leurs~vec\-teurs $\N^1$-invariants,
vers la catégorie $\Mod(\Hh_\flb(\G,\N^1))$
des modules à~droi\-te sur la $\flb$-algèbre des fonctions de $\G$
dans $\flb$ à support compact et bi-invariantes par $\N^1$.

\subsection{}
\label{foncGG}

Notons $\K^0$ le sous-groupe compact maximal $\GL_r(\Oo_\C)$ de $\G_0$
et $\K^1$ son pro-$p$-sous-groupe~dis\-tingué maximal.
Comme au paragraphe~\ref{sansnumero}, on définit
la sous-ca\-té\-gorie pleine $\Rep_\flb(\G_0,\K^1)$~de $\Rep_\flb(\G_0)$
formée des $\flb$-représentations engendrées par leurs~vec\-teurs
$\K^1$-invariants, 
la $\flb$-al\-gè\-bre $\Hh_{\flb}(\G_0,\K^1)$ des fonctions de $\G_0$ dans $\flb$
à support compact et bi-invariantes par $\K^1$, et le fonc\-teur~:
\begin{eqnarray*}
\MM_{1} : \Rep_\flb(\G_0,\K^1) &\to& \Mod(\Hh_\flb(\G_0,\K^1)) \\
\V &\mapsto&
\Hom_{\G_0}\left(\ind^{\G_0}_{\K^1} (1), \V\right)
\end{eqnarray*} 
qui est une équivalence de catégories selon
\cite{gianmarco2} théorème 3.2. 
Dans \cite{gianmarco3},
Chinello construit un~iso\-morphisme de $\flb$-algèbres~:
\begin{equation*}
\label{isotheta}
\t : \Hh_{\flb}(\G_0,\K^1) \to \Hh_{\flb}(\G,\bn)
\end{equation*}
dépendant du choix de la représentation $\k$ prolongeant $\n$
au groupe $\J^0$ fixée au paragraphe \ref{flapflip}, 
et ayant la~pro\-priété suivante (voir \cite{gianmarco3} Section 3,
et en particulier \cite{gianmarco3} Theorem 3.43). 

\begin{enonce}{Fait}
\label{chourico}
Soit $\w_\C$ une uniformisante de $\C$,
soit un entier $i\in\{0,\dots,r\}$ et soit la~matrice~dia\-gonale~:
\begin{equation*}
b={\rm diag}(1,\dots,1,\w_\C,\dots,\w_\C) \in \G_0
\end{equation*}
où $1$ apparaît $i$ fois. 
Si $\Psi_0\in\Hh_{\flb}(\G_0,\K^1)$ est à support dans $\K^1b\K^1$,
son image $\t(\Psi_0)$ est~à~sup\-port dans $\J^1b\J^1$.
\end{enonce}

Cet isomorphisme d'algèbres $\t$
définit une équivalence de catégories~:
\begin{equation*}
\theta^* : \Mod(\Hh_{\flb}(\G,\bn)) \to \Mod(\Hh_{\flb}(\G_0,\K^1)).
\end{equation*}
Composant les foncteurs $\MM_\n$ et $\t^*$ avec un
quasi-inverse de $\MM_1$,
Chinello obtient une équivalen\-ce~de~catégories~:
\begin{equation*}
\GG : \Rep_{\flb}(\G,\bn) \to \Rep_{\flb}(\G_0,\K^1).
\end{equation*}
Le diagramme commutatif suivant~:
\begin{equation*}
\xymatrix{\Rep_{\flb}(\G,\bn) \ar[r]^{\GG}  \ar[d]_{\MM_{\bn}} & \Rep_{\flb}(\G_0,\K^1)
\ar[d]^{\MM_1}\\ \Mod(\Hh_{\flb}(\G,\bn)) \ar[r]_{\theta^*} &
\Mod(\Hh_{\flb}(\G_0,\K^1))}
\end{equation*} 
résume la situation.

\subsection{}

Considérons maintenant la représentation $\bs$ de $\J^0$ triviale sur $\J^1$
du paragraphe \ref{flapflip},
telle que $\l^0$ soit isomorphe à $\k\otimes\bs$.
Elle~dé\-fi\-nit, par restriction, une représentation $\bs_0$ de
$\J^0\cap\G_0\simeq\K^0$ triviale sur $\J^1\cap\G_0\simeq\K^1$,
inflation de la représentation cuspidale $\s$ du groupe~:
\begin{equation}
\label{MmedeNegrepelisse}
\J^0/\J^1\simeq\K^0/\K^1\simeq\GL_r(\ll).
\end{equation}

Notons $\K$ le normalisateur de~la classe d'isomorphisme de $\bs_0$
dans $\G_0$,
et fixons un prolongement $\xi^{\phantom{0}}_0$ de $\bs_0$ à $\K$.
D'après \cite{MSt} Proposition 3.1,
l'induite compacte de $\xi_0$ à $\G_0$,~no\-tée $\pi_0$,
est une $\flb$-re\-pré\-sen\-tation
cuspi\-da\-le de niveau $0$ de $\G_0$. 
Par construction de $\pi_0$, on a le résultat suivant.

\begin{lemm}
\label{quicklemma}
On a $q(\pi)=q(\pi_0)$.
\end{lemm}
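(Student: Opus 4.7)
The plan is to unfold the definitions of $q(\pi)$ and $q(\pi_0)$ given in paragraph~\ref{flapflip} and to use the numerical relations furnished by the isomorphism~\eqref{melon}. Since the conclusion is an equality of integers and no deep representation-theoretic ingredient is involved beyond the identification of invariants, the argument should be essentially arithmetic.

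By definition, $q(\pi) = q^{f(\pi)}$ with $f(\pi) = n/e_{\E/\F}$, where $n = md$ and $e_{\E/\F}$ denotes the ramification index of $\E/\F$. On the other hand, $\pi_0$ is a supercuspidal representation of level $0$ of the group $\G_0 = \GL_r(\C)$, viewed as an inner form of $\GL_{rc}(\E)$ over the base field $\E$. The extension playing the role of ``$\E$'' in the construction of paragraph~\ref{flapflip} attached to $\pi_0$ is $\E$ itself (since $\pi_0$ is of level zero over $\E$), so its ramification index is $1$; the integer analogous to $n$ for $\G_0$ is $rc$. Therefore $f(\pi_0) = rc$ and, noting that the residual cardinal of the base field for $\G_0$ is $q_\E$, one gets $q(\pi_0) = q_\E^{rc}$.

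It then remains to check that $q^{n/e_{\E/\F}} = q_\E^{rc}$. Using $q_\E = q^{f_{\E/\F}}$, where $f_{\E/\F}$ is the residual degree of $\E/\F$, this reduces to the identity $n/e_{\E/\F} = f_{\E/\F} \cdot rc$, or equivalently $n = [\E:\F]\cdot rc$. But this is exactly the first relation in~\eqref{melon}, which gives $rc = md/[\E:\F] = n/[\E:\F]$. The equality $q(\pi) = q(\pi_0)$ follows.

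The only point deserving attention is the identification of the parameters attached to $\pi_0$ with those of $\pi$: one has to make sure that the base field used in the construction of the invariants for $\pi_0$ is $\E$ and not $\F$, and that the ``$n$'' for $\G_0$ is $rc$ rather than $r$. Once these conventions are fixed, the equality is immediate from~\eqref{melon}.
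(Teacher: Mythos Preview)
Your argument is correct and is exactly what the paper intends: it merely says ``Par construction de $\pi_0$'' and states the lemma without further proof, so your unfolding of the definitions via the identity $rc=n/[\E:\F]$ from~\eqref{melon} together with $q_\E=q^{f_{\E/\F}}$ is precisely the verification the reader is left to carry out. The one delicate point --- that the invariants of $\pi_0$ must be computed with base field $\E$ (so that the relevant ``$n$'' is $rc$ and the level-zero extension has ramification index $1$) --- you have identified and handled correctly.
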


Comme au paragraphe \ref{flapflip},
la représentation cuspidale
$\pi_0$ définit un facteur direct~indé\-com\-posable
$\Rep_\flb(\G_0,\Omega_0)$ de $\Rep_\flb(\G_0,\K^1)$,
où $\Omega_0$ est la classe inertielle de $\pi_0$,
formé des re\-pré\-sen\-ta\-tions 
dont les sous-quotients ir\-ré\-ductibles
ont leur support supercuspidal
dans $\Omega_0$.
D'après \cite{gianmarco3} Theorem 5.15, on a le résultat suivant. 

\begin{prop}
\label{urgence}
Le foncteur $\GG$ induit une équivalence entre $\Rep_\flb(\G,\Omega)$
et $\Rep_\flb(\G_0,\Omega_0)$.
\end{prop}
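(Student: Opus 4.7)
Le plan consiste à exploiter que $\GG$ est déjà une équivalence entre $\Rep_\flb(\G,\bn)$ et $\Rep_\flb(\G_0,\K^1)$ : il suffit donc de vérifier qu'elle envoie le facteur direct $\Rep_\flb(\G,\Omega)$ sur le facteur direct $\Rep_\flb(\G_0,\Omega_0)$. Je commencerais par remarquer que $\pi$ appartient à $\Rep_\flb(\G,\bn)$, car sa restriction à $\J^1$ contient $\bn$, la paire $(\J^0,\bl)$ étant un type contenu dans $\pi$ ; je réduirais ensuite le problème à l'identification $\GG(\pi)\simeq\pi_0$.

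Pour cette identification, la réciprocité de Frobenius donne
\begin{equation*}
\MM_\bn(\pi)\simeq\Hom_{\J^1}(\bn,\pi|_{\J^1}),
\end{equation*}
avec son action à droite de $\Hh_\flb(\G,\bn)$ décrite par les opérateurs d'entrelacement du prolongement $\l$ de $\bl$ ; côté $\G_0$, on a de même $\MM_1(\pi_0)\simeq\pi_0^{\K^1}$. L'isomorphisme $\t:\Hh_\flb(\G_0,\K^1)\to\Hh_\flb(\G,\bn)$ est construit par Chinello à partir du choix de $\k$ prolongeant $\bn$, et transporte les éléments supportés sur les doubles classes $\K^1 b \K^1$ vers ceux supportés sur $\J^1 b \J^1$ (fait \ref{chourico}). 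Comme la restriction de $\bl$ à $\J^0$ fait apparaître, au quotient $\J^0/\J^1\simeq\GL_r(\ll)$, la même représentation cuspidale $\s$ que celle d'où provient $\xi_0$ par construction de $\pi_0$, on obtient que $\t^*\MM_\bn(\pi)$ est isomorphe à $\MM_1(\pi_0)$, soit $\GG(\pi)\simeq\pi_0$.

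La conclusion s'ensuit alors par principe général : une équivalence de catégories abéliennes préserve la décomposition en produit de facteurs indécomposables. D'après le théorème \ref{decbvss}, $\Rep_\flb(\G,\Omega)$ est l'unique bloc contenant $\pi$ dans $\Rep_\flb(\G,\bn)$ et $\Rep_\flb(\G_0,\Omega_0)$ est l'unique bloc contenant $\pi_0$ dans $\Rep_\flb(\G_0,\K^1)$ ; l'identification $\GG(\pi)\simeq\pi_0$ entraîne l'égalité voulue des images.

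Le principal obstacle est l'identification $\GG(\pi)\simeq\pi_0$ : elle exige un contrôle fin de $\t$ au-delà de la seule compatibilité des supports donnée par le fait \ref{chourico}, notamment sur les opérateurs d'entrelacement correspondant aux éléments $b$ uniformisants, qui gouvernent l'action des torsions par caractères non ramifiés et donc la structure inertielle. C'est précisément le cœur technique de la construction de Chinello dans \cite{gianmarco3}.
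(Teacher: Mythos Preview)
The paper does not give its own proof of this proposition: it is stated as a direct citation of \cite{gianmarco3} Theorem 5.15. Your sketch is a reasonable outline of the argument one finds in Chinello, and you correctly flag that the decisive technical point --- identifying the image of $\pi$ under $\GG$ via the explicit description of $\t$ --- lies entirely in that reference.

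One small logical point is worth noting. In the paper's order of exposition, $\pi_0$ is first defined as the compact induction of an \emph{arbitrary} extension $\xi_0$ of $\bs_0$ to $\K$; the inertial class $\Omega_0$ does not depend on this choice, and the proposition is stated at that stage. It is only \emph{after} the proposition that the paper writes ``Dor\'enavant, nous choisirons $\pi_0$ \ldots\ de sorte que $\GG(\pi)\simeq\pi_0$''. So, strictly speaking, your reduction to the exact isomorphism $\GG(\pi)\simeq\pi_0$ is slightly stronger than what is needed (and uses a normalization not yet in force): for the proposition as stated it suffices to show that $\GG(\pi)$ lies in $\Omega_0$, i.e.\ is an unramified twist of $\pi_0$. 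Your block-preservation argument via Theorem~\ref{decbvss} then applies unchanged.
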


\subsection{}
\label{heronseiche}

L'objet de ce paragraphe est de prouver le résultat suivant.

\begin{lemm}
\label{lemmeheronseiche}
La représentation $\GG(\pi)$ est cuspidale.
\end{lemm}

\begin{proof}
Notons $\V$ la $\flb$-représentation $\GG(\pi)$,
et considérons l'espace $\V^{\K^1}$
des vecteurs de $\V$ invariants sous $\K^1$, 
canoniquement isomorphe~à~:
\begin{equation*}
\label{lrouge}
\MM_1(\V) = \Hom_{\G_0}\left(\ind^{\G_0}_{\K^1} (1), \V\right).
\end{equation*}
C'est un module à~droite sur $\Hh_\flb(\G_0,\K^1)$. 
Si on le restreint à la sous-algèbre $\Hh_\flb(\K^0,\K^1)$
des fonctions à support dans $\K^0$
et si on identifie celle-ci à l'algèbre de groupe de $\K^0/\K^1$,
on obtient la représentation~na\-tu\-relle de $\K^0/\K^1$
sur $\V^{\K^1}$,
que l'on voit \textit{via} \eqref{MmedeNegrepelisse} comme une représentation
du groupe $\GB=\GL_r(\kk_\C)$, que l'on note $\Vv$.

Considérons maintenant l'espace $\Hom_{\J^1}(\n,\pi)$,
canoniquement isomorphe~à~:
\begin{equation*}
\label{lbleu}
\MM_\n(\pi) = \Hom_{\G}\left(\ind^{\G}_{\J^1} (\n), \pi\right)
\end{equation*}
par réciprocité de Frobenius. 
D'après la propriété (4.c) du paragraphe \ref{flapflip},
cet espace définit une représentation cuspidale~de $\GB$ qui,
en~ver\-tu du diagramme commutatif \cite{gianmarco3} (12),
est isomorphe à $\Vv$.
Il s'ensuit que $\Vv$~est~cus\-pidale. 

Supposons maintenant que $\V$ ne soit pas cuspidale.
Il existe donc un sous-groupe parabolique standard propre
$\P_0=\M_0\U_0$ de $\G_0$ et une représentation irréductible 
$\W$ de $\M_0$ tels que $\V$ soit~un sous-quotient 
de l'induite parabolique de $\W$ à $\G_0$.
Notant respectivement $\PB$ et $\MB$ les images~de $\P_0\cap\K^0$ 
et $\M_0\cap\K^0$ dans $\GB$, 
ainsi que $\Ind_{\P_0}^{\G_0}$ et $\Ind^{\GB}_{\PB}$ les foncteurs 
d'induction parabolique~cor\-respondant à $(\P_0,\M_0)$ et 
$(\PB,\MB)$, on a un isomorphisme~:
\begin{equation}
\label{lvert}
\left(\Ind_{\P_0}^{\G_0}(\W)\right)^{\K^1} \simeq
\Ind^{\GB}_{\PB}\left(\W^{\M_0\cap\K^1}\right),
\end{equation}
de représentations de $\GB$, 
l'espace $\W^{\M_0\cap\K^1}$ des vecteurs de $\W$ invariants par 
$\M_0\cap\K^1$ étant~con\-sidéré comme une représentation de 
$(\M_0\cap\K^0)/(\M_0\cap\K^1)\simeq \MB$.
(L'isomorphisme \eqref{lvert} se déduit de l'identité $\G_0=\P_0\K^0$~;
c'est aussi un cas particulier de \cite{SEns} Proposition 5.6.)
Enfin, le sous-groupe $\K^1$ étant un pro-$p$-groupe, 
le foncteur des $\K^1$-invariants est exact. 
La représentation $\Vv$ est donc un sous-quotient de \eqref{lvert},
ce qui contredit le fait qu'elle est cuspidale. 
\end{proof}

Les représentations $\GG(\pi)$ et $\pi_0$ sont donc toutes deux cuspidales,
et leurs supports supercuspidaux sont inertiellements équivalents.
Raisonnant comme dans la preuve du lemme \ref{decdebase25},
on en déduit qu'elles sont tordues l'une de l'autre par
un caractère non ramifié de $\G_0$.
Nous pouvons donc choisir $\xi_0^{}$
de sorte que $\GG(\pi)$ et $\pi_0$ soient isomorphes,
ce que nous ferons dorénavant.

\subsection{}

Nous allons montrer que $\GG$ se comporte bien vis-à-vis de la torsion par un
caractère non~rami\-fié,
afin d'utiliser les résultats de la section précédente.

\begin{lemm}
\label{adieulescons}
Soit $\chi$ un $\flb$-caractère non ramifié de $\G$,
et soit $\chi_0$ la restriction de $\chi$ à $\G_0$.
Alors $\GG(\pi\chi)$ est isomorphe à $\pi_0\chi_0$.
\end{lemm}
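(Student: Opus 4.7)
La strat\'egie est de ramener le twist par $\chi$ \`a une op\'eration sur les modules de Hecke, puis de v\'erifier, via l'isomorphisme $\t$ de \eqref{isotheta}, qu'elle correspond au twist par $\chi_0$ sur $\Hh_\flb(\G_0,\K^1)$. On conclura en appliquant un quasi-inverse de $\MM_1$ pour remonter aux repr\'esentations.

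D'abord, $\chi$ \'etant non ramifi\'e, il est trivial sur tout pro-$p$-sous-groupe ouvert compact de $\G$, en particulier sur $\J^1$; ainsi $\pi\chi$ reste dans $\Rep_\flb(\G,\bn)$ et l'on a une identification d'espaces vectoriels $\MM_\bn(\pi\chi)=\MM_\bn(\pi)$, l'action \`a droite de $\Hh_\flb(\G,\bn)$ \'etant pr\'ecompos\'ee par l'automorphisme $t_\chi$ d\'efini par $\Psi\mapsto\chi(g)\Psi$ pour $\Psi$ \`a support dans $\J^1g\J^1$ (bien d\'efini car $\chi|_{\J^1}=1$). On d\'efinit de m\^eme un automorphisme $t_{\chi_0}$ de $\Hh_\flb(\G_0,\K^1)$, de sorte que $\MM_1(\pi_0\chi_0)=\MM_1(\pi_0)\circ t_{\chi_0}$. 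L'\'enonc\'e se r\'eduit donc \`a l'identit\'e $\t\circ t_{\chi_0}=t_\chi\circ\t$.

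Pour \'etablir cette derni\`ere, le fait \ref{chourico} fournit exactement l'information n\'ecessaire sur les \'el\'ements $b={\rm diag}(1,\dots,1,\w_\C,\dots,\w_\C)\in\G_0$\,: $\t$ envoie l'espace des fonctions \`a support dans $\K^1b\K^1$ dans celui des fonctions \`a support dans $\J^1b\J^1$, et comme $b\in\G_0\subseteq\G$ on a $\chi(b)=\chi_0(b)$, si bien que $t_\chi$ et $t_{\chi_0}$ agissent par le m\^eme scalaire sur les deux espaces mis en correspondance.

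Le point d\'elicat sera de prolonger cette compatibilit\'e \`a l'alg\`ebre enti\`ere. Les matrices $b$ fournies par le fait \ref{chourico} ne parcourent pas un syst\`eme complet de repr\'esentants des doubles classes $\K^1\backslash\G_0/\K^1$; il faudra donc soit recourir \`a la construction explicite de $\t$ dans \cite{gianmarco3} pour obtenir une pr\'eservation des supports sur les doubles classes restantes, soit observer que tout caract\`ere non ramifi\'e de $\G_0$ \'etant factoris\'e par la norme r\'eduite, il est enti\`erement d\'etermin\'e par sa valeur sur un uniformisant, ce qui permet de ramener la v\'erification aux seules doubles classes trait\'ees par le fait \ref{chourico} (combin\'ees avec l'invariance de tout caract\`ere non ramifi\'e sous le sous-groupe compact $\K^0$). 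Une fois $\t\circ t_{\chi_0}=t_\chi\circ\t$ acquise, l'\'equivalence $\MM_1$ livre l'isomorphisme $\GG(\pi\chi)\simeq\pi_0\chi_0$.
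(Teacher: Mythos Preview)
Your overall strategy --- translate the twist by $\chi$ into a twist of the Hecke module, then transport it through $\theta$ --- is the same as the paper's, and the auxiliary statement you invoke is exactly Lemma~\ref{Lemme 4.1.4}. Two minor points: the identification $\MM_\bn(\pi\chi)=\MM_\bn(\pi)$ is not a literal equality of vector spaces and requires the explicit isomorphism of Lemma~\ref{Lemme 4.1.4}; and in the paper's conventions the action is precomposed by $\Psi\mapsto\chi^{-1}\Psi$, not $\chi\Psi$ (this is harmless, as the same sign appears on both sides).

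The substantive difference lies in how the paper handles your ``point d\'elicat''. You aim to prove the algebra identity $\theta\circ t_{\chi_0}=t_\chi\circ\theta$ and correctly note that Fait~\ref{chourico} only controls certain double cosets. Your option (b) --- an unramified character is determined by its value at a uniformizer --- does not by itself close the gap: to apply it you would already need to know that $\theta^{-1}\circ t_\chi\circ\theta$ is of the form $t_{\mu_0}$ for some unramified $\mu_0$, which is precisely what is not obvious. The paper sidesteps the algebra identity entirely by reversing the logic. Since $\pi\chi\in\Omega$, Proposition~\ref{urgence} guarantees that $\GG(\pi\chi)$ lies in $\Rep_\flb(\G_0,\Omega_0)$ and is therefore isomorphic to $\pi_0\mu_0$ for \emph{some} unramified character $\mu_0$ of $\G_0$. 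The problem is now merely to determine $\mu_0$. Applying $\MM_1$ and Lemma~\ref{Lemme 4.1.4} on both sides and evaluating on the single function $\Psi_0$ supported on $\K^1 b\K^1$ with $b={\rm diag}(1,\dots,1,\w_\C)$ yields $\mu_0(b)=\chi(b)$; since both characters factor through the reduced norm, this forces $\mu_0=\chi_0$. Thus Proposition~\ref{urgence} supplies exactly the missing ingredient that your option (b) was reaching for, and the full commutation $\theta\circ t_{\chi_0}=t_\chi\circ\theta$ is never needed.
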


Posons $\Hh=\Hh_\flb(\G,\n)$. 
Si $f\in\ind^\G_{\J^1}(\n)$ et si $\chi$ est un caractère non ramifié
de $\G$,~on note $\chi f$ la fonction $g\mapsto\chi(g)f(g)$ de
$\ind^\G_{\J^1}(\n)$.
Si $\Psi\in\Hh$,
on note ${\chi}\Psi$ la fonction $f\mapsto\chi(g)\Psi(g)$ de $\Hh$.

\begin{rema}
\label{chak}
On observera que,
si $f\in\ind^\G_{\J^1}(\n)$ a pour support $\J^1 g$ pour un $g\in\G$,
alors $\chi f$ est simplement égale à $\chi(g)f$.
De façon analogue,
si $\Psi\in\Hh$ a pour support $\J^1 g\J^1$ pour un $g\in\G$,
alors $\chi\Psi$ est simplement égale à $\chi(g)\Psi$.
\end{rema}

Il est commode d'introduire la définition suivante. 

\begin{defi}
Si $\M$ est un $\Hh$-module à droite 
et si $\chi$ est un caractère non ramifié de $\G$,~on
note $\M\chi$ le $\Hh$-module à droite
d'espace sous-jacent $\M$,
muni de l'action de $\Hh$ donnée par~:
\begin{equation*}
(v,\Psi) \mapsto v \cdot ({\chi^{-1}}\Psi),
\quad
v\in\M,
\quad
\Psi\in\Hh,
\end{equation*}
où $\cdot$ désigne l'action de $\Hh$ sur $\M$.
\end{defi}

Le lemme suivant justifie la définition précédente. 

\begin{lemm}
\label{Lemme 4.1.4}
Soit $\pi$ une représentation dans $\Rep_{\flb}(\G,\n)$,
et soit $\chi$ un caractère non ramifié de $\G$.
Les $\Hh$-modules $\MM_{\bn}(\pi \chi)$ et $\MM_{\bn}(\pi) \chi$
sont isomorphes.  
\end{lemm}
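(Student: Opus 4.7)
The plan is to construct the isomorphism explicitly via pointwise multiplication by $\chi$. Define
\[
\Phi : \MM_{\bn}(\pi)\chi \;\longrightarrow\; \MM_{\bn}(\pi\chi), \qquad \Phi(h)(f) := h(\chi f),
\]
where $\chi f$ is the pointwise twist introduced just before the lemma. The first verification is that $\chi f$ still belongs to $\ind^\G_{\J^1}(\bn)$: this uses the crucial fact that $\chi$ restricts trivially to $\J^1$, which holds because $\J^1$ is a pro-$p$ group, $\chi$ is a smooth character with values in $\flb^\times$, and $\ell \neq p$; thus $(\chi f)(\gamma x) = \chi(\gamma)\chi(x)\bn(\gamma)f(x) = \bn(\gamma)(\chi f)(x)$ for $\gamma \in \J^1$.

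Next I would check that $\Phi(h)$ is $\G$-equivariant as a map into $\pi\chi$. The pointwise identity
\[
\chi(g\cdot f) \;=\; \chi(g)\,\bigl(g\cdot(\chi f)\bigr) \qquad \text{in } \ind^\G_{\J^1}(\bn)
\]
is a direct calculation, and combined with the $\G$-equivariance of $h$ into $\pi$ it gives $\Phi(h)(g\cdot f) = \chi(g)\pi(g)\Phi(h)(f) = (\pi\chi)(g)\Phi(h)(f)$, as required. The central computation is the Hecke-equivariance. Expanding the convolution and using $\chi(h)\chi(h^{-1}x)=\chi(x)$, one obtains the identity
\[
(\chi\Psi)*(\chi f) \;=\; \chi(\Psi*f)
\]
in $\ind^\G_{\J^1}(\bn)$, which says precisely that pointwise multiplication by $\chi$ is a ring automorphism of $\Hh$ compatible with the induced-module action. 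Translating this identity through $\Phi$ yields $\Phi(h*_\chi\Psi) = \Phi(h)\cdot\Psi$, so $\Phi$ is an $\Hh$-module map. Bijectivity is immediate, since $h' \mapsto \bigl(f\mapsto h'(\chi^{-1}f)\bigr)$ is a two-sided inverse for $\Phi$.

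There is no real obstacle here; the proof is a mechanical verification once the formula $\Phi(h)(f) = h(\chi f)$ is written down. The only genuine ingredient, beyond bookkeeping with the two twist operations (pointwise multiplication by $\chi$ on $\Hh$ and on $\ind^\G_{\J^1}(\bn)$), is the triviality of $\chi$ on the pro-$p$ group $\J^1$. This is what simultaneously makes $f\mapsto\chi f$ a well-defined operator on $\ind^\G_{\J^1}(\bn)$, turns $\Psi\mapsto\chi\Psi$ into a ring automorphism of $\Hh$, and forces these two operations to be compatible in the way needed for the convolution identity above.
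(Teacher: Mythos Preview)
Your approach is essentially the paper's own: the paper's map $\h\mapsto\h^*$, once unwound, is exactly $\h^*(f)=\h(\chi f)$, i.e.\ the inverse of your $\Phi$ written through Frobenius reciprocity rather than directly. However, two points in your write-up are incorrect and need fixing.

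First, your justification for $\chi|_{\J^1}=1$ is wrong. A smooth character of a pro-$p$ group with values in $\flb^\times$ need \emph{not} be trivial: since $\ell\neq p$, the group $\flb^\times=\overline{\mathbb F}_\ell^{\,\times}$ contains all $p$-power roots of unity, so for instance $\mathbb Z_p$ has nontrivial smooth $\flb^\times$-valued characters. The correct (and simpler) reason is that $\chi$ is \emph{unramified}, hence trivial on every compact subgroup of $\G$, in particular on $\J^1$.

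Second, there is a sign error. With the right-translation action $(g\cdot f)(x)=f(xg)$ used in the paper, one has
\[
\chi(g\cdot f)=\chi(g)^{-1}\,\bigl(g\cdot(\chi f)\bigr),
\]
not $\chi(g)\,(g\cdot(\chi f))$ as you wrote. Consequently your map $\Phi(h)(f)=h(\chi f)$, applied to $h\in\Hom_\G(\ind^\G_{\J^1}(\bn),\pi)$, lands in $\Hom_\G(\ind^\G_{\J^1}(\bn),\pi\chi^{-1})$, not in $\MM_\bn(\pi\chi)$. The fix is to set $\Phi(h)(f)=h(\chi^{-1}f)$; then the $\G$-equivariance into $\pi\chi$ goes through, and your convolution identity $(\chi^{-1}\Psi)*(\chi^{-1}f)=\chi^{-1}(\Psi*f)$ combines with the paper's definition of the twisted action (via $\chi^{-1}\Psi$) to give the Hecke-equivariance exactly as you outlined. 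With these two corrections the argument is complete and coincides with the paper's.
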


\begin{proof}
Pour tout vecteur $w$ dans l'espace de $\n$,
on note $\i_{w}$ l'élément de $\ind^\G_{\J^1}(\n)$~de support $\J^1$
prenant la valeur $w$ en $1$.
Par réciprocité de Frobenius,
il correspond à tout~morphis\-me
$\h\in\Hom_\G(\ind^\G_{\J^1}(\n),\pi\chi)$ le morphisme~:
\begin{equation*}
w \mapsto \h(\i_{w})
\end{equation*}
de $\Hom_{\J^1}(\n,\pi\chi)$.
Réciproquement, à tout $\psi\in\Hom_{\J^1}(\n,\pi\chi)$ correspond
le morphisme~:
\begin{equation*}
f \mapsto \sum\limits_{g} \chi(g)^{-1} \pi(g)^{-1} \psi(\i_{f(g)})
\end{equation*}
de $\Hom_\G(\ind^\G_{\J^1}(\n),\pi\chi)$,
où $g$ décrit un système de représentants de $\J^1\backslash\G$.
Les $\flb$-espaces vectoriels 
$\Hom_{\J^1}(\n,\pi\chi)$ et $\Hom_{\J^1}(\n,\pi)$
étant égaux, on peut associer à $\h$ le morphisme~:
\begin{equation*}
\h^* : f \mapsto \sum\limits_{g} \pi(g)^{-1}\h(\i_{f(g)})
\end{equation*}
de $\Hom_\G(\ind^\G_{\J^1}(\n),\pi)$.
On définit de cette façon un isomorphisme d'espaces vectoriels
$\h\mapsto\h^*$~de $\MM_{\bn}(\pi \chi)$ vers $\MM_{\bn}(\pi) \chi$, 
et~nous allons vérifier que c'est un isomorphisme de $\Hh$-modules.

Soit $\Psi\in\Hh$.
Il s'agit de prouver que, pour tout $w$ dans l'espace de $\n$,
on a~:
\begin{equation}
\label{LEF}
(\h \cdot\Psi)(\i_w) = (\h^*\cdot\chi^{-1}\Psi)(\i_w).
\end{equation}
Posons $f=\Psi*\i_{w}\in\ind^\G_{\J^1}(\n)$,
qui n'est autre que la fonction $g\mapsto\Psi(g)w$.
On a~:
\begin{equation*}
{\chi^{-1}}\Psi(\i_{w}) = (\chi^{-1}\Psi)*\i_{w} = \chi^{-1}f.
\end{equation*}
Le membre de droite de \eqref{LEF} donne~:
\begin{equation*}
\h^*(\chi^{-1}f)
= \sum\limits_{g} \pi(g)^{-1} \h(\i_{\chi(g)^{-1} f(g)})
= \sum\limits_{g} \chi(g)^{-1}\pi(g)^{-1} \h(\i_{f(g)})
= \h(f)
\end{equation*}
ce qui prouve le résultat escompté. 
\end{proof}

Posons $\Hh_0=\Hh_\flb(\G_0,\K^1)$.
On a une propriété analogue au lemme \ref{Lemme 4.1.4}
pour les $\Hh_0$-modules.

Prouvons maintenant le lemme \ref{adieulescons}.
Raisonnant comme au paragraphe \ref{heronseiche},
le fait que $\GG(\pi\chi)$ soit une représentation cuspidale
de $\Rep_\flb(\G_0,\Omega_0)$ entraîne 
qu'il existe un caractère non ramifié 
$\mu_0$ de $\G_0$
tel que $\GG(\pi\chi)$ soit isomorphe à $\pi_0\mu_0$.
Appliquant $\MM_1$,
et~comp\-te tenu du lemme~\ref{Lemme 4.1.4},~on 
obtient un isomorphisme 
$\t^*(\MM_\n(\pi)\chi) \simeq \t^*(\MM_\n(\pi))\mu_0$
de $\Hh_0$-modules.  
Par définition, cela signi\-fie que~:
\begin{equation*}
\t({\mu_0}\Psi_0)={\chi}\t(\Psi_0) 
\end{equation*}
pour tout $\Psi_0\in\Hh_0$.

Supposons maintenant que $\Psi_0$ soit
la fonction caracté\-ristique 
de la double-classe $\K^1b\K^1$ avec
$b={\rm diag}(1,\dots,1,\w_\C)\in\G_0$.
D'après la remarque \ref{chak}, 
on a ${\mu_0}\Psi_0=\mu_0(b)\Psi_0$.
Ensuite,
d'après~le fait \ref{chourico},
la fonction $\Psi=\t(\Psi_0)$ a pour support $\J^1b\J^1$.
D'après la remarque \ref{chak} à nouveau, 
on a donc $\chi\Psi=\chi(b)\Psi$.
On en déduit que $\mu_0(b)=\chi(b)$.

Le caractère $\chi$ est de la forme
$\a\circ{\rm Nrd}$ où $\a$ est un caractère non ramifié
de $\F^\times$ et ${\rm Nrd}$~dési\-gne la norme réduite de
$\Mat_m(\D)$ sur $\F$.
De façon analogue, 
le caractère $\mu_0$ est de la forme
$\a_0\circ{\rm Nrd}_{\B}$ où~$\a_0$ est un caractère non ramifié
de $\E^\times$ et ${\rm Nrd}_\B$ est la norme réduite de
$\B$ sur~$\E$.
On a~:
\begin{eqnarray*}
\chi(b) &=& \a\circ{\rm Nrd}(b) \\ 
&=& \a\circ\N_{\E/\F}\circ{\rm Nrd}_\B(b) \\ 
&=& \a\circ\N_{\E/\F}({\rm Nrd}_\C(\w_\C)) 
\end{eqnarray*}
et $\w_\E={\rm Nrd}_\C(\w_\C)$
est une uniformisante de $\E$.
Un calcul analogue donne $\mu_0(b)=\a_0(\w_\E)$.
On en déduit que $\a_0=\a\circ\N_{\E/\F}$,
donc que $\mu_0$ est égal à $\chi_0$,
la restriction de $\chi$ à $\G_0$.

\subsection{}

On utilise notre travail ci-dessus
pour obtenir la proposition suivante qui généralise la proposition
\ref{ext1cuspniveau0}.

\begin{prop}
\label{ext1cuspniveaunonnul}
Soient $\pi$, $\pi'$ des $\flb$-représentations cuspidales de $\G$.
Suppo\-sons~que l'es\-pa\-ce $\Ext^1_\G(\pi,\pi')$ soit non nul.
Alors $\pi'\in\B(\pi)$.
\end{prop}

\begin{proof} 
Comme les représentations $\pi'$ et $\pi$ sont dans le même
bloc de $\Rep_\flb(\G)$,
on en déduit que $\pi'$ est dans $\Rep_\flb(\G,\Omega)$.
On peut donc appliquer $\GG$,
ce qui donne~:
$$\Ext^1_{\G_0}(\GG(\pi),\GG(\pi')) \neq \left \{ 0 \right \}$$
dans $\Rep_{\flb}(\G_0,\Omega_0)$,
et $\GG(\pi)$ est isomorphe à $\pi_0$.
D'après la proposition \ref{ext1cuspniveau0}, ceci entraîne que~:
\begin{enumerate}
\item
il existe un entier $j \in \ZZ$ tel que $\GG(\pi')$ soit isomorphe à
$\pi_0^{\phantom{j}}\nu_0^j$,
\item
si $\ell$ ne divise pas $q(\pi_0)-1$, 
alors $\GG(\pi')$ est isomorphe à $\pi_0$,
\end{enumerate} 
où $\nu_0$ est le caractère non ramifié
``valeur absolue de la norme réduite'' de $\G_0$.
D'après le lemme \ref{quicklemma}, on a $q(\pi_0)=q(\pi)$.
D'après le lemme \ref{adieulescons},
et comme la restriction de $\nu$ à $\G_0$ est égale à $\nu_0$, 
on a donc~: 
\begin{enumerate}
\item
il existe un entier $j \in \ZZ$ tel que $\GG(\pi')$ soit isomorphe à
$\GG(\pi\nu^j)$,
\item
si $\ell$ ne divise pas $q(\pi)-1$, 
alors $\GG(\pi')$ est isomorphe à $\GG(\pi)$.
\end{enumerate} 
Le foncteur $\GG$ étant une équivalence de catégories, on trouve le résultat 
annoncé. 
\end{proof}

\section{Le cas général}
\label{sec6}

Dans cette section,
on décompose la catégorie $\rep_{\flb}(\G)$ en blocs.

\subsection{}

On considère à présent une $\flb$-représentation irréductible quelconque $\pi$ 
de $\G$.
D'après la~dé\-fi\-nition \ref{Notation C(pi)},
il lui correspond un ensemble $\B(\pi)$.
Rappelons que, 
si le support supercuspidal de $\pi$ est $\rho_1+\dots+\rho_r$,
alors $\B(\pi)$ est l'ensemble des $\flb$-représentations irrédu\-ctibles de $\G$
dont le support supercuspidal est de la forme~:
\begin{equation}
\label{aprouver}
\rho_1\nu^{j_1}+\dots+\rho_r\nu^{j_r},
\quad
j_1,\dots,j_r\in\ZZ,
\end{equation} 
où, pour chaque $k=1,\dots,r$, l'entier $j_k$ est nul 
si $\ell$ ne divise pas $q(\rho_k)-1$.
Nous allons prouver le résultat suivant,
qui~gé\-né\-ralise la proposition \ref{ext1cuspniveaunonnul}.  
Pour cela, nous nous inspi\-rons de la preuve de
\cite{EH} Theorem 3.2.13.

\begin{prop}
\label{Prop 3.2.1}
Soient $\pi$, $\pi'$ des $\flb$-représentations irréductibles de $\G$.
Supposons que~l'es\-pa\-ce 
$\Ext^1_\G(\pi',\pi)$ soit non nul.
Alors $\pi'\in\B(\pi)$. 
\end{prop}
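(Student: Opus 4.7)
Le plan consiste \`a raisonner par double r\'ecurrence, sur l'entier $i\>0$ et sur la longueur $r$ du support supercuspidal de $\pi$. Pour $i=0$, l'irr\'eductibilit\'e de $\pi$ et $\pi'$ force $\pi'\simeq\pi\in\B(\pi)$. Pour $r=1$, la repr\'esentation $\pi$ est supercuspidale et le r\'esultat est pr\'ecis\'ement la proposition \ref{Prop 4.2.1}. On suppose d\'esormais $i\>1$ et $r\>2$~; le th\'eor\`eme \ref{decbvss} permet de plus de supposer $\pi$ et $\pi'$ dans le m\^eme bloc inertiel $\Rep_\flb(\G,\Omega)$, sans quoi $\Ext^i_\G(\pi',\pi)$ serait d\'ej\`a nul. \'Ecrivons $\scusp(\pi)=\rho_1+\dots+\rho_r$, avec $\rho_1$ une $\flb$-repr\'esentation supercuspidale de $\GL_{m_1}(\D)$.

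Posons $m_2=m-m_1$ et soient $\P$ un sous-groupe parabolique standard de $\G$ de Levi $\M\simeq\GL_{m_1}(\D)\times\GL_{m_2}(\D)$, et $\tau$ un constituant irr\'eductible appropri\'e de $\rho_2\times\dots\times\rho_r$ tel que $\pi$ soit sous-quotient de $\ip_\P^\G(\rho_1\otimes\tau)$, l'existence d'un tel $\tau$ r\'esultant de l'exactitude et de la transitivit\'e de l'induction parabolique. Tous les sous-quotients irr\'eductibles de $\ip_\P^\G(\rho_1\otimes\tau)$ ayant pour support supercuspidal $\scusp(\pi)$, ils appartiennent tous \`a $\B(\pi)$. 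En filtrant cette induite par une s\'erie de composition passant par $\pi$ et en appliquant les suites exactes longues de $\Ext^\bullet_\G(\pi',-)$ associ\'ees, on ram\`ene la non-nullit\'e de $\Ext^i_\G(\pi',\pi)$ soit \`a celle de $\Ext^{i-1}_\G(\pi',\sigma)$ pour un sous-quotient irr\'eductible $\sigma\in\B(\pi)$ --- auquel cas l'hypoth\`ese de r\'ecurrence sur $i$ donne $\pi'\in\B(\sigma)=\B(\pi)$ --- soit \`a celle de $\Ext^i_\G(\pi',\ip_\P^\G(\rho_1\otimes\tau))$.

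Pour ce dernier terme, la seconde adjonction de Bernstein --- valable pour les $\Ext^i$ en coefficients $\flb$ d'apr\`es les travaux de Dat --- identifie $\Ext^i_\G(\pi',\ip_\P^\G(\rho_1\otimes\tau))$ \`a $\Ext^i_\M(\mathbf{r}_{\P^-}^\G(\pi'),\rho_1\otimes\tau)$, o\`u $\mathbf{r}_{\P^-}^\G$ d\'esigne le foncteur de Jacquet relatif au parabolique oppos\'e. Par d\'evissage de $\mathbf{r}_{\P^-}^\G(\pi')$ en constituants irr\'eductibles de la forme $\tau_1\otimes\tau_2$ et par la formule de K\"unneth sur le produit $\M$, la non-nullit\'e fournit des entiers $a+b=i$ et un tel constituant v\'erifiant $\Ext^a_{\GL_{m_1}(\D)}(\tau_1,\rho_1)\neq\{0\}$ et $\Ext^b_{\GL_{m_2}(\D)}(\tau_2,\tau)\neq\{0\}$. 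L'hypoth\`ese de r\'ecurrence sur $r$ donne $\tau_2\in\B(\tau)$, et celle sur $i$ (ou la proposition \ref{Prop 4.2.1} dans le cas extr\^eme $a=0$) donne $\tau_1\in\B(\rho_1)$. Le support supercuspidal de $\pi'$ \'etant la somme de ceux de ses constituants de Jacquet, on conclut $\pi'\in\B(\pi)$. L'obstacle principal sera la v\'erification soigneuse de la seconde adjonction et de la formule de K\"unneth pour les $\Ext^i$ sup\'erieurs en coefficients $\flb$, ainsi que le contr\^ole pr\'ecis des classes $\B$ lors du passage au foncteur de Jacquet, notamment dans les cas extr\^emes o\`u l'un des exposants $a,b$ s'annule ou \'egale $i$.
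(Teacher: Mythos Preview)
Il y a une lacune réelle dans l'étape de dévissage. Vous affirmez que filtrer l'induite $I=\ip_\P^\G(\rho_1\otimes\tau)$ par une série de composition passant par $\pi$ ramène la non-nullité de $\Ext^i_\G(\pi',\pi)$ \emph{soit} à celle de $\Ext^{i-1}_\G(\pi',\sigma)$ pour un $\sigma\in\B(\pi)$, \emph{soit} à celle de $\Ext^i_\G(\pi',I)$. Mais écrivons $\pi\simeq\V/\W$ avec $\W\subset\V\subset I$~: la suite $0\to\W\to\V\to\pi\to0$ donne
\begin{equation*}
\Ext^i_\G(\pi',\V)\to\Ext^i_\G(\pi',\pi)\to\Ext^{i+1}_\G(\pi',\W),
\end{equation*}
et il faut donc aussi contrôler $\Ext^{i+1}_\G(\pi',\W)$, ce que votre récurrence sur $i$ ne permet pas. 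Ce problème n'est pas artificiel~: dès que $\pi$ est cuspidale sans être supercuspidale (ce qui se produit pour $r\>2$), elle n'est \emph{jamais} sous-représentation ni quotient d'une induite parabolique propre, seulement sous-quotient, et le terme en degré $i+1$ apparaît inévitablement.

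L'article contourne cette difficulté par une analyse en six cas selon la cuspidalité de $\pi$ et $\pi'$. Quand $\pi$ n'est pas cuspidale, elle se plonge comme \emph{sous}-représentation d'une induite $\ip_\P^\G(\tau)$ avec $\tau$ cuspidale, ce qui évite tout terme $\Ext^{i+1}$. Le cas délicat est celui où $\pi$ et $\pi'$ sont cuspidales mais $\pi$ non supercuspidale~: l'article y introduit un paramètre de récurrence supplémentaire $k(\pi)$ (le nombre de sous-quotients cuspidaux d'une sous-représentation bien choisie de l'induite) pour traiter le terme $\Ext^{i+1}$, en observant que les sous-quotients irréductibles de $\W$ sont ou bien non cuspidaux (ramenant au cas déjà établi), ou bien cuspidaux avec $k$ strictement plus petit.

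Deux remarques annexes. D'abord, l'adjonction que vous utilisez est la \emph{première} (Frobenius)~: $\Ext^i_\G(\pi',\ip_\P^\G(\rho_1\otimes\tau))\simeq\Ext^i_\M(\pi'_\N,\rho_1\otimes\tau)$ avec le foncteur de Jacquet $\pi'_\N$ relatif à $\P$ et non à $\P^-$~; la seconde adjonction donne $\Hom_\G(\ip_\P^\G(\W),\V)\simeq\Hom_\M(\W,\rp_{\P^-}^\G(\V))$, ce qui n'est pas la situation ici. Ensuite, l'article évite complètement la formule de Künneth pour les $\Ext^i$~: dans le cas où ni $\pi$ ni $\pi'$ ne sont cuspidales, il induit depuis une représentation cuspidale $\tau$ d'un Levi, applique Frobenius, puis utilise le lemme géométrique pour décrire les sous-quotients du module de Jacquet de $\pi'$ et se ramener, facteur par facteur du Levi, aux cas cuspidaux déjà traités.
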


\begin{proof}
Si $\pi$ et $\pi'$ sont cuspidales,
le résultat est donné par la proposition
\ref{ext1cuspniveaunonnul}.~Sup\-po\-sons
maintenant que $\pi'$ soit cuspidale mais pas $\pi$.
Il y a une représentation irréductible~cus\-pi\-dale $\tau$ d'un sous-groupe de
Levi standard propre $\M$ de $\G$ et un sous-groupe parabolique standard
$\P$ de $\G$
de facteur de Levi $\M$ tels que $\pi$ se plonge dans $\ip^\G_\P(\tau)$.
On a une suite exacte courte~:
\begin{equation*}
0 \to \pi \to \ip^\G_\P(\tau) \to \d \to 0
\end{equation*}
qui définit $\d$,
et on en déduit la suite exacte~:
\begin{equation}
\label{seq}
\Hom_\G(\pi',\d) \to \Ext^1_\G(\pi',\pi) \to \Ext^1_\G(\pi',\ip^\G_\P(\tau)).
\end{equation}
Notant $\U$ le radical unipotent de $\P$
et $\pi'_\U$ le module de Jacquet de $\pi'$
relativement au triplet~pa\-ra\-bolique $(\P,\M,\U)$,
on a par adjonction un isomorphisme de $\flb$-espaces vectoriels~:
\begin{equation}
\label{isoext1}
\Ext^1_\G(\pi',\ip^\G_\P(\tau)) \simeq \Ext^1_\M(\pi'_\U,\tau)
\end{equation}
et le membre de droite est nul car $\pi'$ est cuspidale.
Par conséquent, $\Hom_\G(\pi',\d)$ est~non~nul~:
on~en déduit que $\pi'$ a le même support supercuspidal que $\pi$,
donc que $\pi'\in\B(\pi)$. 
Si $\pi$ est~cus\-pi\-dale~mais pas $\pi'$,
on se ramène au cas précédent par passage aux contragrédientes
\textit{via} l'isomor\-phis\-me de $\flb$-espaces vectoriels~:
\begin{equation*}
\Ext^1_\G(\pi',\pi) \simeq \Ext^1_\G(\pi^\vee,\pi'^\vee).
\end{equation*} 
Supposons enfin que ni $\pi$ ni $\pi'$ ne soient cuspidales,
et formons à nouveau la suite~exac\-te \eqref{seq}.
Si $\Hom_\G(\pi',\d)$ est~non~nul, on a $\pi'\in\B(\pi)$.
Sinon, 
$\Ext^1_\G(\pi',\ip^\G_\P(\tau))$ est non nul
et un argument~de dévissage standard implique,
compte tenu de \eqref{isoext1}, 
qu'il y a un sous-quotient irréductible~$\a$~de $\pi'_\U$
tel que $\Ext^1_\M(\a,\tau)$ soit non nul.
Identifions $\M$ à un produit
$\GL_{m_1}(\D)\times\dots\times\GL_{m_l}(\D)$
pour des entiers $m_1,\dots,m_l\>1$ de somme $m$,
et écrivons~:
\begin{equation*}
\Ext^1_\M(\a,\tau) \simeq \Ext^1_{\GL_{m_1}(\D)}(\a_1,\tau_1) \otimes\dots\otimes
\Ext^1_{\GL_{m_l}(\D)}(\a_l,\tau_l) 
\end{equation*}
où $\a\simeq\a_1\otimes\dots\otimes\a_l$ et
$\tau\simeq\tau_1\otimes\dots\otimes\tau_l$.
Pour chaque $i\in\{1,\dots,l\}$,
l'espace $\Ext^1_{\GL_{m_i}(\D)}(\a_i,\tau_i)$ est~non nul.
Comme la représentation $\tau_i$ est cuspidale,
un des cas précédemmment traités~impli\-que que 
$\a_i\in\B(\tau_i)$. 
Par ailleurs, d'après le lemme géométrique (\cite{Datnu} 2.8), on a~:
\begin{equation*}
\sum\limits_{i=1}^{l} \scusp(\a_i) = \scusp(\pi').
\end{equation*}
Il s'ensuit que $\pi'\in\B(\pi)$. 
Ceci met fin à la démonstration de la proposition \ref{Prop 3.2.1}.
\end{proof}

\subsection{}

Notons $\mathscr{B}=\mathscr{B}(\G)$
l'ensemble des $\B(\pi)$ quand $\pi$
parcourt les représentations irréductibles~du groupe $\G$.
\'Enonçons le premier résultat principal de l'article.

\begin{theo}
\label{Prop 3.2.4}
On a une décomposition en blocs~:
\begin{equation}
\label{decb}
\rep_\flb (\G) = \bigoplus\limits_{\B} \rep_\flb (\G,\B)
\end{equation}
où $\B$ décrit les éléments de $\mathscr{B}$,
et où $\rep_\flb (\G,\B)$ est la sous-catégorie pleine
de $\rep_{\flb}(\G)$ formée des représentations dont tous
les sous-quotients irréductibles sont dans $\B$. 
En d'autres termes, 
toute $\flb$-représentation de longueur finie $\V$ de $\G$
admet une unique décomposition~:
\begin{equation}
\label{decbv}
\V=\bigoplus\limits_{\B} \V(\B)
\end{equation}
où $\V(\B)$ désigne la plus grande sous-représentation de $\V$
dont tous les sous-quotients irréductibles sont dans $\B$.
\end{theo}

\begin{proof}
On raisonne par récurrence sur la longueur de $\V$,
le cas de longueur $1$ étant immédiat puisque les
$\B\in\mathscr{B}$ sont disjoints grâce à l'unicité
du support supercuspidal. 

Soit $\V$ une représentation de longueur finie $\>2$ de $\G$,
soit $\pi$ une sous-représentation~irréduc\-tible de $\V$ et
posons $\B=\B(\pi)$. 
La proposition \ref{Prop 3.2.1} assure que,
pour toute représentation $\pi'\in\B$ et toute représentation
$\s\in\Irr(\G)-\B$,
l'espace d'extension $\Ext^1_\G(\pi',\s)$ est nul. 
Par conséquent,
d'après le lemme \ref{vanessa},
la représentation~$\V$~se~dé\-com\-pose en
$\V=\V(\B)\oplus\W$
où $\W$ est la plus~gran\-de~sous-représentation de $\V$ dont
les sous-quo\-tients irréductibles sont hors de $\B$.
Comme $\V(\B)$ est non nul,
la longueur de $\W$ est strictement moindre que celle de $\V$.
On peut donc lui appliquer l'hypothèse de récurrence. 

Enfin, le fait que les facteurs $\rep_{\flb}(\G,\B)$ sont indécomposables est
une conséquence de la~pro\-position \ref{blocindec} et du lemme 
\ref{lemdep}.
\end{proof}

\begin{rema}
\label{Bsingleton6}
Si $\G=\GL_n(\F)$,
la remarque \ref{Bsingleton} montre que des représentations
irréduc\-ti\-bles sont dans le même bloc si et seulement si elles ont le
même support supercuspidal.
\end{rema}

On déduit du théorème \ref{Prop 3.2.4} le résultat suivant, 
qui généralise la proposition \ref{Prop 3.2.1}.

\begin{coro}
\label{Prop 3.2.1 cor}
Soient $\pi$ et $\pi'$ des $\flb$-représentations irréductibles de $\G$.
Supposons qu'il y ait un entier $i \geq 0$ tel que
$\Ext^i_\G(\pi',\pi)$ soit non nul.
Alors $\pi'\in\B(\pi)$. 
\end{coro}

\subsection{}

Comme annoncé à la fin de la section \ref{sec3},
nous terminons cette section par le résultat suivant.

\begin{prop}
\label{blocindecfinal}
Soient $\pi$, $\pi'$ des $\flb$-représentations~ir\-réductibles de $\G$. 
Les assertions~sui\-vantes sont équivalentes~:
\begin{enumerate} 
\item
Il existe une $\qlb$-représentation irréductible entière de $\G$ 
dont la réduction mod $\ell$ contienne à la fois $\pi$ et $\pi'$.
\item
Les ensembles $\B(\pi)$ et $\B(\pi')$ sont égaux. 
\item 
Les représentations $\pi$ et $\pi'$ sont dépendantes. 
\end{enumerate}
\end{prop}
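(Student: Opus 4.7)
The plan is to observe that this three-way equivalence essentially reduces to a chain of implications, two of which have already been established in the paper, so only one genuinely new argument is required.

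First, the equivalence (1) $\iff$ (2) is precisely the content of Proposition \ref{blocindec}, so nothing remains to do there. Next, the implication (1) $\Rightarrow$ (3) is exactly Lemma \ref{lemdep}: if $\pi$ and $\pi'$ both appear in $\rl(\tau)$ for some integral irreducible $\qlb$-representation $\tau$, the argument given there constructs an indecomposable finite length representation (inside a suitable $\zlb$-lattice tensored with $\flb$) containing both as subquotients.

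The only remaining implication is (3) $\Rightarrow$ (2), and this is immediate from the block decomposition already obtained in Theorem \ref{Prop 3.2.4}. Indeed, if $\pi$ and $\pi'$ are both subquotients of an indecomposable representation $\V$ of finite length, then $\V$ must lie entirely inside a single summand $\rep_\flb(\G,\B)$ of the decomposition \eqref{decb}, since any nontrivial decomposition $\V = \bigoplus_\B \V(\B)$ would contradict indecomposability. Therefore all irreducible subquotients of $\V$ belong to the same $\B \in \mathscr{B}$, which forces $\B(\pi) = \B(\pi') = \B$.

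There is no genuine obstacle here: the work has all been done in the preceding sections. The proposition is essentially a tidy-up statement that closes the loop between the three a priori different notions (common appearance in a mod $\ell$ reduction, membership in the same class $\B$, and dependence in the sense of Definition \ref{defdep}), making explicit that the block decomposition of Theorem \ref{Prop 3.2.4} is simultaneously the finest one compatible with both the indecomposability and the reduction mod $\ell$ viewpoints.
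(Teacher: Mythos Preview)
Your proof is correct and follows exactly the same route as the paper: invoke Proposition \ref{blocindec} for (1) $\Leftrightarrow$ (2), Lemma \ref{lemdep} for (1) $\Rightarrow$ (3), and deduce (3) $\Rightarrow$ (2) from the block decomposition of Theorem \ref{Prop 3.2.4}. The paper's proof is slightly terser on the last step, but your spelling out that an indecomposable $\V$ must land in a single summand of \eqref{decb} is precisely the intended argument.
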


\begin{proof}
Les première et seconde assertions sont équivalentes selon la proposition 
\ref{blocindec}, 
et la première implique la troisième selon le lemme \ref{lemdep}.
Il ne reste donc plus qu'à prouver que~la troisième implique l'une 
des deux autres. 
Or il suit du théorème \ref{Prop 3.2.4} que, 
si des représentations $\pi$ et $\pi'$ sont dépendantes, 
alors $\B(\pi')=\B(\pi)$.
\end{proof}

\section{Blocs supercuspidaux} 
\label{sec7}

Soit $\pi$ une $\flb$-représentation supercuspidale de $\G=\GL_m(\D)$.
Notons $\Omega$ sa classe inertielle,~et posons $\B=\B(\pi)$. 
L'objectif de cette section est de prouver le théorème suivant.

\begin{theo}
\label{nerodisepia}
Il existe un corps localement compact non archimédien $\F'$
et une $\F'$-algèbre à division centrale $\D'$ tels que 
les blocs $\Rep_{\flb}(\G,\Omega)$ et $\Rep_{\flb}(\D'^\times,\Omega')$
soient équivalents,
où~$\Omega'$ est la classe inertielle du caractère trivial de 
$\D'^\times$. 
\end{theo}

\subsection{}
\label{dixvarsigma}

Indiquons immédiatement comment déduire du théorème \ref{nerodisepia}
le corollaire suivant.

\begin{coro}
\label{corodisepia}
Notons $\B'$ l'ensemble des représentations irréductibles de $\D'^\times$
dépendantes du caractère trivial.
Alors les blocs $\rep_{\flb}(\G,\B)$ et $\rep_{\flb}(\D'^\times,\B')$
sont équivalents.
\end{coro}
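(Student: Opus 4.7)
The plan is to restrict the equivalence of $\Rep$-categories from Theorem~\ref{nerodisepia} to finite-length representations, and then use the block decomposition of Theorem~\ref{Prop 3.2.4} to isolate the relevant summand. Throughout, we work with the same $\F'$ and $\D'$ furnished by Theorem~\ref{nerodisepia}.

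First, since any equivalence of abelian categories preserves composition series and hence the property of having finite length, the equivalence
\[
\Rep_\flb(\G,\Omega) \simeq \Rep_\flb(\D'^\times,\Omega')
\]
of Theorem~\ref{nerodisepia} restricts to an equivalence $\rep_\flb(\G,\Omega) \simeq \rep_\flb(\D'^\times,\Omega')$. By Theorem~\ref{Prop 3.2.4}, both sides decompose as direct sums of blocks $\rep_\flb(\G,\B(\sigma))$ and $\rep_\flb(\D'^\times,\B(\sigma'))$, indexed by the $\B$-classes of irreducibles lying in $\Omega$ and $\Omega'$, respectively.

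Next, the relation ``$\B(\sigma)=\B(\sigma')$'' coincides, by Proposition~\ref{blocindecfinal}, with the dependence relation: $\sigma$ and $\sigma'$ are subquotients of a common indecomposable representation of finite length. This notion is purely categorical, and therefore preserved by any equivalence of abelian categories. Consequently, the above equivalence matches the block $\rep_\flb(\G,\B)$ with a block of the form $\rep_\flb(\D'^\times,\B(\chi))$, where $\chi$ denotes an irreducible image of $\pi$ in $\Omega'$.

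Finally, since $\chi$ lies in $\Omega'$, there is an integer $j$ such that $\chi \simeq \nu'^j$, where $\nu'$ is the ``absolute value of the reduced norm'' character of $\D'^\times$. Twisting by $\nu'^{-j}$ is an autoequivalence of $\rep_\flb(\D'^\times)$ sending $\rep_\flb(\D'^\times,\B(\chi))$ onto $\rep_\flb(\D'^\times,\B(\mathbf{1})) = \rep_\flb(\D'^\times,\B')$, the last equality being the tautological reformulation of the definition of $\B'$ via Proposition~\ref{blocindecfinal}. Composing these equivalences yields the desired equivalence $\rep_\flb(\G,\B) \simeq \rep_\flb(\D'^\times,\B')$. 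The main point requiring justification is the preservation of the dependence relation by the equivalence, which follows from the categorical characterization in Proposition~\ref{blocindecfinal}; the remaining steps are formal.
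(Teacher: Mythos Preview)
Your approach is essentially the paper's: restrict the equivalence of Theorem~\ref{nerodisepia} to finite-length objects, observe that blocks go to blocks, and then twist to reach the principal block. Your justification via the categorical characterization of dependence (Proposition~\ref{blocindecfinal}) is a slightly more explicit version of the paper's remark that an equivalence sends an indecomposable summand of $\rep_\flb(\G,\Omega)$ to an indecomposable summand of $\rep_\flb(\D'^\times,\Omega')$.

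There is, however, one genuine slip in your final step. You assert that since $\chi\in\Omega'$ there is an integer $j$ with $\chi\simeq\nu'^j$. This is false in general: the unramified $\flb$-characters of $\D'^\times$ form a group isomorphic to $\flb^\times$ (via evaluation at a uniformizer), whereas the powers of $\nu'$ span only the cyclic subgroup generated by the image of $q'^{-1}$ in $\flb^\times$. The fix is immediate and is precisely what the paper does: twist by $\chi^{-1}$ rather than by a power of $\nu'$. Twisting by any character of $\D'^\times$ is an autoequivalence of $\rep_\flb(\D'^\times)$ sending $\rep_\flb(\D'^\times,\B(\chi))$ to $\rep_\flb(\D'^\times,\B(1))=\rep_\flb(\D'^\times,\B')$, and your argument then concludes as written.
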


\begin{proof}
L'équivalence de catégories du théorème \ref{nerodisepia}
préserve le fait d'être de lon\-gueur finie.
Elle envoie donc $\rep_{\flb}(\G,\B)$ sur un bloc de 
$\rep_{\flb}(\D'^\times)$ contenu dans $\Rep_{\flb}(\D'^\times,\Omega')$.
Un tel bloc est de la forme $\rep_{\flb}(\D'^\times,\B'')$,
où $\B''$ est égal à
$\B(\chi)$ pour un caractère non ramifié $\chi$ de $\D'^\times$.
Il suffit~alors d'appliquer le foncteur
de torsion par $\chi^{-1}$,
qui induit une équivalence 
entre $\rep_{\flb}(\D'^\times,\B'')$ et $\rep_{\flb}(\D'^\times,\B')$.
\end{proof}

En outre,
la proposition \ref{urgence} montre que,
pour prouver le théorème \ref{nerodisepia},
il suffit de le faire~dans le cas où $\pi$ est de niveau $0$,
ce que nous supposerons désormais. 

Nous allons construire un progénérateur de type fini du bloc 
$\Rep_{\flb}(\G,\Omega)$ et calculer l'algèbre de ses endomor\-phismes. 

\subsection{}
\label{progene}

Fixons un type $(\N,\xi)$ dont l'induite compacte à $\G$ soit isomorphe à 
$\pi$, comme au paragraphe \ref{consnxi3}.
Soit $\xi^0$ la restriction de $\xi$ à $\N^0$,
et soit $\P^0$ l'enveloppe projective de $\xi^0$ dans
$\rep_{\flb}(\N^0)$. 

\begin{lemm}
L'induite compacte~:
\begin{equation}
\label{petitprog}
\ind^\G_{\N^0} (\P^0)
\end{equation}
est projective et de type fini dans $\Rep_{\flb}(\G,\Omega)$.
\end{lemm}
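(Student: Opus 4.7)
The plan is to establish three things: finite type, projectivity, and membership in the block $\Rep_{\flb}(\G,\Omega)$. Since $\xi^0$ is a finite-dimensional representation of the profinite group $\N^0$, its projective envelope $\P^0$ in $\rep_{\flb}(\N^0)$ is of finite length, hence finite-dimensional. Consequently $\ind^\G_{\N^0}(\P^0)$ is generated over $\G$ by the (finitely many) functions supported on $\N^0$ coming from a basis of $\P^0$, giving finite type.

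For projectivity, I would use that compact induction $\ind^\G_{\N^0}$, being left adjoint to the exact restriction functor via Frobenius reciprocity for open subgroups, preserves projective objects. So it suffices to show that $\P^0$ is projective in the full category $\Rep_{\flb}(\N^0)$, not just in $\rep_{\flb}(\N^0)$. For this I would exploit that $\P^0$ is trivial on the pro-$p$ subgroup $\N^1$ (it is the inflation of the projective envelope of $\s$ in $\rep_{\flb}(\GL_m(\kk_\D))$, by the same argument as in Lemme~\ref{canard}). For any smooth $\flb$-representation $V$ of $\N^0$, any morphism from a $\N^1$-trivial source factors through $V^{\N^1}$, so
\begin{equation*}
\Hom_{\N^0}(\P^0,V) = \Hom_{\GL_m(\kk_\D)}(\P^0, V^{\N^1}).
\end{equation*}
The functor $V\mapsto V^{\N^1}$ is exact on smooth representations since $\N^1$ is pro-$p$ and $\ell\neq p$, and $\P^0$ is projective in the category of $\GL_m(\kk_\D)$-modules, so the composite functor is exact.

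For membership in $\Rep_{\flb}(\G,\Omega)$, I would invoke the block decomposition of Théorème~\ref{decbvss} and use that $\ind^\G_{\N^0}(\P^0)$, being projective, decomposes as a direct sum along blocks; it then suffices to verify that every irreducible quotient $\pi'$ of $\ind^\G_{\N^0}(\P^0)$ has supercuspidal support in $\Omega$. Frobenius reciprocity gives a nonzero $\N^0$-equivariant map $\P^0\to\pi'|_{\N^0}$, which lands in $(\pi')^{\N^1}$ (finite-dimensional by admissibility), and since $\P^0$ is the projective envelope of $\s$ in $\rep_{\flb}(\GL_m(\kk_\D))$, the representation $\s$ appears as a composition factor of $(\pi')^{\N^1}$. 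The main obstacle is the last step: deducing from this that $\pi'\in\Omega$. This uses the type theory of \cite{MSt}: since $\s$ is supercuspidal (by Fait~\ref{bitcoin0}), any irreducible $\pi'$ whose restriction to $\N^0$ contains $\xi^0$ is a level-zero supercuspidal representation, and Proposition~\ref{remtsme0}(2) together with the fact that Galois conjugates of $\xi^0$ are contained in the same class of supercuspidal representations forces $\pi'$ to be inertially equivalent to $\pi$, i.e.\ $\pi'\in\Omega$.
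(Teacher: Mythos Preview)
Your proof is correct and in places more carefully argued than the paper's. For projectivity you rightly observe that one must check $\P^0$ is projective in the full category $\Rep_{\flb}(\N^0)$, not merely in $\rep_{\flb}(\N^0)$, and your argument via the exact functor $V\mapsto V^{\N^1}$ is the clean way to do this; the paper simply asserts projectivity without comment.

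For membership in $\Rep_{\flb}(\G,\Omega)$ the two proofs take genuinely different routes. The paper treats \emph{every} irreducible subquotient $\pi'$ of $\Pi$: it writes out the Mackey decomposition of $\Pi|_{\N^0}$ indexed by diagonal matrices ${\rm diag}(\w_\D^{k_1},\dots,\w_\D^{k_m})$, uses cuspidality of $\s$ to show that only the terms with $k_1=\dots=k_m$ can carry $\N^1$-invariants, and concludes that any irreducible constituent of $(\pi')^{\N^1}$ is a conjugate $\xi^{0g}$; the result then follows from \cite{SEns}~Proposition~8.1. Your route is shorter: since $\Pi$ is projective and of finite type, each nonzero block component is finitely generated and so has an irreducible quotient, hence it suffices to handle irreducible \emph{quotients} of $\Pi$, and for those Frobenius reciprocity gives a nonzero map $\P^0\to\pi'$ directly, bypassing the Mackey computation entirely.

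One small imprecision: you say $\s$ appears as a \emph{composition factor} of $(\pi')^{\N^1}$, but the type-theoretic input requires $\xi^0$ as a \emph{subrepresentation} of $\pi'|_{\N^0}$. Your argument actually gives this: the image of $\P^0$ in $(\pi')^{\N^1}$ is a nonzero subrepresentation all of whose composition factors are $\s$, so its socle already contains $\s$. With $\xi^0\subset\pi'|_{\N^0}$ in hand, the characterisation in paragraph~\ref{flapflip}(1) (which for level zero reads $\J^0=\N^0$, $\bl=\xi^0$) yields $\pi'\in\Omega$ immediately; the detour through Proposition~\ref{remtsme0}(2) and Galois conjugates is unnecessary.
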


\begin{proof}
Elle est projective en tant qu'induite compacte d'une représentation 
projective,
et de type fini en tant qu'induite compacte d'une représentation de
dimension finie.

Il reste à prouver que cette représentation, qu'on note $\Pi$,
appartient au bloc $\Rep_{\flb}(\G,\Omega)$.~Soit
$\pi'$ un de ses sous-quotients irréductibles.
Celui-ci est de niveau $0$~:~la~re\-présentation de
$\GL_m(\kk_\D)$ sur l'espace des
vecteurs de $\pi'$ invariants par $\N^1$ est non nulle.
Fixons-en une sous-représentation irréductible $\tau$,
ce qui est possible car $\pi'$ est admissible.  
La restriction de $\Pi$ à $\N^0$ se décompose en la somme directe~:
\begin{equation*}
\bigoplus\limits_{g} \ind^{\N^0}_{\N^0\cap\N^{0g}} (\P^{0g})
\end{equation*}
où $g$ décrit les matrices diagonales de $\G$ de la forme
${\rm diag}(\w_\D^{k_1},\dots,\w_\D^{k_m})$
où $k_1,\dots,k_m$ sont~des entiers relatifs tels que $k_1\>\dots\>k_m$.
Il y a donc un $g$ tel que $\tau$ apparaisse comme sous-quotient
irréductible de $\ind^{\N^0}_{\N^0\cap\N^{0g}} (\P^{0g})$.
Pour que cette induite ait des vecteurs non nuls invariants~par $\N^1$,
il faut et suffit que l'induite
$\ind^{\N^0}_{\N^0\cap\N^{0g}} (\xi^{0g})$ en ait également
(rappelons que les sous-quotients de $\P^0$ sont tous isomorphes à $\xi^0$
d'après \cite{Vigb} III.2.9).
La représentation~$\s$ du groupe $\GL_m(\kk_\D)$ dont $\xi^0$
est l'inflation étant cuspidale (elle est même supercuspidale),
ceci n'est possible que si $g$ normalise $\N^0$,
\ie si $k_1=\dots=k_m$.
Supposons que ce soit le cas,
\ie qu'on ait $g=\w_\D^k$ pour un $k\in\ZZ$.
Alors $\tau$ est un sous-quotient
irréductible de $\P^{0g}$, donc $\tau$
est isomor\-phe à $\xi^{0g}$ 
et $\pi'$ est un sous-quotient irréductible de l'induite
compacte $\ind^\G_{\N^0}(\xi^0)$.
Le ré\-sultat voulu suit maintenant de \cite{SEns} Proposition 8.1
(voir aussi la fin du paragraphe \ref{supertypes}).
\end{proof}

Rappelons
(voir par exemple \cite{Pareigis} 4.11)
qu'un objet projectif et de type fini $\Pi$ de $\Rep_{\flb}(\G,\Omega)$ est un
progénérateur si toute représentation irréductible de 
$\Rep_{\flb}(\G,\Omega)$, \ie toute~re\-pré\-sentation irréductible
inertiellement équivalente à $\pi$,
est isomorphe à un quotient de $\Pi$.

\begin{prop}
\label{propetitprog}
La représentation $\ind^\G_{\N^0} (\P^0)$ est
un progénérateur de $\Rep_{\flb}(\G,\Omega)$.
\end{prop}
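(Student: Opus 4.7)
The preceding lemma establishes that $\Pi = \ind^\G_{\N^0}(\P^0)$ is projective of finite type in $\Rep_{\flb}(\G,\Omega)$, so to conclude it only remains to check that every irreducible object of this block appears as a quotient of $\Pi$. Such an irreducible is, by definition of the inertial class $\Omega$, of the form $\pi\chi$ for some unramified $\flb$-character $\chi$ of $\G$.

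The plan is to produce a nonzero $\G$-morphism $\Pi \to \pi\chi$ and conclude by irreducibility of $\pi\chi$. By Frobenius reciprocity for compact induction, we have
\begin{equation*}
\Hom_{\G}\left(\ind^\G_{\N^0}(\P^0), \pi\chi\right) \simeq \Hom_{\N^0}\left(\P^0, (\pi\chi)|_{\N^0}\right).
\end{equation*}
Since $\chi$ is unramified on $\G = \GL_m(\D)$, it factors through $\nu$ and is in particular trivial on the maximal compact subgroup $\N^0 = \GL_m(\Oo_\D)$; consequently $(\pi\chi)|_{\N^0}$ coincides with $\pi|_{\N^0}$ as an $\N^0$-representation, so it suffices to exhibit a nonzero $\N^0$-morphism $\P^0 \to \pi|_{\N^0}$.

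For this, we use the description $\pi \simeq \ind^\G_\N(\xi)$ from paragraph \ref{consnxi3}, together with the canonical $\N$-equivariant embedding $\xi \hookrightarrow \pi|_\N$ sending $v$ to the function supported on $\N$ and taking the value $\xi(n)v$ at $n \in \N$. Restricting to $\N^0$ yields an $\N^0$-equivariant embedding $\xi^0 \hookrightarrow \pi|_{\N^0}$. Composing this inclusion with the canonical surjection $\P^0 \twoheadrightarrow \xi^0$, which exists because $\P^0$ is a projective cover of $\xi^0$ in $\rep_{\flb}(\N^0)$, produces a nonzero element of $\Hom_{\N^0}(\P^0, \pi|_{\N^0})$; this element corresponds under the isomorphism above to a nonzero $\G$-morphism $\Pi \to \pi\chi$, which is surjective since its target is irreducible.

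No serious obstacle is anticipated in this argument; the essential ingredients are the defining property of the projective cover $\P^0$, the triviality of unramified characters on the maximal compact subgroup, and Frobenius reciprocity for compact induction.
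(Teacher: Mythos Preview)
Your proof is correct and takes essentially the same approach as the paper's. The paper phrases it slightly differently---using that compact induction is exact to pass from the surjection $\P^0\twoheadrightarrow\xi^0$ to a surjection $\ind^\G_{\N^0}(\P^0)\twoheadrightarrow\ind^\G_{\N^0}(\xi^0)$, then noting that $\pi\chi$ is a quotient of $\ind^\G_{\N^0}(\xi^0)$ because $\chi$ is trivial on $\N^0$---but this is the same argument as yours unwound through Frobenius reciprocity, with identical ingredients.
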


\begin{proof}
\'Etant donné un $\flb$-caractère non ramifié $\chi$ de $\G$,
il s'agit de prouver que $\pi\chi$ est un quotient de \eqref{petitprog}.
La représentation $\xi^0$ étant un quotient de $\P^0$,
et le foncteur d'induction compacte étant exact,
il suffit de prouver que $\pi\chi$ est un quotient de l'induite compacte
de $\xi^0$ à $\G$,
ce qui suit de ce que $\pi$ est un quotient de ladite induite et
$\chi$ est trivial sur $\N^0$.
\end{proof}

\begin{rema}
\label{remD1}
{Dans le cas particulier où $\pi$ est le caractère trivial de $\G=\D^\times$,
la représentation $\P^0$ est l'induite à $\Oo_\D^\times$ du caractère trivial de 
$\U_\D^{(\ell)}$,
le plus petit sous-groupe ouvert de~$\Oo_\D^\times$ dont~l'indice est une
puissance de $\ell$.
Le progénérateur \eqref{petitprog} de la proposition \ref{propetitprog} est
donc~l'in\-duite com\-pacte~du caractère trivial de $\U_\D^{(\ell)}$ à $\D^\times$.}
\end{rema}

Nous allons maintenant calculer l'algèbre des endomorphismes
du progénérateur $\ind^\G_{\N^0} (\P^0)$.

\subsection{}
\label{datrappel}

Notons $\GB$ le groupe réductif fini $\GL_m(\kk_\D)$
et $\s$ la représentation supercuspidale de $\GB$ dont~$\xi^0$
est l'inflation.
Fi\-xons~une extension $\boldsymbol{t}$ de $\kk_\D$ dans $\Mat_m(\kk_\D)$ 
de degré $m$,
de façon à voir le~groupe multiplicatif
$\TB=\boldsymbol{t}^\times$ comme un tore de $\GB$.
Notons $\vv$ la~valuation $\ell$-adique de $q^n-1$.
La~com\-po\-san\-te $\ell$-primaire $\SB$ de $\TB$
est~donc cy\-cli\-que d'ordre $\ell^\vv$.
Soit $\Sigma$ la $\zlb$-représentation projective de $\GB$ telle que 
la $\flb$-représentation $\Sigma\otimes\flb$ soit 
une enveloppe pro\-jec\-tive de $\s$ dans $\rep_{\flb}(\GB)$.
On a le résultat suivant (Dat \cite{Datltna} Proposition B.1.2).

\begin{prop}
\label{datdl}
Il y a un isomorphisme de $\zlb$-algèbres 
$\End(\Sigma)\simeq\zlb[\SB]$.
\end{prop}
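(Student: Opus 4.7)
Le plan est d'utiliser la param\'etrisation de Green des repr\'esentations cuspidales irr\'eductibles de $\GB = \GL_m(\kk_\D)$ combin\'ee \`a la th\'eorie des vari\'et\'es de Deligne-Lusztig. Puisque $\s$ est supercuspidale, la classification des $\flb$-supercuspidales de $\GL_m$ sur un corps fini (James, Dipper-James, Vign\'eras) fournit un $\flb$-caract\`ere r\'egulier $\theta : \TB \to \flb^\times$, c'est-\`a-dire de stabilisateur trivial sous $W = \Gal(\boldsymbol{t}/\kk_\D) \simeq \ZZ/m\ZZ$, tel que $\s$ corresponde \`a l'orbite $W\theta$.

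Je d\'ecris d'abord $\Sigma \otimes \qlb$. L'ensemble $\mathcal{L}$ des $\qlb$-caract\`eres de $\TB$ qui rel\`event $\theta$ est un torseur sous $\hat{\SB}$, donc de cardinal $\ell^\vv = |\SB|$. La r\'eduction mod $\ell$ \'etant $W$-\'equivariante, chaque $\tilde\theta' \in \mathcal{L}$ est r\'egulier, et la th\'eorie de Deligne-Lusztig lui associe une repr\'esentation cuspidale $\qlb$-irr\'eductible $\pi_{\tilde\theta'}$ de $\GB$. La formule des caract\`eres de Deligne-Lusztig, restreinte aux classes $\ell$-r\'eguli\`eres, montre que la r\'eduction mod $\ell$ de $\pi_{\tilde\theta'}$ est isomorphe \`a $\s$. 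Par ailleurs, des \'el\'ements distincts de $\mathcal{L}$ donnent des $W$-orbites distinctes (par r\'egularit\'e de $\theta$), donc des repr\'esentations non isomorphes. L'analogue pour le groupe fini $\GB$ de la proposition \ref{propVigneras}(2), donn\'e par la proposition 42 de \cite{Serre}, fournit alors :
\begin{equation*}
\Sigma \otimes \qlb \simeq \bigoplus_{\tilde\theta' \in \mathcal{L}} \pi_{\tilde\theta'},
\quad
\End_{\GB}(\Sigma) \otimes \qlb \simeq \prod_{\tilde\theta' \in \mathcal{L}} \qlb \simeq \qlb[\SB],
\end{equation*}
le dernier isomorphisme s'obtenant par transform\'ee de Fourier finie, le groupe $\hat{\SB}$ agissant librement et transitivement sur $\mathcal{L}$.

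Pour construire un morphisme $\zlb[\SB] \to \End_{\GB}(\Sigma)$, je r\'ealise $\Sigma$, \`a un signe pr\`es, comme un facteur direct de la cohomologie $\ell$-adique $H^*_c(X, \zlb) \otimes_{\zlb[\TB_{\ell'}]} \tilde\theta|_{\TB_{\ell'}}$, o\`u $X$ est la vari\'et\'e de Deligne-Lusztig attach\'ee au tore non d\'eploy\'e $\TB$ dans $\GB$, $\tilde\theta \in \mathcal{L}$ un rel\`evement fix\'e et $\TB_{\ell'}$ la partie d'ordre premier \`a $\ell$ de $\TB$. Le quotient $\SB = \TB/\TB_{\ell'}$ agit alors naturellement sur cette cohomologie en commutant \`a $\GB$, d'o\`u le morphisme cherch\'e. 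Apr\`es tensorisation par $\qlb$, ce morphisme co\"incide avec l'isomorphisme construit \`a l'\'etape pr\'ec\'edente via l'action de $\hat{\SB}$ par torsion de $\tilde\theta$ sur les $\pi_{\tilde\theta'}$.

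Pour conclure, je vais utiliser un argument de Nakayama. Comme $\Sigma$ est un $\zlb[\GB]$-module projectif de type fini, $\End_{\GB}(\Sigma)$ est un $\zlb$-module libre de rang $|\SB|$, et l'on a un isomorphisme $\End_{\GB}(\Sigma) \otimes \flb \simeq \End_{\GB}(\Sigma \otimes \flb)$. Par les r\'esultats structurels de Brou\'e-Michel sur le bloc supercuspidal de $\GL_m(\kk_\D)$ en caract\'eristique $\ell$ (le groupe $\SB$ en \'etant pr\'ecis\'ement le groupe de d\'efaut), cette alg\`ebre est isomorphe \`a $\flb[\SB]$. La r\'eduction mod $\ell$ de notre morphisme est alors le morphisme naturel $\flb[\SB] \to \End_{\GB}(\Sigma \otimes \flb)$, qui est un isomorphisme. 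Par Nakayama, le morphisme initial est surjectif, donc bijectif entre $\zlb$-modules libres de m\^eme rang fini. L'obstacle principal r\'eside dans l'identification rigoureuse de $\Sigma$ avec le facteur ad\'equat de la cohomologie de Deligne-Lusztig \`a coefficients dans $\zlb$ et dans le calcul du bloc modulaire de Brou\'e-Michel fournissant $\End_{\GB}(\Sigma \otimes \flb) \simeq \flb[\SB]$.
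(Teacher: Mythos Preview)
L'article ne d\'emontre pas cette proposition~: il la cite de Dat \cite{Datltna}, Proposition B.1.2, et se contente ensuite d'expliciter comment l'isomorphisme est construit. Pr\'ecis\'ement, la $\zlb$-repr\'esentation $\Sigma$ que Dat consid\`ere (construite via la cohomologie de Deligne--Lusztig) est munie d'une action $\zlb$-lin\'eaire de $\TB$ commutant \`a celle de $\GB$~; le morphisme $\zlb[\TB]\to\End(\Sigma)$ qui en r\'esulte, restreint \`a $\zlb[\SB]$, est l'isomorphisme annonc\'e.

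Votre esquisse suit exactement cette strat\'egie~: r\'ealiser $\Sigma$ comme facteur direct de la cohomologie $\ell$-adique enti\`ere d'une vari\'et\'e de Deligne--Lusztig pour obtenir l'action de $\TB$ (donc de $\SB$), puis contr\^oler la situation apr\`es extension des scalaires \`a $\qlb$ et descendre par Nakayama. Votre d\'ecomposition de $\Sigma\otimes\qlb$ est correcte et co\"incide avec celle de l'article (\'equation \eqref{decPqlb}). Vous identifiez aussi honn\^etement les deux points d\'elicats~: l'identification de $\Sigma$ au bon facteur de la cohomologie \`a coefficients dans $\zlb$ (c'est le travail de Bonnaf\'e--Rouquier et Dat), et la structure du bloc modulaire. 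Sur ce dernier point, invoquer Brou\'e--Michel est l\'egitime mais un peu indirect~; Dat proc\`ede plut\^ot en v\'erifiant que l'application $\zlb[\SB]\to\End(\Sigma)$ devient un isomorphisme apr\`es tensorisation par $\qlb$ (ce que vous avez fait) et en observant que les deux membres sont des $\zlb$-modules libres de m\^eme rang, le morphisme \'etant alors injectif (car il l'est apr\`es $\otimes\qlb$) donc bijectif --- ce qui \'evite de calculer s\'epar\'ement la r\'eduction mod $\ell$. Votre argument par Nakayama est correct mais demande ce calcul suppl\'ementaire de $\End(\Sigma\otimes\flb)$ que la voie directe contourne.
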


Plus précisément,
une fois fixé un $\flb$-caractère $\Gal(\boldsymbol{t}/\kk_{\F})$-régulier 
$\t$ de $\TB$ correspondant~à~$\s$ par la théorie de Deligne-Lusztig,
la $\zlb$-représentation $\Sigma$ considérée par Dat est 
munie d'une~ac\-tion $\zlb$-linéaire de $\TB$ commutant à celle de $\GB$. 
Il y a donc un homomorphisme~de $\zlb$-algèbres~de~$\zlb[\TB]$
dans $\End(\Sigma)$, induisant par restriction
un isomorphisme~de $\zlb$-algèbres~:
\begin{equation}
\label{isodat}
j : \zlb[\SB] \to \End(\Sigma).
\end{equation}
Par ailleurs, 
il y a une décomposition canonique de $\qlb$-représentations
de $\GB\times\TB$~:
\begin{equation}
\label{decPqlb}
\Sigma \otimes \qlb = \bigoplus\limits_{\a} \V_\a
\end{equation}
indexée sur les $\qlb$-caractères $\a$ de $\SB$,
où $\V_\a$ est isomorphe à $\pi_{\a}\otimes\t\a$,
la représenta\-tion $\pi_{\a}$~étant l'unique
$\qlb$-représenta\-tion cuspidale de $\GB$ relevant $\s$ correspondant
au $\qlb$-caractère $\t\a$
(où $\t$~dé\-si\-gne, par abus de notation,
l'unique relèvement de $\t$ à~$\qlb$ de même ordre que $\t$)
par la théorie~de Deligne-Lusztig.
\'Etendant les scalaires à $\qlb$ dans 
\eqref{isodat},~on~ob\-tient l'isomorphisme canonique de $\qlb$-algèbres~:
\begin{equation*}
\qlb[\SB] \to
\End(\Sigma)\otimes\qlb \simeq \End_{\qlb[\GB]}(\Sigma\otimes\qlb)
\end{equation*}
qui, compte tenu de \eqref{decPqlb}, 
associe à tout élément $f\in\qlb[\SB]$ l'endomorphisme de
$\Sigma\otimes\qlb$~agissant sur
le facteur $\V_\a$ par le scalaire~:
\begin{equation*}
\sum\limits_{x\in\SB} f(x)(\t\a)(x)
=\sum\limits_{x\in\SB} f(x)\a(x)
\end{equation*}
l'égalité provenant de ce que $\t$ est d'ordre premier à $\ell$ et $x$ 
d'ordre divisant $\ell^{\vv}$.

Fixons un~géné\-ra\-teur $\varsigma\in\SB$,
et notons $\tt$ son image dans $\End(\Sigma)$.
On a~: 
\begin{equation}
\label{datdlt}
\End(\Sigma)=\zlb[\tt],
\quad
\tt^{\ell^{\vv}}=1.
\end{equation}
\'Etendant les scalaires à $\qlb$,
l'endomorphisme $\tt$ agit~sur le facteur $\V_\a$ par le scalaire 
$\a(\varsigma) \in \overline{\ZZ}{}_\ell^\times$.

\subsection{}

Notons $\tP^0$ la $\zlb$-représentation~projective de 
$\N$ telle que $\tP^0\otimes\flb$ soit isomorphe à $\P^0$.
\`A~iso\-morphisme près,
c'est~l'in\-flation à $\N^0$ de la~repré\-sen\-tation
$\Sigma$ du paragraphe \ref{datrappel}.
On déduit de~la proposition \ref{datdl} le résultat suivant.

\begin{coro}
\label{datdlcor} 
Il y a un isomorphisme de $\flb$-algèbres 
$\End(\P^0)\simeq\flb[\SB]$. 
\end{coro}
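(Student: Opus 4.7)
The plan is to deduce the corollary from Proposition \ref{datdl} by reduction modulo $\ell$, exploiting the inflation relationship between $\tP^0$ and $\Sigma$.

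First, I would observe that the action of $\N^0 = \GL_m(\Oo_\D)$ on $\tP^0$ factors through the quotient $\N^0/\N^1 \simeq \GB$, where it is given by $\Sigma$. Consequently, every $\N^0$-equivariant endomorphism of $\tP^0$ is a $\GB$-equivariant endomorphism of $\Sigma$, and vice versa, so we have an equality
\begin{equation*}
\End_{\N^0}(\tP^0) = \End_\GB(\Sigma),
\end{equation*}
which by Proposition \ref{datdl} (via the isomorphism \eqref{isodat}) is canonically identified with $\zlb[\SB]$.

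Second, I would reduce modulo $\ell$. Since $\Sigma$ is a finitely generated projective $\zlb\GB$-module (being the projective cover of a finite-dimensional representation of a finite group), the natural base-change map
\begin{equation*}
\End_{\zlb\GB}(\Sigma) \otimes_\zlb \flb \longrightarrow \End_{\flb\GB}(\Sigma \otimes_\zlb \flb)
\end{equation*}
is an isomorphism. Inflating from $\GB$ to $\N^0$ and using that $\tP^0 \otimes \flb \simeq \P^0$, we obtain
\begin{equation*}
\End(\P^0) \simeq \End(\tP^0) \otimes_\zlb \flb \simeq \zlb[\SB] \otimes_\zlb \flb \simeq \flb[\SB]
\end{equation*}
as $\flb$-algebras, which is precisely the statement.

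The argument is essentially formal once Proposition \ref{datdl} is in hand; the only point requiring mild care is the base-change isomorphism for the endomorphism algebra, which is a standard consequence of $\Sigma$ being finitely generated and projective over $\zlb\GB$ (so that $\End(\Sigma)$ is itself a finitely generated $\zlb$-module, even free, and commutes with the flat base change $\zlb \to \flb$). No significant obstacle is expected.
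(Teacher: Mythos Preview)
Your proof is correct and follows essentially the same approach as the paper: deduce the result from Proposition \ref{datdl} together with the base-change isomorphism $\End(\P^0)\simeq\End(\tP^0)\otimes\flb$. The paper simply asserts this last isomorphism without justification, whereas you explain it via the projectivity of $\Sigma$; otherwise the arguments are identical.
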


\begin{proof} 
Le résultat suit de la proposition \ref{datdl}
et du fait que la $\flb$-algèbre $\End(\P^0)$~est
isomorphe à $\End(\tP^0)\otimes\flb$.
\end{proof}

La classe~d'iso\-morphisme de $\tP^0$ est~nor\-ma\-lisée par $\N$ car $\xi^0$ 
l'est. 
Rappelons qu'on a fixé une uniformisante $\w_\D$ de $\D$ telle que
$\w_\D^d=\w_\F^{\phantom{d}}$
et que le groupe $\N$~est engendré par $\N^0$ et $\w=\w_\D^b$,
où~$b$ est le cardinal de l'orbite de $\s$ sous $\Gal(\kk_\D/\kk_{\F})$.
Fixons un isomorphisme de~$\zlb$-re\-pré\-sen\-ta\-tions de $\N$~:
\begin{equation}
\label{fixA}
\aa \in \Hom_{\zlb[\N]}(\tP^0,\tP^{0\w})
\end{equation}
ce qui équivaut à fixer un prolongement 
de $\tP^0$ à $\N$ prenant la valeur $\aa$ en $\w$.
\'Etendons les~sca\-lai\-res et restreignons à $\N^0$ dans \eqref{fixA}. 
Compte tenu de \eqref{decPqlb},
pour tout caractère $\a$ de $\SB$,
il~y~a~un unique caractère $\b$ de $\SB$ tel que
$\aa$ envoie $\V_\a$ sur $\V_\b$, 
\ie tel que le~con\-jugué~de $\pi_\a$ par~$\w$ soit isomorphe à $\pi_\b$.
Ceci définit une permutation $\a\mapsto\b$ entre
$\flb$-caractères de $\SB$. 

Rappelons qu'on a défini (voir la définition \ref{hasse})
l'invariant de Hasse $h$ de $\D$,
qui est un entier de $\{1,\dots,d\}$ premier à~$d$.
La conjugaison par $\w$,
qui est égal à $\w_\D^{b}$,
induit donc sur le~corps~rési\-duel~$\kk_\D$ l'automorphisme~:
\begin{equation*}
x \mapsto x^{q^{hb}}.
\end{equation*}
La représentation cuspidale $\pi_\a$ correspondant au caractère
$\t\a$,~sa con\-juguée par $\w$ correspond au caractère~:
\begin{equation*}
(\t\a)^{q^{hb}}
\end{equation*}
qui est conjugué sous $\Gal(\boldsymbol{t}/\kk_\D)$ à $\t\b$ pour un unique
caractère $\b$ de $\SB$, \ie que~: 
\begin{equation*}
(\t\a)^{q^{hb}} = (\t\b)^{q^{di}}
\end{equation*}
pour un $i\in\ZZ$.
Séparant les facteurs d'ordre~pre\-mier à $\ell$ des facteurs d'ordre une 
puissance de~$\ell$,
on obtient~: 
\begin{equation*}
\t^{q^{hb-di}}=\t
\quad\text{et}\quad
\b = \a^{q^{hb-di}}.
\end{equation*}
D'après \cite{MSjl} Paragraphe 3.4,
l'ordre de $q$ modulo l'ordre de $\t$ est égal à $mb$,
et $m$ est premier à l'entier $s=d/b$.
Ainsi $mb$ divise $hb-di$,
\ie que $i$ est solution de l'équation $si\equiv h$ mod $m$.
Il sera commode de choisir $i$ tel que~:
\begin{equation}
\label{defhp}
h' = \frac {h-si} {m} \in \{1,\dots,s\}
\end{equation}
ce~qui détermine $i$ de façon unique.
On observe que l'entier $h'$ ainsi défini est premier à $s$,
car~$h$ est premier à $d$.
On en déduit le résultat suivant.
Soit $\tt$ le générateur de $\End(\tP^0)$ tel que $\tt^{\ell^a}=1$
provenant du générateur $\varsigma\in\SB$ fixé au paragraphe \ref{datrappel} 
( voir \eqref{datdlt}).  

\begin{lemm}
\label{ata}
\begin{enumerate}
\item 
Pour tout caractère $\a$ de $\SB$, on a~:
\begin{equation*}
\b = \a^{q^{hb-di}} =\a^{q^{mbh'}}.
\end{equation*}
\item
On a l'égalité $\aa\tt\aa^{-1}=\tt^{q^{mbh'}}$.
\end{enumerate}
\end{lemm}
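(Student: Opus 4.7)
The plan is to deduce both assertions from the canonical $\qlb$-decomposition $\Sigma \otimes \qlb = \bigoplus_{\a} \V_{\a}$ of \eqref{decPqlb}, making essential use of the formula recalled at the end of \S\ref{datrappel}: the element $\tt$ acts on the summand $\V_\a$ by the scalar $\a(\varsigma)$.

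For (1), I just rearrange the computation carried out in the paragraph immediately preceding the lemma. The relation $\pi_\a^\w \simeq \pi_\b$ translates, via Deligne-Lusztig theory, into the equality $(\t\a)^{q^{hb}} = (\t\b)^{q^{di}}$ in the character group of $\TB$, for the unique $i$ giving $h' = (h-si)/m \in \{1,\dots,s\}$ as in~\eqref{defhp}. Splitting this equality into its prime-to-$\ell$ part (involving $\t$) and its $\ell$-power part (involving $\a,\b$) produces $\a^{q^{hb}} = \b^{q^{di}}$ among characters of $\SB$. Because $\SB$ has $\ell$-power order and $q$ is invertible modulo $\ell^{\vv}$, raising to the $q^{di}$-th power is an automorphism of the character group of $\SB$, hence $\b = \a^{q^{hb-di}}$. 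The equality $hb - di = mbh'$, which follows from $h - si = mh'$ and $d = bs$, yields $\b = \a^{q^{mbh'}}$.

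For (2), I argue componentwise on the decomposition $\Sigma \otimes \qlb = \bigoplus \V_\a$. The intertwiner $\aa \colon \tP^0 \to \tP^{0\w}$ is $\GB$-equivariant, so Schur's lemma forces it to send each $\V_\a$ onto a single summand $\V_{\a^\sharp}$ of $\tP^{0\w} \otimes \qlb$, and the matching of $\GB$-isotypic components -- once one accounts for the Frobenius twist $x \mapsto x^{q^{hb}}$ on $\kk_\D$ induced by conjugation by $\w$ -- determines $\a^\sharp$ via (1). Conjugation by $\aa$ preserves the subring $\End(\tP^0) \simeq \zlb[\SB]$ of $\End_{\zlb}(\Sigma)$, hence defines a ring automorphism of $\zlb[\SB]$, i.e., a power map $\tt \mapsto \tt^k$ for a unique $k$ invertible modulo $\ell^\vv$. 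To identify $k$ it suffices to compare $\aa\tt\aa^{-1}$ with $\tt^{q^{mbh'}}$ on a single component $\V_{\a^\sharp}$: the first acts by $\a(\varsigma)$ by a direct unwinding using the scalar action of $\tt$ on $\V_\a$, and the second acts by $\a^\sharp(\varsigma)^{q^{mbh'}}$, which coincides with $\a(\varsigma)$ once the substitution coming from (1) is performed, thanks to the cancellation of $q^{mbh'}$ and $q^{-mbh'}$. Since the two endomorphisms agree on every summand, they are equal in $\End(\Sigma)\otimes\qlb$, and as both already belong to $\End(\Sigma) \simeq \zlb[\SB]$ the identity descends to $\zlb$.

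The main technical obstacle is the bookkeeping of the Schur-forced permutation $\a\mapsto\a^\sharp$ and its relation to the permutation of (1): one has to check that these are \emph{inverse} to each other, and it is precisely this inversion that yields the cancellation giving the exponent $q^{mbh'}$ on the nose.
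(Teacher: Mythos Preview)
Your approach---comparing $\aa\tt\aa^{-1}$ and $\tt^{q^{mbh'}}$ summand by summand on $\Sigma\otimes\qlb=\bigoplus_\alpha\V_\alpha$---is exactly the paper's. One remark: the intermediate claim that conjugation by $\aa$ on $\zlb[\SB]$ must be a power map $\tt\mapsto\tt^k$ is not justified (an arbitrary $\zlb$-algebra automorphism of a group algebra need not come from a group automorphism) and, more to the point, is unnecessary: your own final sentence already verifies the identity on \emph{every} summand, so no reduction to a single component is needed. The paper simply computes both endomorphisms on each $\V_\alpha$ directly---$\aa\tt\aa^{-1}$ acts by $\beta(\varsigma)$ while $\tt^{q^{mbh'}}$ acts by $\alpha(\varsigma)^{q^{mbh'}}=\beta(\varsigma)$---and concludes; your observation that the permutation $\alpha\mapsto\alpha^\sharp$ induced by $\aa$ is inverse to $\alpha\mapsto\beta$ is precisely the bookkeeping that makes these scalars coincide.
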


\begin{proof}
La première assertion résulte de la discussion qui précède.
Ensuite,
d'après~le pa\-ra\-graphe \ref{datrappel},
l'élément $\tt$ agit sur le facteur $\V_\a$ par le scalaire
$\a(\varsigma)$,~tan\-dis~que $\aa\tt\aa^{-1}$ agit~sur $\V_\a$
comme $\tt$ agit sur
$\V_\b$, \ie par le scalaire $\b(\varsigma)$.
L'endomorphisme $\aa\tt\aa^{-1}-\tt^{q^{mbh'}}$~agis\-sant par $0$ sur
chaque facteur $\V_\a$, il est nul. 
\end{proof}

\begin{rema}
Ceci ne dépend du choix ni de $\t$, ni du générateur $\varsigma\in\SB$,
ni de $\aa$.
\end{rema}

\subsection{}

Notons $\P$ l'induite compacte de $\P^0$ à $\N$.

\begin{theo}
\label{structureE}
La $\flb$-algèbre $\End(\P)$ est
engendrée par deux générateurs $\nn,\uu$
avec les relations~: 
\begin{equation*}
\nn^{\ell^{\vv}}=1, 
\quad
\uu\nn\uu^{-1} = \nn^{q^{mbh'}}.
\end{equation*} 
\end{theo}

\begin{proof}
Notons $\E$, $\E^0$ les algèbres~d'endomor\-phismes de
$\P$, $\P^0$~res\-pec\-tivement.
Par définition de $\P$, on a un morphisme naturel~:
\begin{equation}
\label{plongeonEell}
\E^0 \to \E
\end{equation}
de $\flb$-algèbres.
Pour tout $h\in\N$ et tout vecteur $v$ dans l'espace de $\P^0$,
on note $[h,v]$~l'élément~de $\P$ de support $\N^0 h$ et prenant
la valeur $v$ en $h$.
Ainsi, si $e\in\E^0$,
son image dans $\E$ est~l'endomor\-phisme
$[h,v]\mapsto[h,e(v)]$.
On en déduit que \eqref{plongeonEell} est injective. 
Identifions dorénavant $\E^0$ à son image dans $\E$.
Notons encore $\aa$ l'image de \eqref{fixA} dans
$\Hom_{\flb\N}(\P^0,\P^{0\w})$ par réduction de $\zlb$ à~$\flb$
et définissons 
un~endomor\-phisme de $\flb$-modules $\uu$ de $\P$ par~:
\begin{equation*}
\uu([h,v]) = [\w^{-1} h,\aa(v)].
\end{equation*}
On vérifie que $\uu$ commute à l'action de $\N$, \ie que $\uu\in\E$.
Soit maintenant $\nn\in\E^0$ comme dans le corollaire \ref{datdlcor}. 
Calculons $\uu\nn\uu^{-1}$.
L'appliquant à la fonction $[h,v]$, on trouve~:
\begin{eqnarray*}
\uu\nn\uu^{-1} ([h,v]) &=& \uu\nn([\w^{-1}h,\aa^{-1}(v)]) \\
&=& \uu([\w^{-1}h,\nn\aa^{-1}(v)]) \\
&=& [h,\aa\nn\aa^{-1}(v)]
\end{eqnarray*}
\ie que, d'après le lemme \ref{ata}, on a 
$\uu\nn\uu^{-1}=\nn^{q^{mbh'}}$.

Il ne reste plus qu'à vérifier que $\E$ est engendré par $\E^0$ et $\uu$.
Par réciprocité de Frobenius et décomposition de Mackey,
on a des isomorphismes de $\flb$-espaces vectoriels~:
\begin{equation*}
\E
\simeq \Hom_{\N^0}(\P^0,\P)
\simeq \bigoplus\limits_{i\in\ZZ} \Hom_{\N^0}(\P^0,\P^{0\w^i})
= \bigoplus\limits_{i\in\ZZ} \aa^i\E_0.
\end{equation*}
Ceci se traduit de la façon suivante~: 
étant donné un endomorphisme $f\in\E$,
les applications~: 
\begin{equation*}
f_i : v \mapsto \aa^{-i} f([1,v])(\w^i),
\quad v\in\P^0,
\quad i\in\ZZ,
\end{equation*}
appartiennent à $\E_0$, 
ne sont non nulles que pour un nombre fini de $i$
(car $f([1,v])\in\ind^\N_{\N^0}(\P^0)$ est à sup\-port compact dans $\N$)
et $f$ est égal à la somme des $\uu^{i}f_i$.
\end{proof}

\begin{rema}
\label{remD2}
\begin{enumerate}
\item 
{Le théorème \ref{structureE} généralise des résultats de Dat~\cite{Datltna}~: 
voir la proposition B.1.2(ii.a) dans le cas où $d=1$, 
et la proposition B.2.1(iv) dans le cas où $m=s=1$.}
\item
{Dans le cas où $m=1$, $s=n$, 
par exemple si $\pi$ est le caractère trivial de $\D^\times$,
la remarque \ref{remD1} montre que $\End(\P)$ est l'algèbre de groupe de
$\D^\times/\U_\D^{(\ell)}$,
ce dernier étant isomorphe au~pro\-duit semi-direct de $\SB$,
la composante $\ell$-primaire de $\kk_\D^\times$, 
par $\ZZ$,
un entier $i\in\ZZ$ agissant sur $\SB$
par l'automorphisme $x\mapsto x^{q^{hi}}$.}
\end{enumerate}
\end{rema}

\subsection{}
\label{algdp}

Prouvons maintenant le théorème \ref{nerodisepia}.
Rappelons que $\P$ est l'induite compacte de $\P^0$ à $\N$.

\begin{lemm}
\label{dishoom}
Notons $\Pi$ l'induite compacte de $\P$ à $\G$.
\begin{enumerate}
\item 
La catégorie $\Rep_{\flb}(\G,\Omega)$ est équivalente à la catégorie des
modules à droite sur $\End(\Pi)$.
\item
Le morphisme naturel de $\flb$-algèbres de $\End(\P)$ dans $\End(\Pi)$ 
est un isomorphisme.
\end{enumerate}
\end{lemm}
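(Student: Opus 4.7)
The plan is to deduce (1) from Proposition \ref{propetitprog} and to establish (2) by a Mackey decomposition combined with the intertwining property of the type $(\N^0,\xi^0)$ on which $\pi$ is built.

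For (1), the transitivity of compact induction gives a canonical isomorphism $\Pi \simeq \ind^\G_{\N^0}(\P^0)$, and the right-hand side is, by Proposition \ref{propetitprog}, a projective generator of $\Rep_{\flb}(\G,\Omega)$. It is of finite type because $\P^0$ is finite-dimensional, so it is a progenerator. Standard Morita theory then yields the equivalence between $\Rep_{\flb}(\G,\Omega)$ and the category of right $\End(\Pi)$-modules.

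For (2), the natural map sends $f \in \End(\P)$ to $\ind^\G_\N(f) \in \End(\Pi)$. By Frobenius reciprocity and the Mackey decomposition of $\Res^\G_\N \Pi$ one obtains
\begin{equation*}
\End_\G(\Pi) \simeq \bigoplus_{g \in \N\backslash\G/\N} \Hom_{\N \cap \N^g}(\P,\P^g),
\end{equation*}
in which the summand indexed by $g \in \N$ is canonically isomorphic to $\End_\N(\P)$ and realizes the image of the natural map. It therefore suffices to show that $\Hom_{\N \cap \N^g}(\P,\P^g)$ vanishes for every $g \notin \N$.

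This last vanishing is the technical heart of the lemma. By Proposition \ref{Prop 3.1.11}, every irreducible $\N$-subquotient of $\P$ is of the form $\xi\nu^j$ with $j \in \ZZ$; in particular, every irreducible $\N^0$-subquotient of $\P^0$ is isomorphic to $\xi^0$ (cf.\ \cite{Vigb} III.2.9). A d\'evissage on the composition series of $\P$ and $\P^g$, looking at the socle of the image of a hypothetical nonzero morphism, reduces the claim to the assertion that for $g \notin \N$ one has $\Hom_{\N^0 \cap \N^{0g}}(\xi^0,\xi^{0g}) = 0$, \ie that $g$ does not intertwine $\xi^0$ in $\G$. By the intertwining computation of \cite{GSZ} Section 1, already invoked in the proof of Lemma \ref{Prop 3.4.1}, the intertwining set of $\xi^0$ in $\G$ equals $\N$, whence the desired contradiction. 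The main obstacle is carrying out the d\'evissage cleanly; if needed, an alternative is to lift $\P^0$ to the finite-rank $\zlb$-projective representation $\tP^0$, decompose $\tP^0 \otimes \qlb$ via \eqref{decPqlb} into cuspidal summands all having intertwining set $\N$, and conclude from the $\zlb$-freeness of the relevant Hom-spaces.
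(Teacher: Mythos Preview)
Your proposal is correct and follows essentially the same route as the paper: part (1) is deduced from Proposition \ref{propetitprog} via Morita theory, and part (2) is obtained from the Mackey decomposition of $\End_\G(\Pi)$ together with the intertwining set of $\xi^0$ computed in \cite{GSZ}. The paper phrases the reduction for (2) slightly differently---it first observes that $\P|_{\N^0}$ is a direct sum of copies of $\P^0$ (since $\N$ normalises $\xi^0$, hence $\P^0$), reducing immediately to $\Hom_{\N^0\cap\N^{0g}}(\P^0,\P^{0g})\neq0$, and then asserts that this forces $g$ to intertwine $\xi^0$---but this last assertion is exactly your d\'evissage on the composition series of $\P^0$ (all of whose irreducible subquotients are $\xi^0$), so the arguments coincide.
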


\begin{proof}
La représentation $\Pi$ étant un progénérateur de $\Rep_{\flb}(\G,\Omega)$
d'après la pro\-position \ref{propetitprog},
il suit par exemple de \cite{Pareigis} 4.11 que le foncteur~:
\begin{equation}
\label{morita}
\V \mapsto \Hom_\G(\Pi,\V)
\end{equation}
est une équivalence de catégories entre $\Rep_\flb(\G,\Omega)$
et la catégorie $\Mod(\End(\Pi))$
des modules~à droi\-te sur $\End(\Pi)$. 
Ensuite, par adjonction, on a un isomorphisme de
$\flb$-espaces vectoriels~:
\begin{equation}
\label{isoFrob}
\End(\Pi) \simeq \bigoplus\limits_{g} \Hom_{\N}(\P,\P^g)
\end{equation}
où $g$ décrit un système de représentants de doubles classes de $\G$
mod $\N$. 
Soit $g\in\G$ tel que~l'es\-pa\-ce $\Hom_{\N\cap\N^g}(\P,\P^g)$ soit non nul.
La restriction de $\P$ à $\N^0$ étant une somme directe de copies~de 
$\P^0$, on trouve que $\Hom_{\N^0\cap\N^{0g}}(\P^0,\P^{0g})$ est non nul, 
donc que $g$ entrelace $\xi ^0$, \ie que~$g\in\N$.
Par conséquent, 
\eqref{isoFrob} donne un~iso\-mor\-phisme de $\flb$-algèbres 
entre $\End(\Pi)$ et $\End(\P)$.
\end{proof}

\begin{rema}
\label{remD3}
Dans le cas particulier où $\pi$ est le caractère trivial de $\D^\times$
(voir les remar\-ques \ref{remD1} et \ref{remD2}),
le foncteur \eqref{morita} est le foncteur des invariants par 
$\U_\D^{(\ell)}$,
qui est distingué dans $\D^\times$.
Par conséquent,
si $\V$ est une représentation dans le bloc principal de $\D^\times$,
\ie dont tous les sous-quotients irréductibles sont des caractères non
ramifiés,
l'espace $\W$ de ses vecteurs invariants par $\U_\D^{(\ell)}$ est stable
par $\D^\times$,
et le quotient $\X=\V/\W$ n'a pas de vecteur $\U_\D^{(\ell)}$-invariant non nul.
Il s'ensuit que $\X$ est nul,
donc que le foncteur \eqref{morita} est le foncteur identité.
\end{rema}

Soit maintenant $\F'$ un corps localement compact non archimédien
dont le corps résiduel soit de cardinal $q'=q^{mb}$,
et soit $\D'$ une $\F'$-algèbre à division centrale de degré réduit $s$
et~d'inva\-riant de Hasse égal à l'entier $h'$ défini par \eqref{defhp},
qui est premier~à~$s$.
Soit $\Pi'$ l'induite compacte à $\D'^\times$ du caractère trivial
de $\U_{\D'}^{(\ell)}$ (remarque \ref{remD1}) 
et soit $\E'$ l'al\-gè\-bre de ses endo\-morphismes.
Comme $q'^s$ est égal à $q^n$,
la valuation $\ell$-adique de $q'^s-1$ est $a$.
D'après la remarque \ref{remD2}, 
l'algèbre 
$\E'$ est engendrée par des~géné\-ra\-teurs $\tt',\uu'$ avec les relations~: 
\begin{equation*}
\tt'^{\ell^{a}}=1,
\quad
\uu' \tt' \uu'^{-1} = \tt'^{q^{mbh'}}.
\end{equation*}
Elle est donc isomorphe à $\E$.
D'après la remarque \ref{remD3}, 
les catégories $\Rep_{\flb}(\D'^\times,\Omega')$ et $\Mod(\E')$ sont identiques.
Le théorème \ref{nerodisepia} s'ensuit.

\begin{rema}
\label{diagcommfinal}
Le diagramme commutatif~:
\begin{equation*}
\xymatrix{\Rep_{\flb}(\G,\Omega) \ar[r] \ar[d] & \Rep_{\flb}(\D'^\times,\Omega')
\ar[d]^{}\\ \Mod(\E) \ar[r] &
\Mod(\E')}
\end{equation*} 
résume la situation~:
les équivalences de catégories verticales sont données par \eqref{morita}
et le foncteur identité $\V'\mapsto\Hom_{\D'^\times}(\Pi',\V')$,
l'équivalence horizontale inférieure est induite par l'isomorphisme
de $\flb$-algèbres entre $\E'$ et $\E$ envoyant $\tt'$ sur $\tt$
et $\uu'$ sur $\uu$, 
et l'équivalence horizontale supérieure
est définie par le fait que le diagramme est commutatif. 
\end{rema}

\begin{rema}
\label{paranormal}
On observe ici un phénomène qui ne se produit pas dans le cas complexe.
{Dans le cas complexe en effet, un bloc supercuspidal de $\GL_m(\D)$
est toujours équivalent 
au bloc de $\F^\times$ contenant le caractère trivial,
\ie qu'on peut choisir $\D'$ égal à $\F'$
(et même $\F'$ égal à $\F$)
dans le théorème \ref{nerodisepia}.
Dans le cas $\ell$-modulaire en revanche,
un bloc de $\rep_\flb(\F^\times)$ contient une seule représentation
irréductible,
tandis que $\rep_\flb(\G,\B(\pi))$
contient tous les $\pi\nu^j$, $j\in\ZZ$.}
\end{rema}

\subsection{}

En guise d'application du théorème \ref{nerodisepia} et du corollaire
\ref{corodisepia}, nous prouvons le résultat suivant.

\begin{prop}
\label{epinards}
Soit $\pi$ une $\flb$-représentation supercuspidale de $\G$.
\begin{enumerate}
\item 
La représentation $\pi$ admet une en\-velop\-pe projective $\Pi$
dans $\rep_\flb(\G)$.
\item
Les sous-quotients irréductibles de $\Pi$ sont tous de la forme 
$\pi\nu^j$ avec $j\in\ZZ$ et,~si $\ell$ ne~di\-vi\-se~pas $q(\pi)-1$,
on peut même supposer que $j=0$.
\end{enumerate} 
\end{prop}

\begin{proof}
Par équivalence, 
il suffit de le prouver dans le cas où $\pi$ est~le~ca\-ractère~trivial de 
$\D^\times$,
ce que nous supposons.
Dans ce cas, $\pi$ contient le type $(\N,\xi)$ où $\xi$ est le caractère
trivial de $\N=\D^\times$.
La première partie de l'énoncé découle donc de la proposition \ref{Propo 3.1.4}.

Soit donc $\P$ l'enveloppe projective du ca\-ractère~trivial de 
$\D^\times$ dans $\rep_\flb(\D^\times)$.
Il nous reste à prouver que
ses sous-quotients irréductibles sont tous de la forme 
$\nu^j$ avec $j\in\ZZ$ et que,~si $\ell$ ne divise pas $q^n-1$,
on peut même supposer que $j=0$.
Soit $\zeta$ un sous-quotient irréductible de $\P$.
D'après le lemme \ref{decdebase5},
il y a une $\qlb$-représentation irréductible $\mu$ de $\D^\times$
dont la réduction mod $\ell$ contienne à la fois $1$ et $\chi$.
Appliquant la proposition \ref{redcusp}
et le corollaire \ref{coromal2q3} à $\mu$,
on trouve le résultat voulu. 
\end{proof}


\section{Le premier espace d'extension dans le cas supercuspidal} 

Soit $\pi$ une $\flb$-représentation supercuspidale de $\G=\GL_m(\D)$. 
Nous déterminons toutes les~re\-présentations irréductibles $\pi'$ de $\G$
telles qu'il existe une extension non scindée de $\pi$ par $\pi'$.

\subsection{}

Dans ce paragraphe,
on cherche les $\flb$-représentations irréductibles de $\D^\times$
ayant une extension non triviale avec le caractère trivial $1$.
D'après le début de la section \ref{sec4},
de telles~repré\-sen\-ta\-tions doi\-vent être des caractères non ramifiés
de $\D^\times$ d'ordre divisant $n$.

Soit donc $\chi$ un~ca\-rac\-tère non ramifié de $\D^\times$
d'ordre divisant $n$.
Si $\chi$ est trivial,
l'espace~d'ex\-ten\-sion $\Ext_{\D^\times}(1,1)$ est toujours non trivial,
car la représentation~:
\begin{equation*}
x \mapsto 
\begin{pmatrix}
1 & \a(x)\\ 0 & 1
\end{pmatrix}
\end{equation*}
où $\a(x)$ est l'image dans $\flb$
de la valuation de la norme réduite de $x$,
est une extension non~tri\-viale~de $1$ par lui-même. 
Supposons donc $\chi$ non trivial.
On pose~:
\begin{equation*}
z=\chi(\w_\D)\in\overline{\mathbb{F}}{}_\ell^\times
\end{equation*}
qui vérifie $z^n=1$ et $z\neq1$.
On cherche à quelle condition l'espace $\Ext^1_{\D^\times}(1,\chi)$
est nul,
\ie à quelle condition toute extension de $1$ par $\chi$ est scindée.

Soit $\M$ une extension de $1$ par $\chi$,
et soit $(e_1,e_2)$ une base de $\M$ sur $\flb$ telle que
$\D^\times$ agisse sur la droite engendrée par $e_1$ par
le caractère $\chi$.
On définit une application
$\a$ de $\D^\times$ dans $\flb$
par~:
\begin{equation*}
x\cdot e_2= e_2 + \a(x)e_1,
\quad
x\in\D^\times.
\end{equation*}
Comme $\M$ est une représentation de $\D^\times$,
cette application $\a$ a la propriété de cocycle~:
\begin{equation}
\label{cocycle}
\a(xy) = \a(x) + \chi(x)\a(y),
\quad
x,y\in\D^\times,
\end{equation}
et l'extension $\M$ est scindée si et seulement s'il y a un
$\l\in\flb$ tel que $\D^\times$ agisse sur $e_2-\l e_1$ par
le caractère trivial,
\ie tel que~:
\begin{equation}
\label{cobord}
\a(x) = \l(\chi(x)-1),
\quad
x\in\D^\times.
\end{equation}
Le caractère $\chi$ étant non ramifié,
\eqref{cocycle} entraîne que 
la restriction de $\a$ à $\Oo^\times_\D$ est un
mor\-phis\-me
de groupes de $\Oo^\times_\D$ dans $\flb$.

\begin{lemm}
\label{pik}
Pour tout $k\in\ZZ$, on a~: 
\begin{equation*} 
\a(\w_{\D}^k) = \a(\w_{\D}^{\phantom{k}}) \cdot \frac {z^k-1} {z-1}. 
\end{equation*}
\end{lemm}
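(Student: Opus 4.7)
The plan is to prove this by a straightforward induction on $k$ using the cocycle relation \eqref{cocycle}, which is the only property of $\a$ available. Since $\chi$ is unramified and $\a$ satisfies \eqref{cocycle}, everything reduces to tracking what happens on powers of $\w_\D$, where $\chi(\w_\D^k)=z^k$.

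First I would settle the base cases. Taking $x=y=1$ in \eqref{cocycle} gives $\a(1)=2\a(1)$, whence $\a(1)=0$, which matches the formula for $k=0$. For $k=1$ the formula is a tautology.

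Next I would run the induction on $k\geqslant 1$. Assuming the formula for $k$, I apply \eqref{cocycle} with $x=\w_\D$ and $y=\w_\D^k$:
\begin{equation*}
\a(\w_\D^{k+1}) = \a(\w_\D) + z\cdot\a(\w_\D)\cdot\frac{z^k-1}{z-1}
= \a(\w_\D)\cdot\frac{(z-1)+z(z^k-1)}{z-1}
= \a(\w_\D)\cdot\frac{z^{k+1}-1}{z-1}.
\end{equation*}
Finally, for negative exponents, applying \eqref{cocycle} to $x=\w_\D^k$, $y=\w_\D^{-k}$ together with $\a(1)=0$ gives $\a(\w_\D^{-k})=-z^{-k}\a(\w_\D^k)$, and a direct simplification turns the right-hand side of the formula for $\w_\D^k$ into the right-hand side of the formula for $\w_\D^{-k}$.

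This is an entirely routine computation; there is no real obstacle. The only subtlety to point out is that the division by $z-1$ is legitimate precisely because we assumed $\chi$ nontrivial, so $z\neq 1$ in $\overline{\mathbb{F}}_\ell^\times$, which is exactly the hypothesis under which the lemma is stated.
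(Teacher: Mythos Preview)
Your proof is correct and follows essentially the same approach as the paper: induction on $k\geqslant 0$ via the cocycle relation \eqref{cocycle}, then a separate treatment for $k<0$. The only cosmetic differences are that the paper applies \eqref{cocycle} with $(x,y)=(\w_\D^k,\w_\D)$ rather than $(\w_\D,\w_\D^k)$, and handles negative $k$ ``de fa\c{c}on analogue'' rather than via your direct use of $\a(1)=0$; neither difference is substantive.
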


\begin{proof}
La formule est vraie pour $k=0$ et $k=1$ et,
si elle est vraie pour un $k\>1$,
l'identité~:
\begin{equation*}
\a(\w_{\D}^{k+1}) = \a(\w_{\D}^k) + z^k\cdot\a(\w_{\D}^{\phantom{k}})
\end{equation*}
montre qu'elle est vraie pour $k+1$.
Elle est donc vraie pour tout $k\>0$.
De façon analogue,~on vé\-rifie qu'elle est vraie pour $k\<0$. 
\end{proof}

\begin{rema}
On en déduit en particulier que
$\a(\w_{\D}^n)=\a(\w_\F^{\phantom{k}})$ est nul.
\end{rema}

Si $\a$ est nulle sur $\Oo^\times_\D$,
l'extension $\M$ est scindée~:
en effet,
le lemme \ref{pik} prouve qu'on a~la~rela\-tion \eqref{cobord} 
avec $\l=\a(\w_{\D})/(z-1)$.

Supposons maintenant que $\a$ ne soit pas nulle sur $\Oo^\times_\D$.
Comme $\ell\neq p$,
elle est nulle sur~le~pro-$p$-sous-groupe $1+\p_\D$~;
elle induit donc un mor\-phis\-me non nul de groupes
de $\kk^\times_\D$ dans $\flb$, donc $\ell$ divise $q^n-1$.
La condition \eqref{cocycle} entraîne pour tout $x\in\Oo_\D^\times$
les égalités~:
\begin{eqnarray*}
\a(\w_{\D}^{\phantom{1}} x \w_{\D}^{-1}) 
&=& \a(\w_\D^{\phantom{1}}) + z\cdot(\a(x) + \a(\w_\D^{-1})) \\
&=& \a(\w_\D^{\phantom{1}}) + z\cdot\a(x) + z\cdot\a(\w_\D^{\phantom{1}})\cdot
(z^{-1}-1)/(z-1) \\
&=& z \cdot \a(x).
\end{eqnarray*}
Or, 
si $h$ désigne l'invariant de Hasse de $\D^\times$
(voir la définition \ref{hasse}), le~conjugué de $x$ par $\w_\D$
est congru à $x^{q^{h}}$ mod $1+\p_\D$.
On en déduit que,
pour qu'il existe une~ex\-ten\-sion~non scin\-dée de $1$ par $\chi$,
il faut que $\ell$ divise $q^n-1$ et que~:
\begin{equation*}
z = q^{h}.
\end{equation*}
Nous allons prouver que la réciproque est vraie.

\begin{lemm}
Supposons que $\ell$ divise $q^n-1$,
et notons $\chi_h$ le $\flb$-caractère non ramifié de $\D^\times$ 
prenant la valeur $q^h$ en une uniformisante.
Alors $\Ext^1_{\D^\times}(1,\chi_h)$ est non nul. 
\end{lemm}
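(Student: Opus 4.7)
Le plan est de construire explicitement un cocycle $\a:\D^\times\to\flb$ pour la repr\'esentation $\chi_h$ (\ie v\'erifiant la relation \eqref{cocycle}) dont la restriction \`a $\Oo_\D^\times$ soit non nulle. Comme tout cobord \eqref{cobord} est automatiquement trivial sur $\Oo_\D^\times$ ($\chi_h$ \'etant non ramifi\'e), l'extension correspondante de $1$ par $\chi_h$ sera alors non scind\'ee, ce qui \'etablira la non-nullit\'e de $\Ext^1_{\D^\times}(1,\chi_h)$.

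Premi\`erement, je construirais la restriction de $\a$ \`a $\Oo_\D^\times$ comme l'inflation d'un morphisme non trivial $\bar\a:\kk_\D^\times\to(\flb,+)$. Comme $\kk_\D^\times$ est cyclique d'ordre $q^d-1=q^n-1$, divisible par $\ell$ par hypoth\`ese, un tel morphisme existe : on peut prendre la composition de la projection de $\kk_\D^\times$ sur sa composante $\ell$-primaire avec une surjection sur $\ZZ/\ell\ZZ$ plong\'e dans $\flb$. Deuxi\`emement, je poserais $\a(\w_\D)=0$, ce qui entra\^ine par le lemme \ref{pik} que $\a(\w_\D^k)=0$ pour tout $k\in\ZZ$, et j'\'etendrais $\a$ \`a $\D^\times$ par $\a(u\w_\D^k)=\bar\a(u\bmod\p_\D)$ pour $u\in\Oo_\D^\times$ et $k\in\ZZ$.

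Le point principal de la preuve est la v\'erification de la condition de cocycle \eqref{cocycle} pour $\a$ ainsi d\'efinie. Le calcul cl\'e est l'\'egalit\'e $\a(\w_\D x\w_\D^{-1})=q^h\a(x)$ pour $x\in\Oo_\D^\times$, qui r\'esulte de ce que la conjugaison par $\w_\D$ induit sur $\kk_\D^\times$ l'automorphisme $x\mapsto x^{q^h}$ (d\'efinition \ref{hasse} de l'invariant de Hasse), combin\'ee au fait imm\'ediat que tout morphisme $\bar\a:\kk_\D^\times\to(\flb,+)$ satisfait $\bar\a(x^{q^h})=q^h\bar\a(x)$ par $\ZZ$-lin\'earit\'e. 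La v\'erification compl\`ete de \eqref{cocycle} sur un produit g\'en\'eral $(u_1\w_\D^{k_1})(u_2\w_\D^{k_2})$ s'ensuit par un calcul direct, en utilisant aussi que $\chi_h$ est triviale sur $\Oo_\D^\times$ et vaut $q^{hk}$ en $\w_\D^k$. Comme $\a$ est non nulle sur $\Oo_\D^\times$ par construction, elle n'est pas un cobord, ce qui ach\`eve la preuve. Il n'y a pas d'obstacle conceptuel ; le seul point demandant du soin est cette v\'erification routini\`ere du cocycle.
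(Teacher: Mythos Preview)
Your proof is correct and follows essentially the same approach as the paper: construct an explicit cocycle $\a$ that is nonzero on $\Oo_\D^\times$, hence not a coboundary. Your choice $\a(\w_\D)=0$ is slightly simpler than the paper's (which takes $\a(\w_\D)=1$ in the nontrivial case via the term $(z^k-1)/(z-1)$), and has the pleasant side effect of treating the cases $\chi_h$ trivial and $\chi_h$ nontrivial uniformly, whereas the paper handles the trivial case separately using the valuation cocycle from the beginning of the paragraph. One cosmetic remark: your appeal to Lemma~\ref{pik} is only heuristic (that lemma is stated for a cocycle already given, and assumes $z\neq1$); since you then define $\a$ directly by the formula $\a(u\w_\D^k)=\bar\a(u\bmod\p_\D)$, the citation is unnecessary and can simply be dropped.
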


\begin{proof}
Si le caractère $\chi_h$ est trivial, \ie si $\ell$ divise $q-1$, 
le résultat découle du fait que $\Ext^1_{\D^\times}(1,1)$ n'est
pas trivial, comme nous l'avons vu au début du paragraphe. 
Suppo\-sons maintenant que $\chi_h$ ne soit pas trivial.
D'après ce qui précède, 
il suffit de prouver l'existence d'une application $\a$ de $\D^\times$ 
dans $\flb$ vérifiant \eqref{cocycle} mais pas \eqref{cobord}.
Nous allons construire une~ap\-pli\-cation $\a$
vérifiant~\eqref{cocycle} et non nulle sur $\Oo_\D^\times$~:
elle ne pourra donc pas vérifier \eqref{cobord}.
Comme $\ell$ divise $q^n-1$,
il y a un morphisme de groupes non nul de $\Oo_\D^\times$
dans $\flb$.
Fixons-en un, qu'on~note $\b$.
Tout $x\in\D^\times$ s'écrit de façon unique
$u\w_\D^k$, où $k\in\ZZ$ est sa valuation et où $u\in\Oo_\D^\times$. 
Posons~:
\begin{equation*}
\a(x) = \b(u) + \frac {z^k-1} {z-1},
\quad\text{avec } z=q^h\neq1.
\end{equation*}
Nous allons prouver que $\a$ vérifie \eqref{cocycle}.
Si $y\in\D^\times$, écrivons-le $v\w_\D^l$ avec $l\in\ZZ$ et
$v\in\Oo_\D^\times$.
On a~:
\begin{equation*}
\a(xy) = \b\left(u\w_\D^{k}v\w_\D^{-k}\right) + \frac {z^{k+l}-1} {z-1} 
\end{equation*}
tandis que~:
\begin{eqnarray*}
\a(x) + \chi_h(x)\a(y) &=& \b(u) + \frac {z^k-1} {z-1} + 
z^k\cdot\left(\b(v)+\frac {z^l-1} {z-1} \right).
\end{eqnarray*}
Pour que $\a$ vérifie l'identité \eqref{cocycle},
il faut et suffit donc que~:
\begin{equation*}
\b\left(\w_\D^{\phantom{1}} v\w_\D^{-1}\right) = z\cdot\b(v)
\end{equation*}
pour tout $v\in\Oo_\D^\times$,
ce qui découle immédiatement de ce que,
comme remarqué plus haut,
le~conju\-gué de $v$ par $\w_\D$
est congru à $v^{q^{h}}$ mod $1+\p_\D$.
\end{proof}

En résumé, on a le résultat suivant.

\begin{prop}
\label{chih}
L'ensemble des $\flb$-caractères $\chi$ de $\D^\times$ tels que 
$\Ext^1_{\D^\times}(1,\chi)$ soit non nul est~:
\begin{enumerate}
\item 
réduit au caractère trivial si $\ell$ ne divise pas $q^n-1$, 
\item
formé du caractère trivial et du caractère non ramifié
$\chi_h:\w_\D\mapsto q^{h}$ si $\ell$ divise $q^n-1$.
\end{enumerate}
\end{prop}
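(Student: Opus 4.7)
The plan is to synthesize the material already established in this section rather than to do anything genuinely new, since the statement is announced as a summary (``En r\'esum\'e'') of the preceding lemmas. I would organise the argument into the two mutually exclusive cases of the conclusion.

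First I would reduce to the case of an unramified character of order dividing $n$. By the argument recalled at the start of Section~8, combined with \cite{SEns} Theorem~10.4 and the central character constraint coming from Lemma~\ref{lemmecc}, any $\chi$ contributing a non-trivial $\Ext^1_{\D^\times}(1,\chi)$ must be such a character. The trivial character is always a solution, thanks to the explicit non-split extension $x\mapsto\begin{pmatrix}1&\a(x)\\0&1\end{pmatrix}$ where $\a(x)$ is the image in $\flb$ of the valuation of the reduced norm; this works regardless of whether $\ell$ divides $q^n-1$, and it disposes of the $\chi=1$ subcase.

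Next I would handle a non-trivial unramified $\chi$ by a cocycle computation. Setting $z=\chi(\w_\D)\in\overline{\mathbb{F}}{}_\ell^\times$ with $z\neq 1$, an extension of $1$ by $\chi$ corresponds to a map $\a:\D^\times\to\flb$ satisfying $\a(xy)=\a(x)+\chi(x)\a(y)$, and it splits iff $\a(x)=\l(\chi(x)-1)$ for some $\l$. I would then split on whether $\a|_{\Oo_\D^\times}$ is zero. If it vanishes, then Lemma~\ref{pik} writes $\a(\w_\D^k)=\a(\w_\D)(z^k-1)/(z-1)$, which is precisely of coboundary form with $\l=\a(\w_\D)/(z-1)$, so the extension splits. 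If it does not vanish, then since $\ell\neq p$ the restriction must come from a non-trivial morphism $\kk_\D^\times\to\flb$, forcing $\ell\mid q^n-1$; and using the cocycle relation applied to the conjugation $\w_\D^{\phantom{1}}x\w_\D^{-1}\equiv x^{q^h}\pmod{1+\p_\D}$ (where $h$ is the Hasse invariant of $\D$, see Definition~\ref{hasse}), one obtains $z=q^h$, pinning down $\chi=\chi_h$.

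Finally I would establish the converse existence, which is the only place where genuine construction is needed. When $\ell\mid q^n-1$ and $\chi=\chi_h$, I would explicitly build a cocycle $\a$ by fixing a non-trivial morphism $\b:\Oo_\D^\times\to\flb$ (available since $\ell\mid q^n-1$), writing each $x\in\D^\times$ uniquely as $u\w_\D^k$ with $u\in\Oo_\D^\times$, and setting $\a(u\w_\D^k)=\b(u)+(z^k-1)/(z-1)$ with $z=q^h$. The cocycle identity reduces, after a direct check, to $\b(\w_\D^{\phantom{1}}v\w_\D^{-1})=z\,\b(v)$ for $v\in\Oo_\D^\times$, which holds automatically from the congruence $\w_\D^{\phantom{1}}v\w_\D^{-1}\equiv v^{q^h}\pmod{1+\p_\D}$. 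The only subtlety in the whole argument, and hence the main obstacle, is checking that this explicit $\a$ satisfies the cocycle relation uniformly on $\D^\times$ and not merely on the subgroup $\Oo_\D^\times$; once that verification is in hand, the proposition follows by assembling the two cases.
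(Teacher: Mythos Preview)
Your proposal is correct and follows essentially the same approach as the paper: reduction to unramified characters of order dividing $n$, the explicit non-split self-extension for $\chi=1$, the cocycle analysis splitting on whether $\a|_{\Oo_\D^\times}$ vanishes (using Lemma~\ref{pik} and the conjugation relation $\w_\D^{\phantom{1}}x\w_\D^{-1}\equiv x^{q^h}$), and finally the explicit construction of a non-coboundary cocycle via a non-trivial $\b:\Oo_\D^\times\to\flb$ when $\ell\mid q^n-1$. The only point you might streamline is that the paper also notes separately the subcase $\chi_h=1$ (i.e.\ $\ell\mid q-1$), but this is already covered by your treatment of the trivial character.
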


\subsection{}

Dans ce paragraphe,
nous généralisons la proposition \ref{chih}
au cas d'une représentation~super\-cus\-pi\-dale quelconque $\pi$
de $\G=\GL_m(\D)$.
Nous allons utiliser la description de $\pi$ par la théorie des types simples
donnée au paragraphe \ref{flapflip},
dont nous utilisons les notations.

Notons $h(\pi)$ l'invariant de Hasse de l'algèbre à division $\C$
apparaissant dans \eqref{melon}.
Si $h$ est l'invariant de Hasse de $\D$~et 
si $g$ est le degré de $\E$ sur $\F$ (avec les notations du paragraphe
\ref{flapflip}),
alors $h(\pi)$ est le reste dans~la division euclidienne de
$gh/(g,d)$ par $c=d/(g,d)$.
C'est un entier premier à $c$.

\begin{prop}
\label{pih}
Soit $\pi$ une $\flb$-représentation supercuspidale de $\G$. 
L'ensemble des $\flb$-re\-pré\-sentations supercuspidales $\pi'$ de $\G$ telles
que $\Ext^1_{\G}(\pi,\pi')$ soit non nul est~:
\begin{enumerate}
\item 
réduit à $\pi$ si $\ell$ ne divise pas $q(\pi)-1$, 
\item
formé de $\pi$ et de la représentation 
$\pi\nu^{-h(\pi)}$ si $\ell$ divise $q(\pi)-1$.
\end{enumerate}
\end{prop}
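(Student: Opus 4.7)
The plan is to combine Proposition~\ref{Prop 4.2.1}, which already constrains the possible $\pi'$, with the two categorical reductions developed in Sections~\ref{sec5} and~\ref{sec7}, and then to invoke the computation for $\D^\times$ carried out in the previous subsection (Proposition~\ref{chih}).

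First, Proposition~\ref{Prop 4.2.1} tells us that any supercuspidal $\pi'$ with $\Ext^1_{\G}(\pi,\pi')\neq 0$ is isomorphic to $\pi\nu^j$ for some $j\in\ZZ$, and that $j=0$ when $\ell$ does not divide $q(\pi)-1$. The question thus reduces to identifying those integers $j$ (modulo the stabiliser of $\pi$ under unramified twists) for which $\Ext^1_{\G}(\pi,\pi\nu^j)$ is nonzero.

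Next, I apply Chinello's equivalence $\GG:\Rep_\flb(\G,\Omega)\to\Rep_\flb(\G_0,\Omega_0)$ from Proposition~\ref{urgence}, which sends $\pi$ to a level-zero supercuspidal $\pi_0$ of $\G_0=\GL_r(\C)$. Combined with Lemma~\ref{adieulescons}, which translates the unramified twist $\nu^j$ on $\G$ into the analogous unramified twist $\nu_0^j$ on $\G_0$, and with Lemma~\ref{quicklemma} giving $q(\pi_0)=q(\pi)$, this produces an isomorphism
\[
\Ext^1_{\G}(\pi,\pi\nu^j)\;\simeq\;\Ext^1_{\G_0}(\pi_0,\pi_0\nu_0^j),
\]
and reduces the problem to the level-zero setting on $\G_0$. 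I then invoke the equivalence of Theorem~\ref{nerodisepia}, whose construction in Section~\ref{sec7} produces an auxiliary division algebra $\D'$ with Hasse invariant equal to the integer $h'$ of equation~\eqref{defhp}; together with Corollary~\ref{corodisepia}, this transports the question to the principal block of $\D'^\times$. Under this chain of equivalences $\pi_0$ corresponds to the trivial character of $\D'^\times$, and by chasing through the commutative diagram of Remark~\ref{diagcommfinal} the twist $\pi_0\nu_0^j$ corresponds to an explicit unramified character of $\D'^\times$. Proposition~\ref{chih} applied to $\D'^\times$ then supplies the dichotomy: either $\{1\}$, when $\ell$ does not divide $q(\pi)-1$, or $\{1,\chi_{h'}\}$ otherwise.

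The main obstacle will be the bookkeeping: one must verify that the specific unramified character of $\D'^\times$ corresponding, via the chain of equivalences, to $\pi_0\nu_0^{-h(\pi)}$ on $\G_0$ is precisely the character $\chi_{h'}$ of Proposition~\ref{chih}. Equivalently, the Hasse invariant $h(\pi)$ of $\C$ (computed at the start of this section by the Euclidean division of $gh/(g,d)$ by $c$) must match the integer $h'\in\{1,\dots,s\}$ defined in~\eqref{defhp}, once the latter is applied with the data attached to $\C$ rather than $\D$. This boils down to comparing the action of conjugation by $\w_\D$ on $\kk_\C$ with the action of a chosen uniformiser of $\D'$ on its residue field; once the two actions are identified, Proposition~\ref{chih} applied to $\D'$ yields exactly the statement of Proposition~\ref{pih}.
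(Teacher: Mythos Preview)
Your approach is essentially the paper's: constrain $\pi'$ to $\pi\nu^j$ via Proposition~\ref{Prop 4.2.1}, reduce to level zero via Chinello's functor $\GG$ using Lemmas~\ref{quicklemma} and~\ref{adieulescons}, pass to $\D'^\times$ via the equivalence of Section~\ref{sec7}, and finish with Proposition~\ref{chih}. The paper carries out exactly these steps in this order.

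One imprecision in your final paragraph deserves correction. The bookkeeping is \emph{not} a matter of showing that $h(\pi)$ coincides with the integer $h'$ of~\eqref{defhp}; these need not be equal as integers. What the paper actually verifies (in the level-zero case, after which the general case follows by $\GG$) is that the equivalence $\GGG$ of Remark~\ref{diagcommfinal} satisfies $\GGG(\pi\nu^j)\simeq\GGG(\pi)\nu'^j$ for every $j$. This is done by tracking the twist through the isomorphism of endomorphism algebras $\E\simeq\E'$ from Theorem~\ref{structureE}: one identifies how $\nu^j$ acts on the generator $\uu$ and checks the key identity $\nu(\w)=q'^{-1}=\nu'(\w')$, whence $\mu'=\nu'^j$. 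Once that compatibility is in hand, Proposition~\ref{chih} for $\D'^\times$ (whose Hasse invariant is $h'$, and where $\chi_{h'}=\nu'^{-h'}$) gives the answer. Your suggestion to ``compare conjugation actions on residue fields'' is heuristically in the right direction but is not how the paper closes the argument; if you try to literally match $h(\pi)$ with $h'$ you will run into trouble.
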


\begin{proof}
Dans le cas où $\ell$ ne divise pas $q(\pi)-1$, 
le résultat découle de la propo\-sition \ref{ext1cuspniveaunonnul}.
Supposons désormais que $\ell$ divise $q(\pi)-1$.
Selon la proposition \ref{ext1cuspniveaunonnul}, 
on peut supposer que $\pi'$ est isomorphe à $\pi\nu^j$ pour un entier
$j\in\ZZ$.

Supposons d'abord que le résultat soit vrai pour les représentations 
supercuspidales de niveau $0$~:
on va en déduire le~résultat dans le cas de niveau non nul
en raisonnant comme dans~la preuve de la proposition \ref{ext1cuspniveaunonnul}. 
Appliquons le foncteur $\GG$ introduit au paragraphe \ref{foncGG},
envoyant $\pi$ sur la représentation supercuspidale $\pi_0$ de niveau $0$
de $\G_0$.
On trouve que~:
\begin{equation*}
\Ext^1_{\G}(\pi,\pi') \neq \left \{ 0 \right \}
\quad\Leftrightarrow\quad
\Ext^1_{\G_0}(\GG(\pi),\GG(\pi')) \neq \left \{ 0 \right \}.
\end{equation*}
D'après le cas de niveau $0$,
et comme $q(\pi_0)=q(\pi)$ d'après le lemme \ref{quicklemma}, 
la représentation $\GG(\pi')$ est isomorphe à $\pi_0$ ou à
$\pi_0^{\phantom{1}}\nu_0^{-h(\pi)}$, 
où $\nu_0$ est le caractère non ramifié
``valeur absolue de la norme réduite'' de $\G_0$.
Par ailleurs,
d'après le lemme \ref{adieulescons},
et comme la restriction de $\nu$ à $\G_0$ est égale à $\nu_0$,
la représentation $\GG(\pi\nu^{-h(\pi)})$ est isomorphe à
$\pi_0^{\phantom{1}}\nu_0^{-h(\pi)}$.
Le foncteur $\GG$ étant une équivalence de catégories, on trouve le résultat 
annoncé. 

Supposons maintenant que $\pi$ soit de niveau $0$.
On a donc $q(\pi)=q^n$ et $h(\pi)=h$.
Reprenons les notations du paragraphe \ref{algdp},
et notamment celles de la remarque \ref{diagcommfinal}.
On a une équivalence de catégories entre
$\Rep_\flb(\G,\Omega)$ et $\Rep_\flb(\D'^\times,\Omega')$,
que l'on note $\GGG$,
et un isomorphisme d'algèbres entre $\E$ et $\E'$.
Nous allons utiliser le même argument que ci-dessus~:
il suffit pour cela de~vé\-rifier que 
$\GGG(\pi\nu^j)$ est isomorphe à $\GGG(\pi)\nu'^j$,
où $\nu'$ est la caractère 
``valeur absolue~de la norme réduite'' 
de $\D'^\times$.
Pour cela,
nous allons suivre le raisonnement de la preuve du lemme \ref{adieulescons}.

D'abord,
par analogie avec le lemme \ref{Lemme 4.1.4},
on a le résultat suivant~:
si l'on note $\M$ le $\E$-module à droite $\Hom_\G(\Pi,\pi)$, 
alors $\Hom_\G(\Pi,\pi\nu^j)$ est le $\E$-module obtenu en tordant 
$\M$ par~le ca\-ractère de $\E$ défini par
$\tt\mapsto1$ et $\uu\mapsto\nu^j(\w)$.

Ensuite, 
il existe un unique caractère non ramifié $\mu'$ de~$\D'^\times$ tel que 
$\GGG(\pi\nu^j)$ soit égal à $\GGG(\pi)\mu'$.
Considéré comme un module sur $\E'$,
il est 
obtenu en tordant $\GGG(\pi)$
par~le ca\-ractère de $\E'$ 
défini par $\tt'\mapsto1$ et $\uu'\mapsto\mu'(\w')$,
où $\w'$ est une uniformisante de $\D'$,
ce qui entraîne $\mu'(\w') = \nu^j(\w)$. 
Or $\nu(\w)$ est égal à $q^{-mb} = q'^{-1}$. 
On en déduit que $\mu'$ est égal à $\nu'^j$. 
\end{proof}


\appendix

\section{}
\label{App}

Comme nous l'avons mentionné dans l'introduction,
la preuve de \cite{SEns} Proposition 1.3 est~in\-correcte.
(Dans la preuve de \cite{SEns} Lemme 1.4,
un sous-quotient $\l$ de $\xi$ peut être~cus\-pi\-dal,~ce~qui
em\-pê\-che d'appliquer l'hypothèse de récurrence.)
La preuve du théorème de décomposition \cite{SEns} Theorem~10.4
reposant sur cette proposition,
nous devons expliquer pourquoi ce théorème reste valable. 
Plus précisément, 
dans l'article \cite{SEns},
le seul résultat de la section 1 utilisé dans la preuve du théorème 10.4 est 
le~co\-rollaire 1.15.
Nous allons prouver que ce corollaire reste valable.

Dans tout cet appendice, 
$\GB$ est un groupe linéaire général sur un corps fini de caractéristique $p$
et $\R$ est un corps algébriquement clos de caractéristique différente de $p$. 

\begin{prop}
\label{sauvetagefini}
Soient $\pi$ et $\pi'$ des représentations irréductibles de $\GB$.
On suppose qu'il~y a un entier $i\>0$ tel que $\Ext_\GB^i(\pi',\pi)$
soit non nul.
Alors $\pi$ et $\pi'$ ont même support supercuspidal. 
\end{prop}

\begin{proof}
Soit $\P$ l'enveloppe projective de $\pi$.
Nous allons prouver que tout sous-quo\-tient irré\-duc\-tible
$\a$ de $\P$ a le même support
supercuspidal que $\pi$.
Si $\Q$ est l'enveloppe projective de $\a$,
on a un morphisme non nul de $\Q$ dans $\P$. 
En relevant $\P$, $\Q$ à $\zlb$ et en étendant~les~scalai\-res~à~$\qlb$,
on en déduit
(comme au paragraphe \ref{par46} ci-dessus)
qu'il y a une $\qlb$-représentation~ir\-ré\-ductible $\d$ dont la
réduction~mod $\ell$ contienne $\pi$ et $\a$.
Les représentations $\pi$ et $\a$ ont donc le même support supercuspidal,
qui est le support supercuspidal mod $\ell$ de $\d$. 

Prouvons maintenant que $\pi$ et $\pi'$ ont même~sup\-port supercuspidal. 
Le cas où $i=0$ est~immé\-diat.
Supposons donc que $i\>1$.
Soit $\Q$ l'enveloppe projective de $\pi'$ et soit
$\Q_1$ la sous-représen\-tation propre maximale de $\Q$.
Appliquant le foncteur $\Hom_{\GB}(-,\pi)$,
et $\Q$ étant projectif, on~a~:
\begin{equation*}
\Ext_\GB^i(\pi',\pi) \simeq \Ext_\GB^{i-1}(\Q_1,\pi).
\end{equation*}
Il existe donc un sous-quotient irréductible $\a$ de $\Q_1$ tel que
$\Ext_\GB^{i-1}(\a,\pi)$ soit non nul.
Par récur\-ren\-ce sur $i$,
les représentations $\pi$ et $\a$ ont le même support supercuspidal.
Par ailleurs, $\a$ a le même support supercuspidal que $\pi'$ car il
est un sous-quotient de $\Q$.
\end{proof}

On en déduit (par exemple en utilisant le lemme \ref{vanessa})
que les sous-quotients irréductibles d'une
représentation indécomposable de longueur finie de $\GB$
ont tous le même support supercuspidal.
Par conséquent,
les conclusions de \cite{SEns} Proposition 1.7, Lemme 1.11
sont vraies pour le groupe $\GB$. 
Il~s'ensuit que les théorèmes de décomposition
\cite{SEns} Théorème 1.12, Corollaire 1.15
sont vrais pour $\GB$.

\providecommand{\bysame}{\leavevmode ---\ }

\end{document}